\numberwithin{equation}{section}
\numberwithin{figure}{section}
\theoremstyle{definition}
\newtheorem{thm}{Theorem}[section]
\newtheorem{prp}[thm]{Proposition}
\newtheorem{lem}[thm]{Lemma}
\newtheorem{cor}[thm]{Corollary}
\newtheorem{dfn}[thm]{Definition}
\newtheorem{rmk}[thm]{Remark}
\newtheorem{ex}[thm]{Example}
\newtheorem{fct}[thm]{Fact}
\newtheorem*{thm*}{Theorem}
\newtheorem*{prp*}{Propostion}
\newtheorem*{lem*}{Lemma}
\newtheorem*{dfn*}{Definition}
\newtheorem*{nt*}{Notattion}
\newtheorem*{fct*}{Fact}
\newtheorem*{rmk*}{Remark}
\newtheorem*{ex*}{Example}
\newtheorem{thma}{Theorem}
\newcommand{\ol}{\overline}
\newcommand{\ul}{\underline}
\newcommand{\wt}{\widetilde}
\newcommand{\xr}[1]{\xrightarrow{\, #1 \, }}
\newcommand{\xl}[1]{\xleftarrow{\, #1 \, }}
\newcommand{\inj}{\hookrightarrow}
\newcommand{\surj}{\twoheadrightarrow}
\newcommand{\simto}{\xr{\sim}}
\newcommand{\simot}{\xl{\sim}}
\newcommand{\bbC}{\mathbb{C}}
\newcommand{\bbD}{\mathbb{D}}
\newcommand{\bbZ}{\mathbb{Z}}
\newcommand{\bbk}{\Bbbk}
\DeclareMathOperator{\cha}{char}
\DeclareMathOperator{\lcm}{lcm}
\newcommand{\abs}[1]{\left| #1 \right|}
\DeclareMathOperator{\Mod}{Mod}
\DeclareMathOperator{\catmod}{mod}
\DeclareMathOperator{\Gr}{GrMod}
\DeclareMathOperator{\CM}{CM}
\newcommand{\catA}{\mathcal{A}}
\newcommand{\catB}{\mathcal{B}}
\newcommand{\catC}{\mathcal{C}}
\newcommand{\catD}{\mathcal{D}}
\newcommand{\catI}{\mathcal{I}}
\newcommand{\catJ}{\mathcal{J}}
\newcommand{\catP}{\mathcal{P}}
\newcommand{\catT}{\mathcal{T}}
\newcommand{\catU}{\mathcal{U}}
\newcommand{\smod}{\mathop{\ul{\catmod}}\nolimits}
\newcommand{\sCM}{\ul{\CM}}
\DeclareMathOperator{\id}{id}
\DeclareMathOperator{\Id}{Id}
\DeclareMathOperator{\Ob}{Ob}
\DeclareMathOperator{\Hom}{Hom}
\DeclareMathOperator{\End}{End}
\DeclareMathOperator{\Ext}{Ext}
\DeclareMathOperator{\Ima}{Im}
\DeclareMathOperator{\Ker}{Ker}
\DeclareMathOperator{\Cone}{Cone}
\newcommand{\po}{\arrow[rd,"{\mathrm{PO}}",phantom]}
\newcommand{\pb}{\arrow[rd,"{\mathrm{PB}}",phantom]}
\DeclareMathOperator{\Proj}{Proj}
\DeclareMathOperator{\proj}{proj}
\DeclareMathOperator{\add}{add}
\DeclareMathOperator{\tria}{tria}
\DeclareMathOperator{\thick}{thick}
\DeclareMathOperator{\rad}{rad}
\newcommand{\sHom}{\ul{\Hom}}
\newcommand{\op}{\text{op}}
\newcommand{\pd}{\mathop{\mathrm{proj.dim}}\nolimits}
\newcommand{\gd}{\mathop{\mathrm{gl.dim}}\nolimits}
\newcommand{\dg}{\text{dg}}
\newcommand{\dgHom}{\mathcal{H}\text{om}\,}
\DeclareMathOperator{\perf}{perf}
\newcommand{\pmm}{\mathfrak{m}}
\newcommand{\pnn}{\mathfrak{n}}
\newcommand{\ppp}{\mathfrak{p}}
\newcommand{\LL}{\Lambda}
\newcommand{\Sg}{\Sigma}
\newcommand{\bu}{\bullet}
\newcommand{\arr}[1]{\arrow[{#1}]}
\newenvironment{enur}{\begin{enumerate}[nosep, label=(\roman*)]}{\end{enumerate}}
\newenvironment{enua}{\begin{enumerate}[nosep, label=(\arabic*)]}{\end{enumerate}}
\newcommand{\colim@}[2]{%
	\vtop{\m@th\ialign{##\cr
			\hfil$#1\operator@font colim$\hfil\cr
			\noalign{\nointerlineskip\kern1.5\ex@}#2\cr
			\noalign{\nointerlineskip\kern-\ex@}\cr}}%
}
\newcommand{\colim}{%
	\mathop{\mathpalette\colim@{\rightarrowfill@\textstyle}}\nmlimits@
}
\begin{document}

\title{Tilting objects in periodic triangulated categories}
\author{Shunya SAITO}
\date{June 6, 2021}
\address{Graduate School of Mathematics, Nagoya University, Chikusa-ku, Nagoya. 464-8602, Japan}
\email{m19018i@math.nagoya-u.ac.jp}
\maketitle

\begin{abstract}
A triangulated category $\catT$ whose suspension 
functor $\Sg$ satisfies $\Sg^m \simeq \Id_{\catT}$ as additive functors
is called an $m$-periodic triangulated category.
Such a category does not have a tilting object by the periodicity.
In this paper, we introduce the notion of an $m$-periodic tilting object
in an $m$-periodic triangulated category,
which is a periodic analogue of a tilting object in a triangulated category,
and prove that an $m$-periodic triangulated category 
having an $m$-periodic tilting object is triangulated equivalent to
the $m$-periodic derived category of an algebra
under some homological assumptions.
As an application, we construct a triangulated equivalence
between the stable category of a self-injective algebra 
and the $m$-periodic derived category of a hereditary algebra.
\end{abstract}

\tableofcontents

\section{Introduction}\label{s:Intro}
\subsection{Background}

\emph{Tilting theory} is a certain generalization of Morita theory,
describing a way to relate a triangulated category
with the derived category of an algebra.
It plays an important role in representation theory
and gives connections between various areas of mathematics.
For example, it appears in a classification of representations of Dynkin quivers,
Cohen-Macaulay representations of Gorenstein singularities and 
commutative/non-commutative algebraic geometry.
See \cite{Tilt07} for a broad review of the theory.
The aim of this paper is to give a starting point of 
\emph{tilting theory for periodic triangulated categories}.

Let $m$ be a positive integer.
A triangulated category $\catT$ whose suspension functor $\Sg$ satisfies 
$\Sg^m \simeq \Id_{\catT}$ as additive functors
is called an \emph{$m$-periodic triangulated category}.
A basic example of a periodic triangulated category
is an \emph{$m$-periodic derived category},
the localization of the category of $m$-periodic complexes 
with respect to quasi-isomorphisms.
See \S \ref{s:m-cpx} for the precise definition.
It appears in the categorification of Lie algebras and quantum groups 
via Ringel-Hall algebras:
Peng and Xiao \cite{PX97,PX00} used two-periodic complexes to 
construct full semisimple Lie algebras.
Bridgeland \cite{Br13} investigated the two-periodic derived category 
of a hereditary abelian category and constructed full quantum groups.
Motivated by these studies,
several authors analyzed the structure of $m$-periodic derived categories.
Peng and Xiao proved in \cite{PX97} that
the $m$-periodic derived category $D_m(\LL)$ of a hereditary algebra $\LL$
is triangulated equivalent to the orbit category $D^b(\LL)/\Sg^m$ 
of the bounded derived category $D^b(\LL)$.
In \cite{Zhao14}, Zhao generalized this result to an algebra $\LL$ 
of finite global dimension, and proved that
the $m$-periodic derived category $D_m(\LL)$ is triangulated equivalent to 
the triangulated hull of the orbit category $D^b(\LL)/\Sg^m$.
In particular, the $m$-periodic derived category is a derived invariant of algebras.
See also \cite{Fu12,Go13,Go18}.
We will give a relevant explanation in \S \ref{s:m-cpx}.

Other examples of periodic triangulated categories are given by 
the stable category of maximal Cohen-Macaulay modules over a hypersurface singularity,
and the stable category of finitely generated modules over 
a self-injective algebra of finite representation type.
We will treat some of these examples in \S \ref{s:ex}.

In the ordinary tilting theory, we have the \emph{tilting theorem},
which states that a triangulated category with a tilting object 
is triangulated equivalent to the perfect derived category of
the endomorphism ring of the tilting object.
See Fact \ref{fct:Keller} for the precise statement.
The motivation of this paper is to give
a \emph{periodic analogue of this tilting theorem}.
The main Theorem \ref{thm:A} gives a sufficient condition 
under which a periodic triangulated category
is triangulated equivalent to the periodic derived category of an algebra.

\subsection{Main results}
Let $\catT$ be a triangulated category 
with the suspension functor $\Sg$ over a field $\bbk$.

We first recall the tilting objects and the tilting theorem 
in the standard sense.
As usual, a \emph{thick subcategory} of a triangulated category means 
a full subcategory closed under taking cones, shifts and direct summands.
An object of a triangulated category is a \emph{thick generator}
if the smallest thick subcategory containing $T$ is equal to $\catT$.
\begin{dfn*}
An object $T$ of $\catT$ is called \emph{tilting}
if it is a thick generator of $\catT$ with $\catT(T,\Sg^iT)=0$ for any $i\ne0$.
\end{dfn*}

A triangulated category is called algebraic
if it is triangulated equivalent to 
the stable category of a Frobenius exact category.
\begin{fct}[{The tilting theorem \cite{Kel94}}]\label{fct:Keller}
Let $\catT$ be an idempotent complete algebraic triangulated category.
If $\catT$ has a tilting object $T$,
then there exists a triangulated equivalence
\[
\catT \simto K^b(\proj \LL),
\]
where $\LL :=\End_{\catT}(T)$ is the endomorphism ring of $T$,
and $K^b(\proj \LL)$ is the homotopy category 
of bounded complexes of projective $\LL$-modules.
\end{fct}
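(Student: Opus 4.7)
The plan is to reduce the claim to Keller's DG enhancement machinery. Since $\catT$ is algebraic, I first fix a Frobenius exact category $\catE$ with $\underline{\catE}\simeq\catT$ and lift the tilting object $T$ to an object of $\catE$, still written $T$. This setup carries a natural DG enhancement of $\catT$ (for instance the DG quotient of the DG category of bounded complexes in $\catE$ by the acyclics, or the standard pretriangulated hull of projective resolutions), so that morphism spaces in $\catT$ are recovered as $H^{0}$ of concrete complexes of $\bbk$-vector spaces.

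Inside this enhancement I form the endomorphism DG algebra $A := \RHom(T,T)$. The tilting hypothesis $\catT(T,\Sg^{i}T)=0$ for $i\ne 0$ translates into $H^{i}(A)=0$ for $i\ne 0$, so $A$ is formal and quasi-isomorphic to $\LL := \End_{\catT}(T)$, viewed as a DG algebra concentrated in degree zero. Consequently $\perf(A)\simeq\perf(\LL)\simeq K^{b}(\proj\LL)$, the last identification being the classical one for an ordinary algebra.

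The equivalence itself is then produced by the DG Yoneda functor $F := \RHom(T,-) \colon \catT \longto D(A)$. It sends $T$ to $A$, and a standard dévissage on distinguished triangles shows that the restriction of $F$ to the triangulated subcategory generated by $T$ is fully faithful and lands in $\thick(A)=\perf(A)$; idempotent completeness of $\catT$ is then what ensures that the essential image is closed under direct summands and hence equals all of $\perf(A)$. Since $T$ is a thick generator, this subcategory is all of $\catT$, yielding $\catT\simto K^{b}(\proj\LL)$.

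The main obstacle is to set up $F$ at the DG (rather than purely triangulated) level, because only there does the Ext vanishing in $\catT$ become a genuine \emph{quasi-isomorphism} $A\simeq\LL$ of DG algebras; this is precisely what Keller's bimodule enhancement accomplishes. Without this step one cannot bridge $\End_{\catT}(T)$ with the DG algebra governing $D(\LL)$, and one cannot convert the abstract tilting data into the concrete equivalence with $K^{b}(\proj\LL)$.
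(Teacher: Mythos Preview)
The paper does not give its own proof of this statement: it is recorded as a Fact with a bare citation to \cite{Kel94}, so there is nothing in the paper to compare your argument against line by line. That said, your outline is correct and is essentially the standard Keller argument; moreover it matches exactly the machinery the paper invokes elsewhere. In particular, your steps ``form $A=\RHom(T,T)$, observe $H^{*}(A)\cong\bigoplus_i\catT(T,\Sg^iT)$, and pass to $\perf(A)$'' are precisely the content of the paper's Fact~\ref{fct:morita}, and your use of the quasi-isomorphism $A\simeq\LL$ to transport $\perf(A)\simeq\perf(\LL)$ is the paper's Fact~\ref{fct:DG qis}. The paper's proof of its main Theorem~\ref{thm:Main} in \S\ref{s:main} follows the same template, only with $H^{*}(A)\cong\LL[t,t^{-1}]$ in place of $H^{*}(A)\cong\LL$ and an appeal to intrinsic formality (Corollary~\ref{cor:formal}) in place of the elementary truncation you use.

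One small point worth making explicit in your write-up: the assertion that a DG algebra $A$ with $H^{i}(A)=0$ for $i\ne0$ is formal is true but not tautological. The clean way to say it is via the zig-zag $A \leftarrow \tau_{\le 0}A \to H^{0}(A)$, where $\tau_{\le 0}A$ is the sub-DG-algebra with $(\tau_{\le 0}A)^{i}=A^{i}$ for $i<0$, $(\tau_{\le 0}A)^{0}=Z^{0}(A)$, and zero in positive degrees; both arrows are DG algebra maps and quasi-isomorphisms under the vanishing hypothesis. You also do not need to invoke idempotent completeness separately to close the essential image under summands: once $F$ restricts to a triangulated equivalence $\thick_{\catT}(T)\simeq\perf(A)$ and $T$ is a thick generator, you already have $\thick_{\catT}(T)=\catT$.
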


An $m$-periodic triangulated category does not have a tilting object
by the periodicity,
and cannot be triangulated equivalent to 
the derived category of an algebra.
Instead, we will introduce an \emph{$m$-periodic tilting object}
which is a periodic analogue of a tilting object, 
and prove a periodic analogue of the tilting theorem.

\begin{dfn}\label{dfn:m-tilt}
Let $\catT$ be an $m$-periodic triangulated category.
An object $T$ of $\catT$ is an \emph{$m$-periodic tilting} object
if it is a thick generator of $\catT$ with $\catT(T,\Sg^iT)=0$ for any $i\not\in m\bbZ$.
\end{dfn}

\begin{thma}[{Theorem \ref{thm:Main}}]\label{thm:A}
Let $\catT$ be an idempotent complete algebraic 
$m$-periodic triangulated category having an $m$-periodic tilting object $T$, 
and $\LL :=\End_{\catT}(T)$ the endomorphism algebra of $T$ in $\catT$.
Suppose $\LL$ is a finite dimensional $\bbk$-algebra and satisfies
\begin{align}\label{eq:thmA:cond}
 \pd_{\LL^{\op}\otimes \LL}\LL \le m.
\end{align}
Then there exists a triangulated equivalence
\[
\catT\simeq D_m\left( \Lambda \right),
\]
where 
$D_m\left( \Lambda \right)$ is the $m$-periodic derived category of $\LL$
(\S \ref{s:m-cpx}).
\end{thma}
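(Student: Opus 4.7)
The plan is to realize $\catT$ as the perfect derived category of a DG $\bbk$-algebra by applying a DG enhancement of Fact~\ref{fct:Keller}, to identify that DG algebra (up to quasi-isomorphism) with a formal model built from $\LL$, and then to match this model with $D_m(\LL)$.

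First, since $\catT$ is algebraic, idempotent complete, and generated as a thick subcategory by $T$, the DG refinement of Keller's tilting theorem applies without any Ext-vanishing hypothesis on $T$: there is a DG $\bbk$-algebra $\catA$, quasi-isomorphic to the DG endomorphism algebra of a DG lift of $T$, together with a triangulated equivalence $\catT \simeq \perf(\catA)$ sending $T$ to $\catA$. The cohomology is $H^i(\catA) = \catT(T, \Sg^i T)$, which by the $m$-periodic tilting hypothesis equals $\LL$ for $i \in m\bbZ$ and vanishes otherwise. The Yoneda product, combined with a chosen isomorphism $T \simto \Sg^m T$ coming from the periodic structure, identifies $H^{\ast}(\catA)$ as a graded $\bbk$-algebra with $\LL[x^{\pm 1}]$, where $x$ is central of degree~$m$.

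Second, I would appeal to the preparatory section~\S\ref{s:m-cpx} for an identification of $D_m(\LL)$ with $\perf\bigl(\LL[x^{\pm 1}]\bigr)$, where $\LL[x^{\pm 1}] = \LL\otimes_{\bbk}\bbk[x^{\pm 1}]$ is viewed as a DG algebra with $\deg x = m$ and zero differential. Granting this, it remains to exhibit a quasi-isomorphism of DG algebras $\catA \simto \LL[x^{\pm 1}]$ and then compose the three equivalences to obtain $\catT \simeq \perf(\catA) \simeq \perf\bigl(\LL[x^{\pm 1}]\bigr) \simeq D_m(\LL)$.

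The crux---and the principal obstacle---is this formality statement for $\catA$. The plan is to exploit the hypothesis $\pd_{\LL^{\op}\otimes\LL}\LL\le m$ via a projective bimodule resolution $P_{\bu}\surj\LL$ of length at most $m$. Such a resolution enables the construction of an explicit chain-level map lifting the periodicity class $x\in H^m(\catA)$ to a genuine cocycle, and more generally realizes the whole graded algebra $\LL[x^{\pm 1}]$ inside $\catA$ by a quasi-isomorphism. Equivalently, the higher $A_\infty$ operations on $H^{\ast}(\catA)$ that obstruct formality lie in Hochschild-type bidegrees which the bound on $\pd_{\LL^{\op}\otimes\LL}\LL$ forces to vanish; inductively killing these obstructions by a sequence of homotopies produces the required quasi-isomorphism, and the theorem follows.
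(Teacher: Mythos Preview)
Your proposal is correct and follows essentially the same route as the paper: realize $\catT$ as $\perf$ of a DG algebra whose cohomology is $\LL[t,t^{-1}]$ (Fact~\ref{fct:morita}), identify $D_m(\LL)\simeq\perf(\LL[t,t^{-1}])$ (Proposition~\ref{lem:perf}), and reduce to the intrinsic formality of $\LL[t,t^{-1}]$. The paper makes your last paragraph precise exactly as you suggest, via Hochschild cohomology: it tensors a length-$d$ projective $\LL^e$-resolution of $\LL$ with the Koszul resolution of $\bbk[t,t^{-1}]$ to show $HH^{q,2-q}(\LL[t,t^{-1}])=0$ for $q\ge 3$ whenever $d\le m$ (Lemma~\ref{lem:hoch}, Corollary~\ref{cor:formal}), and then invokes the Roitzheim--Whitehouse criterion (Fact~\ref{fct:if}), which is the cohomological incarnation of your $A_\infty$-obstruction argument.
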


At the first look, one may worry about the condition \eqref{eq:thmA:cond} 
of projective dimension as bimodules. 
In fact, we can replace it to the condition of global dimension of algebras
under a condition on the base field.
\begin{thma}[{Corollary \ref{cor:Main}}]\label{thm:B}
Let $\catT$ be an idempotent complete algebraic
$m$-periodic triangulated category over a perfect field $\bbk$.
If $\catT$ has an $m$-periodic tilting object $T$
which satisfies the condition
\[
\text{$\LL :=\End_{\catT}(T)$
is a finite dimensional $\bbk$-algebra of global dimension $d\leq m$},
\]
then there exists a triangulated equivalence
\[
\catT\simeq D_m\left( \Lambda \right).
\]
\end{thma}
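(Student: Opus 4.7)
The plan is to deduce Theorem B directly from Theorem A. The two hypotheses differ only in the condition on $\LL$: Theorem A requires the bimodule projective dimension bound $\pd_{\LL^{\op}\otimes\LL}\LL \leq m$, whereas Theorem B asks only that $\gd\LL \leq m$. Therefore the reduction boils down to establishing, under the assumption that $\bbk$ is perfect, the inequality
\[
\pd_{\LL^{\op}\otimes\LL}\LL \leq \gd\LL.
\]

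The crux is that perfectness of $\bbk$ forces the semisimple quotient $S := \LL/\rad\LL$ to be \emph{separable} over $\bbk$, so that $S^{\op}\otimes S$ is itself semisimple and a Wedderburn--Malcev splitting $\LL \cong S \oplus \rad\LL$ realizes $S$ as a $\bbk$-subalgebra of $\LL$. Under such a separability hypothesis it is classical that the projective dimension of $\LL$ over $\LL^{\op}\otimes\LL$ coincides with $\gd\LL$; the argument I would use is to build a projective $\LL^{\op}\otimes\LL$-resolution of $\LL$ from the relative bar complex with respect to $S$, whose $n$-th term is $\LL\otimes_S(\rad\LL)^{\otimes_S n}\otimes_S\LL$. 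Since $S$ is separable over $\bbk$, every $S$-bimodule is projective as an $S^{\op}\otimes S$-module, and consequently these bar terms are projective over $\LL^{\op}\otimes\LL$. Comparing the length of this resolution with projective resolutions of the simple $\LL$-modules (using the standard fact that tensoring an $\LL^{\op}\otimes\LL$-projective resolution of $\LL$ against any $\LL$-module yields an $\LL$-projective resolution) then pins down the required inequality.

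With this lemma in hand, $\gd\LL \leq m$ upgrades automatically to $\pd_{\LL^{\op}\otimes\LL}\LL \leq m$, and Theorem A delivers the desired equivalence $\catT \simeq D_m(\LL)$. The main (and in fact only) obstacle is the bimodule projective dimension lemma; once one is comfortable with the separability and Wedderburn--Malcev machinery, everything else is a direct appeal to Theorem A.
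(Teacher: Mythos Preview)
Your proposal is correct and follows essentially the same route as the paper. Both arguments reduce Theorem~B to Theorem~A by showing that, over a perfect field, $\pd_{\LL^{\op}\otimes\LL}\LL = \gd\LL$ via separability of $\LL/\rad\LL$; the paper simply cites \cite[Corollary 3.19, 3.22]{RR20} for this equality (packaged as the equivalence between homological smoothness of dimension $d$ and the conjunction ``$\LL/\rad\LL$ separable and $\gd\LL=d$''), whereas you sketch the underlying relative bar resolution argument directly.
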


A key statement to prove Theorem \ref{thm:A} is Theorem \ref{thm:C},
which states that the Laurent polynomial ring is 
\emph{intrinsically formal} (Definition \ref{dfn:dg:formal}) under some assumptions.

\begin{thma}[{Corollary  \ref{cor:formal}}]\label{thm:C}
Let $\LL$ be an $\bbk$-algebra which has a finite projective dimension $d$ 
as a $\LL$-bimodule.
We regard the Laurent polynomial ring $\LL[t,t^{-1}]$ over $\LL$ 
as a graded algebra by setting $\deg(t):=m$.
If $d \leq m$, then $\LL[t, t^{-1}]$ is intrinsically formal.
\end{thma}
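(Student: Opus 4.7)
The plan is to reduce the corollary to a general intrinsic formality criterion phrased in terms of Hochschild cohomology (presumably Theorem \ref{thm:formal} proved in the body of the paper), and then to verify the required vanishing for the graded algebra $A:=\LL[t,t^{-1}]$ from the hypothesis $d\leq m$. By the Kadeishvili-type obstruction theory for $A_{\infty}$-structures, a graded $\bbk$-algebra $A$ is intrinsically formal provided its Hochschild cohomology in cohomological degree $n$ and internal degree $2-n$ vanishes for every $n\geq 3$. So the task reduces to a bigraded computation of $HH^{*}(A,A)$.

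I would combine two ingredients. First, writing $A = \LL\otimes_{\bbk}\bbk[t,t^{-1}]$ as graded $\bbk$-algebras (with $\LL$ in internal degree $0$ and $\deg t=m$), the K\"unneth formula gives
\[
HH^{*}(A,A) \;\cong\; HH^{*}(\LL,\LL)\otimes_{\bbk} HH^{*}(\bbk[t,t^{-1}])
\]
as bigraded algebras. Second, the Koszul-type bimodule resolution
\[
0 \longto \bbk[t,t^{-1}]\otimes_{\bbk}\bbk[s,s^{-1}] \xr{\,t-s\,} \bbk[t,t^{-1}]\otimes_{\bbk}\bbk[s,s^{-1}] \longto \bbk[t,t^{-1}] \longto 0,
\]
with the left-hand term internally shifted by $-m$ to make the differential degree preserving, yields $HH^{0}=\bbk[t,t^{-1}]$, $HH^{1}=\bbk[t,t^{-1}]$ generated in internal degree $-m$ by the derivation $\partial_{t}$, and $HH^{\geq 2}=0$. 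Combined with the bimodule bound $\pd_{\LL^{\op}\otimes\LL}\LL=d$, which forces $HH^{p}(\LL,\LL)=0$ for $p>d$, it follows that $HH^{n}(A,A)$ is supported in cohomological degrees $0\leq n\leq d+1$ and internal degrees lying in $m\bbZ$.

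To conclude, I check the obstructions in cohomological degree $n\geq 3$ and internal degree $2-n$. Non-vanishing forces $2-n\in m\bbZ$; for $n\geq 3$ the smallest admissible value is $n=m+2$. The hypothesis $d\leq m$ then gives $n\geq m+2>d+1$, so $HH^{n}(A,A)=0$ already at the cohomological level. All obstructions therefore vanish and $A$ is intrinsically formal. The main technical hurdle will be the bookkeeping of the internal grading shift in the Koszul resolution of $\bbk[t,t^{-1}]$ and matching the cohomological bound $d+1$ with the internal-degree constraint $2-n\in m\bbZ$; once this is tracked carefully, the inequality $d\leq m$ closes the argument immediately.
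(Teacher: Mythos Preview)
Your proposal is correct and follows essentially the same route as the paper. The paper proves the needed vanishing (Lemma \ref{lem:hoch}) by tensoring a length-$d$ projective $\LL^e$-resolution of $\LL$ with the length-$1$ Koszul $S$-resolution of $\bbk[t,t^{-1}]$ to obtain a length-$(d+1)$ projective $A^e$-resolution whose terms sit in internal degrees $0$ and $m$; your K\"unneth argument is just the cohomological repackaging of the same tensor-of-resolutions construction, and the final numerical check ($n\ge 3$, $2-n\in m\bbZ \Rightarrow n\ge m+2\ge d+2$) is identical to the paper's.
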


The proof of this statement is inspired by \cite{HK19}.

\subsection*{Organization}

This paper is organized as follows.
In Section \ref{s:pre},
we recall some basic facts on DG categories and 
Hochschild cohomology which will be used in the proof of Theorem \ref{thm:A}.
In Section \ref{s:m-cpx}, 
we give a detailed survey of periodic derived categories.
We emphasis that differential graded categorical point of view.
In Section \ref{s:main},
we give a proof of main theorems.
In Section \ref{s:ex},
we give concrete examples of periodic categories and periodic tilting objects.
We give an application of Theorem \ref{thm:B} 
and an example that Theorem \ref{thm:B} fails 
for an algebra of infinite global dimension.
\subsection*{Acknowledgement}
The author would like to thank his supervisor Shintaro Yanagida
for valuable suggestions and discussion.
He is also very grateful to Norihiro Hanihara.
The author learned tilting theory for triangulated categories 
and DG categories from him,
and discussions with him played a crucial role
in the early stages of this project.
He would like to thank Osamu Iyama, Ryo Takahashi and Haruhisa Enomoto
for insightful comments and suggestions.

\subsection*{Conventions and notation}
\begin{itemize}[nosep]
\item 
$\bbZ_m$ denotes the cyclic group of order $m \in \bbZ_{>0}$.

\item 
Throughout this paper we work over a fixed field $\bbk$.
$\cha(\bbk)$ denotes its characteristic.
All categories and functors are $\bbk$-linear.
All subcategories are full subcategories which is closed under isomorphisms.

Let $\catC$ be a category.
\begin{itemize}[nosep]
\item
$\Ob \catC$ denotes the class of objects of $\catC$.
\item
We denote by $\catC(M,N) = \Hom_{\catC}(M,N)$ the set of morphisms
from $M$ to $N$ for $M, N \in\catC$. 
\end{itemize}

\item 
We denote by $\Sg$ the suspension functor of a triangulated category.

Let $\catI$ be a collection of objects in a triangulated category $\catT$.
\begin{itemize}[nosep]
\item
$\add_{\catT}(\catI)$ denotes
the smallest strictly full additive subcategory of $\catT$ 
which contains $\catI$ and is closed under direct summands.
\item
$\tria_{\catT}(\catI)$ denotes
the smallest strictly full triangulated subcategory of $\catT$ 
which contains $\catI$.
\item
$\thick_{\catT}(\catI)$ denotes
the smallest strictly full triangulated subcategory of $\catT$ 
which contains $\catI$ and is closed under direct summands.
\end{itemize}
\item 
By the word ``algebra" we mean an associative unital finite dimensional algebra
over the base field $\bbk$.
By the word ``module" we mean a right module.

Let $\LL$ be an algebra.
\begin{itemize}[nosep]
\item
$\rad \LL$ denotes the Jacobson radical of $\LL$.
\item
$\gd \LL$ denotes the global dimension of $\LL$.
\item
For a $\LL$-module $M$,
$\pd_{\LL} M$ denotes the projective dimension of $M$.
\item
$\Mod\LL$ (resp.\ $\catmod\LL$) denotes
the category of all (resp.\ finitely generated) $\LL$-modules.
\item
$\Proj \LL$ (resp.\ $\proj\LL$) denotes
the category of all (resp.\ finitely generated) projective $\LL$-modules.
\end{itemize}

\item
By the words ``graded algebra" and ``graded module",
we mean $\bbZ$-graded ones unless otherwise stated.
For a graded module $M$,
we denote the grading structure by $M=\bigoplus_{i \in \bbZ} M^i$,
and the degree of an element $m\in M$ by $\deg(m)$ or $\abs{m}$.

Let $A$ be a graded algebra.
\begin{itemize}[nosep]
\item
$\Gr A$ denotes the category of graded $A$-modules,
i.e., the category of graded $A$-modules and homogeneous $A$-linear maps of degree $0$.
\item
For $\ell \in\bbZ$, $(\ell)$ denotes the $\ell$-degree shift, 
i.e., $M(\ell)^i=M^{i+\ell}$ for $M \in \Gr A$.
\end{itemize}
\end{itemize}

\section{Preliminaries}\label{s:pre}
\subsection{DG categories}\label{ss:dg}
In this subsection,
we recall basic notions on differential graded (DG) categories.
See \cite{Kel94,Dr04} for more details.
The tensor product over $\bbk$ will be denoted by $\otimes$.

A DG $\bbk$-module $V=(V,d_V)$ is a complex of $\bbk$-vector spaces.
The tensor product of two DG $\bbk$-modules $V$ and $W$ is
the graded $\bbk$-module $V\otimes W$ with differential defined by
\[
d_{V\otimes W}(v\otimes w)=d_V(v)\otimes w +(-1)^iv\otimes d_W(w),
\]
where $v\in V^i$ and $w\in W^j$.
A DG category is a $\bbk$-linear category $\catA$
whose morphism spaces $\catA(X,Y)$ are DG $\bbk$-modules 
and whose compositions
\[
\catA(Y,Z)\otimes\catA(X,Y)\to \catA(X,Z),\quad g\otimes f \mapsto gf
\]
are morphism of DG $\bbk$-modules,
that is, for all homogeneous elements $f\in\catA(X,Y)^i$ and $g\in\catA(Y,Z)^j$,
the graded Leibniz rule
\[
d_{\catA(X,Z)}(gf)=d_{\catA(Y,Z)}(g)f + (-1)^jgd_{\catA(X,Y)}(f)
\]
holds.
$f\in\catA(X,Y)^p$ is called a morphism of degree $p$.
$Z^p(\catA)(X,Y):=Z^p(\catA(X,Y))$ denotes
the $p$th cocycle of a DG $\bbk$-module $\catA(X,Y)$.
Similarly,
$B^p(\catA)(X,Y):=B^p(\catA(X,Y))$ denotes
the $p$th coboundary of a DG $\bbk$-module $\catA(X,Y)$.
A morphism $f:X\to Y$ is \emph{closed} 
if $f\in Z(\catA)(X,Y):=\bigoplus_{p\in\bbZ}Z^p(\catA)(X,Y)$.
Also, $f$ is \emph{exact}
if $f\in B(\catA)(X,Y):=\bigoplus_{p\in\bbZ}B^p(\catA)(X,Y)$.

Let $\catA$ be a (small) DG category.
We define categories $Z^0(\catA)$ and $H^0(\catA)$.
$Z^0(\catA)$ has the same objects as $\catA$ and 
its morphism spaces are $Z^0(\catA)(X,Y)$.
Similarly, $H^0(\catA)$ has the same objects as $\catA$ 
and its morphism spaces are defined by
\[
H^0(\catA)(X,Y) := H^0\left(\catA(X,Y)\right), 
\]
the $0$th cohomology of a DG $\bbk$-module $\catA(X,Y)$.
We call $H^0(\catA)$ the \emph{homotopy category} of $\catA$.
Note that $H^0(\catA)$ is not necessarily a triangulated category.

\begin{ex}
The DG category $C_{\dg}(\bbk)$ of DG $\bbk$-modules
is defined as follows.
\begin{itemize}[nosep]
\item
The objects are DG $\bbk$-modules.
\item 
The morphism spaces are DG $\bbk$-modules $\dgHom_{\bbk}(V,W)$,
where $\dgHom_{\bbk}(V,W)^p$ is the $\bbk$-vector space of
homogeneous morphisms $f:V\to W$ of degree $p$ and
\begin{align*}
d_{\dgHom_{\bbk}(V,W)}(f) := d_W\circ f - (-1)^p f\circ d_V.
\end{align*}
\end{itemize}
Then
$C(\bbk):=Z^0(C_{\dg}(\bbk))$ is
the category of complexes over $\bbk$-vector spaces and chain maps.
$K(\bbk):=H^0(C_{\dg}(\bbk))$ is 
the usual homotopy category of complexes over $\bbk$-vector spaces.
\end{ex}

\begin{ex}
Let $\catA$ be a DG category.
The \emph{opposite DG category} $\catA^{\op}$ consists of the same objects as $\catA$,
and morphism spaces $\catA^{\op}(X,Y):=\catA(Y,X)$.
The composition of $\catA^{\op}$ is given by
\[
\catA^{\op}(Y,Z)\otimes \catA^{\op}(X,Y) \to \catA^{\op}(X,Z),
\quad g\otimes f \mapsto (-1)^{\abs{f}\abs{g}}fg.
\]
\end{ex}

A DG functor between DG categories $\catA$ and $\catB$
is a functor $F:\catA \to \catB$ with
\[
F_{X,Y}:\catA(X,Y)\to \catB(FX,FY)
\]
is a morphism of DG $\bbk$-modules for all $X,Y\in\catA$.

Let $\catA$ be a DG category.
A (right) DG $\catA$-module is
a DG functor $\catA^{\op} \to C_{\dg}(\bbk)$.
We denote by $C_{\dg}(\catA)$ the DG category of DG $\catA$-modules.
See \cite[\S 1]{Kel94} for the definition of morphism spaces of $C_{\dg}(\catA)$.
Each object $X \in \catA$ produces a DG $\catA$-module
\[
X^{\wedge} := \catA(-, X),
\]
which is called a \emph{representable} DG $\catA$-module.
This gives a fully faithful DG functor $\catA \inj C_{\dg}(\catA)$,
which is called the \emph{DG Yoneda embedding}.
We write $C(\catA):=Z^0\left( C_{\dg}(\catA) \right)$
and $K(\catA):= H^0\left( C_{\dg}(\catA) \right)$. 
$C(\catA)$ is a Frobenius exact category.
The stable category of $C(\catA)$
coincides with the homotopy category $K(\catA)$.
In particular, $K(\catA)$ is an algebraic triangulated category.
The \emph{derived category} $D(\catA)$ of $\catA$ is
the localization of $K(\catA)$ with respect to quasi-isomorphisms.
The \emph{perfect derived category} of $\catA$ is defined by
\[
\perf(\catA):= \thick_{D(\catA)} \left( \text{representable DG $\catA$-modules} \right)
\]
and call an object of $\perf(\catA)$ a \emph{perfect} DG $\catA$-module.

We can define the shift of a DG $\catA$-module,
and the cone of a morphism in $C(\catA)$
in a similar manner to complexes.
Note that $\catA$ can be viewed as a DG subcategory of $C_{\dg}(\catA)$
by DG Yoneda embedding.
$\catA$ is said to be an \emph{exact} DG category
if $\catA$ has the zero object and,
is closed under taking shifts and cones in $C_{\dg}(\catA)$.
In this case, 
$Z^0(\catA)$ is a Frobenius exact category
whose stable category is $H^0(\catA)$,
and thus it is a triangulated category.
A fully faithful functor $H^0(\catA) \to K(\catA)$
induced by the DG Yoneda embedding is a triangulated functor.
When a triangulated category $\catT$ is triangulated equivalent to
$H^0(\catA)$ for some exact DG category $\catA$,
we say that $\catA$ is a \emph{DG enhancement} of $\catT$.

We can characterize exact DG categories intrinsically.
For $X\in\catA$ and $\ell\in \bbZ$,
$Z\in\catA$ is the $\ell$th \emph{shift} of $X$
if there are closed morphisms $\xi_X :Z\to X$ and $\zeta_X :X\to Z$
of degree $\ell$ and $-\ell$ respectively,
which satisfies $\zeta_X\xi_X=\id_{Z}$ and $\xi_X\zeta_X=\id_X$.
Let $f:X \to Y$ be a closed morphism of degree $0$
and suppose that $X$ has the $1$st shift $X[1]$ of $X$.
$\Cone(f)\in \catA$ is the \emph{cone} of $f$
if there exists a diagram
\begin{equation*}\label{diag:cone}
\begin{tikzcd}
Y \arrow[r,yshift=0.7ex,"{i_f}"] & \Cone(f) \arrow[r,yshift=0.7ex,"{p_{f}}"] \arrow[l,yshift=-0.7ex,"{q_{f}}"] & X[1] \arrow[l,yshift=-0.7ex,"{j_{f}}"],
\end{tikzcd}
\end{equation*}
which is called the \emph{cone diagram} of $f$,
satisfying
\begin{itemize}[nosep]
\item $i_f, j_f, p_f, q_f$ are morphisms of degree $0$.
\item $q_f\circ i_f=\id_{Y}$, $p_f\circ j_f=\id_{X[1]}$,
$i_f\circ q_f + j_f\circ p_f = \id_{\Cone(f)}$.
\item $d(i_f)=0$, $d(p_f)=0$, $d(j_f)=i_f\circ  f\circ \xi_X$,
$d(q_f)=-f\circ \xi_X\circ p_f$. 
\end{itemize}
If every object has its $\pm 1$st shifts and 
every closed morphism of degree $0$ has its cone in $\catA$,
then $\catA$ is an exact DG category.

We denote by $\catA^0$ the additive category consisting of the same object as $\catA$
and its morphism spaces are defined by 
$\catA^0(X,Y):=\catA(X,Y)^0$, the $0$th part of a graded module $\catA(X,Y)$. 
A sequence $X\to Y \to Z$ in $Z^0(\catA)$ is \emph{graded split}
if it is a split exact sequence in $\catA^0$.
An object $X\in\catA$ is called \emph{contractible}
if its identity morphism $\id_X$ is an exact morphism.
Suppose $\catA$ is an exact DG category.
We can describe the exact structure on $Z^0(\catA)$ by the above notions.
A sequence $X\to Y \to Z$ in $Z^0(\catA)$ is 
a conflation of a Frobenius exact category $Z^0(\catA)$
if and only if it is a graded split sequence.
$X\in Z^0(\catA)$ is a projective object of a Frobenius exact category $Z^0(\catA)$
if and only if it is contractible.

We identify a DG algebra with a DG category with one object.
Hence the notation above is also used for DG algebras.
For a DG algebra $A$,
a graded algebra $H^*(A):=\bigoplus_{i\in\bbZ}H^i(A)$
is called the \emph{cohomology algebra} of $A$.
A morphism $f:A\to B$ of DG algebras is said to be \emph{quasi-isomorphism}
if induced morphism $H^*(f):H^*(A) \to H^*(B)$ on their cohomology algebras
is an isomorphism of graded algebras.
Two DG algebras $A$ and $B$ are \emph{quasi-isomorphic}
if there exist DG algebras $C_1,\dots, C_n$ and 
a chain of quasi-equivalence $\catA \simot C_1 \simto \cdots \simot C_n \simto B$.

We recall some facts which we use in this paper.
\begin{fct}[{\cite[\S 6.1]{Kel94}}]\label{fct:DG qis}
If two DG algebras $A$ and $B$ are quasi-isomorphic,
there exists a triangulated equivalence $D(A) \simeq D(B)$.
In particular, this equivalence induces a triangulated equivalence
$\perf(A) \simeq \perf(B)$.
\end{fct}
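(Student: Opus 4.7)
The plan is to reduce to the case of a single quasi-isomorphism $f\colon A\to B$ of DG algebras, since an arbitrary chain of quasi-isomorphisms is handled by composing the equivalences obtained from each step. So assume $f\colon A\to B$ is a quasi-isomorphism. View $B$ as an $A$-$B$-bimodule via $f$ (that is, left $A$-action through $f$ and right $B$-action by multiplication), and consider the adjoint pair
\[
-\LT_A B\colon D(A) \rightleftarrows D(B) \colon f_*,
\]
where $f_*$ denotes the restriction of scalars along $f$. I would prove that the unit and counit of this adjunction are isomorphisms in the respective derived categories.

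The heart of the argument is the following claim: for every DG $A$-module $M$, the natural morphism
\[
M \longrightarrow M\LT_A B
\]
is a quasi-isomorphism. To see this, first replace $M$ by a K-projective (equivalently, cofibrant) resolution $pM\to M$, so that $M\LT_A B \simeq pM\otimes_A B$. For $M=A$ the assertion reduces to the given quasi-isomorphism $A\to B$. Using that cofibrant DG $A$-modules are built (up to retracts) from shifts of $A$ by iterated extensions along semi-free filtrations, a straightforward dévissage using the five lemma in cohomology then transports the statement from $M=A$ to arbitrary cofibrant $M$. This proves that the unit $\id_{D(A)}\Rightarrow f_*(-\LT_A B)$ is an isomorphism. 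A symmetric argument over $B$ (or a direct computation using the same resolution) shows the counit is also an isomorphism, giving the equivalence $D(A)\simeq D(B)$. This equivalence is triangulated because both functors commute with shifts and cones by construction.

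For the restriction to perfect derived categories, observe that the equivalence sends the free rank-one module $A^{\wedge}$ to $A\LT_A B \simeq B = B^{\wedge}$. Since any triangulated equivalence preserves thick subcategories generated by corresponding objects, and $\perf(A)=\thick_{D(A)}(A^{\wedge})$, $\perf(B)=\thick_{D(B)}(B^{\wedge})$ by definition, the equivalence restricts to a triangulated equivalence $\perf(A)\simeq \perf(B)$.

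The main obstacle is the key claim that $M\to M\LT_A B$ is a quasi-isomorphism for all $M$; the subtlety lies in ensuring existence and good behaviour of K-projective resolutions in $C_{\dg}(A)$, which is what allows one to actually compute $\LT_A$ and reduce, via the semi-free filtration, to the base case $M=A$ where the statement is the hypothesis. Everything else is formal adjunction bookkeeping together with the fact that a triangulated equivalence carries thick subcategories to thick subcategories.
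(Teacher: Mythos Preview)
The paper does not give its own proof of this fact; it merely records it as a citation to \cite[\S 6.1]{Kel94}. Your argument is correct and is essentially the one Keller gives there: restriction along $f$ and derived extension of scalars form an adjoint pair, and one checks the unit and counit are isomorphisms by reducing, via semi-free (K-projective) resolutions and d\'evissage, to the base case $A\to B$.

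One small point: the phrase ``a symmetric argument over $B$'' for the counit is not literally symmetric, since there is no DG-algebra map $B\to A$ available. What actually works is either the direct computation you allude to (for $N$ cofibrant over $B$, the counit $(f_*N)\otimes_A B\to N$ is a quasi-isomorphism by the same filtration argument, now built from $B^{\wedge}$), or the observation that $f_*$ is conservative together with the triangle identity for the adjunction once the unit is known to be invertible. Either route is routine; just be explicit about which one you are using.
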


\begin{fct}[{\cite[\S 4.3]{Kel94}}]\label{fct:morita}
Let $\catT$ be an idempotent complete algebraic triangulated category, 
and $T\in\catT$.
Then there exists a DG algebra $B$ satisfying the following conditions.
\begin{enur}
\item
There exists a triangulated equivalence
$\thick_{\catT}(T)\simeq \perf(B) \subset D(B)$.
\item
$H^*(B):=\bigoplus_{i\in\bbZ}H^i(B)\simeq \bigoplus_{i\in\bbZ}\Hom_{\catT}(T, \Sg^i T)$
as graded algebras.
\end{enur}
\end{fct}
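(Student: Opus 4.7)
The plan is to upgrade the abstract triangulated object $T$ into a DG‑categorical object by using the algebraicity assumption, and then to run the standard derived Morita argument on the resulting DG endomorphism algebra.

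First I would invoke the algebraic hypothesis to obtain a DG enhancement $\catA$ of $\catT$, i.e.\ an exact DG category together with a triangulated equivalence $H^0(\catA)\simeq\catT$. Concretely, write $\catT\simeq\scatE$ for a Frobenius exact category $\catE$ and let $\catA$ be the DG category of bounded complexes of projective‑injective objects of $\catE$ modulo the usual sign conventions; by construction $Z^0(\catA)$ is Frobenius with stable category $H^0(\catA)\simeq\catT$. Pick any lift $\wt{T}\in\catA$ of $T$ under this equivalence, and set
\[
B:=\catA(\wt{T},\wt{T}),
\]
viewed as a DG $\bbk$-algebra with the composition of $\catA$ as multiplication and the differential inherited from the DG structure on the morphism complex. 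By definition of the homotopy category,
\[
H^i(B)=H^i\!\left(\catA(\wt{T},\wt{T})\right)=\Hom_{H^0(\catA)}(\wt{T},\wt{T}[i])\cong\Hom_{\catT}(T,\Sg^i T),
\]
and a direct check of signs shows that the induced multiplication on $H^*(B)$ is the Yoneda composition on the right‑hand side, giving part (ii).

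Next I would build the comparison functor via the DG Yoneda module
\[
F:=\RHom_{\catA}(\wt{T},-)\colon \catA\longrightarrow C_{\dg}(B),
\]
which on objects sends $X\in\catA$ to the right DG $B$-module $\catA(\wt{T},X)$ with its natural left $B$-action. Passing to $H^0$ and inverting quasi‑isomorphisms yields a triangulated functor $\ol{F}\colon H^0(\catA)\to D(B)$, and by construction $\ol{F}(\wt{T})=B$, the free rank‑one DG $B$-module. I then restrict $\ol{F}$ to $\thick_{\catT}(T)\subseteq\catT\simeq H^0(\catA)$. The essential image contains $B$ and is a thick subcategory of $D(B)$, hence contains $\thick_{D(B)}(B)=\perf(B)$, giving essential surjectivity onto $\perf(B)$.

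For full faithfulness I use the standard two‑variable dévissage: fix $m\in\bbZ$ and consider the class
\[
\catX_m:=\bigl\{X\in\thick_{\catT}(T)\bigm|\ol{F}_{X}\colon\Hom_{\catT}(\Sg^m T,X)\xrightarrow{\sim}\Hom_{D(B)}(B[m],\ol{F}(X))\bigr\}.
\]
By the cohomology computation above, $\catX_m$ contains $T$; since $\ol{F}$ is triangulated and $\Hom_{D(B)}(B[m],-)$ is cohomological, $\catX_m$ is a thick subcategory, hence equals $\thick_{\catT}(T)$. Swapping the roles of the variables and running the same argument in the first slot shows that $\Hom_{\catT}(X,Y)\to\Hom_{D(B)}(\ol{F}(X),\ol{F}(Y))$ is an isomorphism for all $X,Y\in\thick_{\catT}(T)$; here idempotent completeness of $\catT$ is used precisely to know that $\thick_{\catT}(T)$ is itself idempotent complete and matches the idempotent‑complete $\perf(B)$.

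The main obstacle is really the first step: producing a DG enhancement $\catA$ whose homotopy category is triangulated equivalent to $\catT$ in a manner compatible with the algebraic structure, so that the DG endomorphism algebra $B$ genuinely computes the graded Ext algebra. Once such $\catA$ and a lift $\wt{T}$ are in hand, the rest is the by‑now standard Morita argument of Keller recalled above; the only delicate points are the sign conventions needed to identify multiplication on $H^*(B)$ with Yoneda composition, and the use of idempotent completeness to pass from $\thick_{D(B)}(B)$ to all of $\perf(B)$.
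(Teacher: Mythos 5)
This item is stated in the paper as a \emph{Fact} cited from Keller \cite[\S 4.3]{Kel94}; the paper gives no proof of its own, so the right comparison is against Keller's argument, which is exactly what you are reconstructing. Your overall plan — lift $T$ to a DG enhancement $\catA$, set $B=\catA(\wt{T},\wt{T})$, compare via the DG Hom functor $\catA(\wt{T},-)\colon\catA\to C_{\dg}(B)$, use the cohomology computation to get (ii), and use thick-subcategory d\'evissage together with idempotent completeness to get (i) — is precisely Keller's strategy, so the approach is the right one.

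There is, however, a concrete error at the step you yourself flag as ``the main obstacle.'' You propose taking $\catA$ to be ``the DG category of bounded complexes of projective-injective objects of $\catE$,'' but $H^0$ of that DG category is the bounded homotopy category $K^b(\Proj\catE)$, not the stable category $\scatE\simeq\catT$. (For $\catE=\catmod\LL$ with $\LL$ self-injective, $K^b(\proj\LL)$ and $\smod\LL$ are very different.) The standard enhancement of an algebraic triangulated category $\scatE$ uses instead the DG category of \emph{acyclic} (complete-resolution) complexes of projective-injectives, with $Z^0$ giving the object of $\catE$; with that replacement, $H^0(\catA)\simeq\scatE$, $H^i(\catA(\wt{T},\wt{T}))\simeq\catT(T,\Sg^iT)$, and the rest of your argument goes through. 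One further small point of order: you argue essential surjectivity onto $\perf(B)$ by appealing to thickness of the essential image of $\ol F$, but the essential image of a triangulated functor is only thick once you already know $\ol F$ is fully faithful (so that idempotent completeness of $\thick_{\catT}(T)$ transfers); so full faithfulness should be established first, and then the essential-image argument closes the loop.
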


Hence the triangulated structure of $\thick_{\catT}(T)$ reflects
the structure of DG algebra $B$.
Under some assumption, 
DG algebras are determined by its cohomology ring $H^*(B)$.
\begin{dfn}\label{dfn:dg:formal}
Let $B$ be a DG algebra.
\begin{enua}
\item $B$ is \emph{formal} if it is quasi-isomorphic to $H^*(B)$.
\item A graded algebra $A$ is \emph{intrinsically formal}
if any DG algebra $B$ satisfying $H^*(B)\simeq A$ as graded algebras
is formal.
\end{enua}
\end{dfn}

\subsection{Hochschild cohomology}\label{ss:Hoc}
To describe a sufficient condition of being intrinsically formal,
we cite from \cite{RW11} a fact on Hochschild cohomology.
See \cite[\S 9]{W94} for the basics and the detail on Hochschild cohomology.

Let $A$ be a graded algebra.
We denote by $A^e := A^{\op} \otimes A$ the \emph{enveloping algebra} of $A$.
Thus $A^e$-modules are nothing but $A$-$A$-bimodules such that $\bbk$ acts centrally.

\begin{dfn}
The \emph{bar resolution} of $A$ is the complex of graded $A^e$-modules
\[
B_{\bu}: \cdots \to A^{\otimes (q+2)} \xr{d_q} A^{\otimes (q+1)}
\to \cdots \to A^{\otimes 2} \to 0,
\]
where $B_q:=A^{\otimes (q+2)}$ and $d_q$ is given by
\[
 d_q(a_1\otimes \dots \otimes a_{q+2})
 := \sum_{i=1}^{q+1}(-1)^{i} a_1\otimes \dots \otimes (a_i a_{i+1}) \otimes a_{q+2}.
\]
\end{dfn}

\begin{dfn}
Let $M$ be a graded $A$-module.
The \emph{Hochschild cohomology} of $A$ valued in $M$ is defined by
\[
 HH^{p,q}(A,M) := H^p\left(\Hom_{\Gr A^e}\left(B_{\bu}, M(q)\right)\right)
\]
for $p,q \in \bbZ$.
To shorten the notation, we write $HH^{p,q}(A)$ instead of $HH^{p,q}(A,A)$.
\end{dfn}
\begin{rmk}
Since the bar resolution of $A$ is
a projective resolution of $A$ as a graded $A^e$-module,
we have $HH^{p,q}(A,M) = \Ext^p_{\Gr A^e}(A, M(q))$.
Hence for any other projective resolution $P_{\bu}$ of $A$ as a graded $A^e$-module,
we have $HH^{p,q}(A, M) =H^p\left(\Hom_{\Gr A^e}\left(P_{\bu}, M(q)\right)\right)$.
\end{rmk}

The next fact gives a useful criterion for a graded algebra
to be intrinsically formal.
\begin{fct}[{\cite[Corollary 1.9]{RW11}}]\label{fct:if}
Let $A$ be a graded algebra.
If $HH^{q,2-q}(A)=0$ for $q \geq 3$,
then $A$ is intrinsically formal.
\end{fct}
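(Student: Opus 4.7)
The plan is to apply Fact \ref{fct:if}: setting $A := \LL[t, t^{-1}]$ with $\deg(t) = m$, it suffices to verify $HH^{q, 2-q}(A) = 0$ for every $q \geq 3$.

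Since $A = \LL \otimes \bbk[t, t^{-1}]$ as graded $\bbk$-algebras (with $\LL$ concentrated in degree $0$), we have $A^e = \LL^e \otimes \bbk[t, t^{-1}]^e$. I will construct a compact graded projective resolution of $A$ over $A^e$ as a tensor product of two ingredients. First, using $\pd_{\LL^e} \LL = d$, choose a projective resolution $P_\bu \to \LL$ over $\LL^e$ of length $d$, concentrated in internal degree $0$. Second, take the length-$1$ resolution $Q_\bu$ given by
\[
0 \to \bbk[t, t^{-1}]^e(-m) \xr{t \otimes 1 - 1 \otimes t} \bbk[t, t^{-1}]^e \to \bbk[t, t^{-1}] \to 0,
\]
where the shift $(-m)$ is forced by $\deg(t) = m$ so that the differential has degree $0$. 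Their tensor product $P_\bu \otimes Q_\bu$ is a graded projective resolution of $A$ over $A^e$ of length $d + 1$.

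Applying $\Hom_{\Gr A^e}(-, A(q))$ and using the K\"unneth formula over the field $\bbk$ yields
\[
HH^{p, q}(A) \cong \bigoplus_{i + j = p} HH^{i}(\LL) \otimes HH^{j, q}(\bbk[t, t^{-1}]),
\]
where $HH^{i}(\LL) := HH^{i, 0}(\LL)$ is the ordinary Hochschild cohomology; the other internal degrees for $\LL$ drop out since $\LL$ sits in degree $0$. A direct computation from the length-$1$ resolution above, noting that commutativity of $\bbk[t, t^{-1}]$ forces the differential in the $\Hom$ complex to vanish, gives
\[
HH^{j, q}(\bbk[t, t^{-1}]) \cong \begin{cases} \bbk & \text{if } j \in \{0, 1\} \text{ and } m \mid q, \\ 0 & \text{otherwise.} \end{cases}
\]

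Combining these, for $q \geq 3$ the target group decomposes as
\[
HH^{q, 2-q}(A) \cong HH^{q}(\LL) \otimes HH^{0, 2-q}(\bbk[t, t^{-1}]) \oplus HH^{q-1}(\LL) \otimes HH^{1, 2-q}(\bbk[t, t^{-1}]).
\]
For either summand to be nonzero, $m \mid (2 - q)$ is required, and combined with $q \geq 3$ this forces $q \geq m + 2$. The hypothesis $d \leq m$ then gives $q - 1 \geq m + 1 > d$, so $HH^{q}(\LL) = HH^{q-1}(\LL) = 0$ by the projective dimension bound, yielding $HH^{q, 2-q}(A) = 0$, and the claim follows from Fact \ref{fct:if}. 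The main technical obstacle is to justify the K\"unneth decomposition cleanly in the graded bimodule setting and to pin down the internal-degree shift on the Koszul-type resolution of $\bbk[t,t^{-1}]$; once these are correct, the sharp numerical inequality $m + 2 > d + 1$ is exactly the place where the hypothesis $d \leq m$ is used.
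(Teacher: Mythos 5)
You have not addressed the statement at hand. Fact~\ref{fct:if} is a result of Roitzheim and Whitehouse, cited from \cite[Corollary~1.9]{RW11}, and the paper takes it as an external black box rather than proving it. Your proposal opens with ``the plan is to apply Fact~\ref{fct:if}'' and then verifies the Hochschild vanishing hypothesis for $A = \LL[t,t^{-1}]$ --- but that is precisely the proof of Corollary~\ref{cor:formal} (equivalently Theorem~\ref{thm:C}), the downstream consequence which \emph{relies on} Fact~\ref{fct:if}. Proving a statement by invoking that very statement is circular; and a genuine proof of Fact~\ref{fct:if} would require the $A_\infty$-obstruction theory of \cite{RW11}, namely the identification of the obstructions to formality (i.e.\ to trivializing the higher Massey products $m_q$ on $A$) with classes in $HH^{q,2-q}(A)$ --- none of which your argument touches.

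For what it is worth, as a proof of Corollary~\ref{cor:formal} your argument is correct and is essentially the one in the paper, differing only in presentation. Both you and Lemma~\ref{lem:hoch} form the length-$(d+1)$ resolution $P_\bu\otimes Q_\bu$ of $\LL[t,t^{-1}]$ over its enveloping algebra, where $Q_\bu$ is the two-term Koszul resolution of $\bbk[t,t^{-1}]$ with the internal shift $(-m)$. The paper reads off the two vanishing ranges $p\geq d+2$ and $q\not\equiv 0 \mod m$ directly from the length and internal degrees of this resolution; you instead package the same information via a K\"unneth decomposition $HH^{p,q}(A)\cong\bigoplus_{i+j=p} HH^i(\LL)\otimes HH^{j,q}(\bbk[t,t^{-1}])$ and compute the right-hand factors explicitly. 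This is a slightly more verbose route to the same numerical constraint (for $q\geq 3$ with $m\mid(2-q)$ one forces $q\geq m+2\geq d+2$), and it does not constitute a proof of the quoted fact.
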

\section{$m$-periodic complexes}\label{s:m-cpx}
In this section, we give a detailed explanation of periodic complexes.
Some definitions and results are direct analogue of usual complexes,
but we write them down explicitly since we cannot find systematic literature.
We recommend the reader to only skim Definition \ref{dfn:DG m-cpx}, \ref{dfn:periodic acyclic} and \ref{dfn:periodic derived},
and go to \S \ref{ss:periodic cpx as DG} in the first reading.
References for this section are \cite{Br13,Go13,Zhao14,St18}.

Hereafter let $m$ be an integer greater than $1$.
For any integer $i\in\bbZ$,
the symbol $\ol{i} \in \bbZ_m$ is the equivalence class containing $i$.

\subsection{The DG category of periodic complexes}\label{ss:DG periodic cpx}
We give a summary of this subsection.
We first define the DG category $C_m(\catC)_{\dg}$
of periodic complexes over an additive category $\catC$ (Definition \ref{dfn:DG m-cpx})
and prove it is an exact DG category (Corollary \ref{cor:exact DG periodic}).
In particular,
its homotopy category $K_m(\catC):=H^0(C_m(\catC)_{\dg})$ is a triangulated category. 
For an abelian category $\catB$,
we define the $m$-periodic derived category $D_m(\catB)$ by
the localization of $K_m(\catB)$ with respect to quasi-isomorphisms
(Definition \ref{dfn:periodic derived}).
In the next subsection, we will see that $D_m(\catB)$ behaves similarly to
the bounded derived category if $\catB$ have finite global dimension.

\begin{dfn}\label{dfn:DG m-cpx}
Let $\catC$ be an additive category.
\begin{enua}
\item
The DG category $C_m(\catC)_{\dg}$ of \emph{$m$-periodic complexes} over $\catC$
consists of the following data.
\begin{itemize}[nosep]
\item
The objects
are families $V = (V,d_V) = (V^i, d_V^i)_{i\in\bbZ_m}$,
where $V_i \in \catC$ and  $d_V^i:V^i \to V^{i+1}$ is a morphism in $\catC$
satisfying $d_V^{i+1}d_V^i=0$ for all $i \in \bbZ_m$.
\item 
The morphism spaces
are DG $\bbk$-modules $C_m(\catC)_{\dg}(V,W)$,
where
\begin{align*}
&C_m(\catC)_{\dg}(V,W)^p := \bigoplus_{i\in\bbZ_m} \catC(V^i , W^{i+p}), \\
&d_{C_m(\catC)_{\dg}(V,W)}(f) := d_W\circ f - (-1)^p f\circ d_V 
\ (f \in C_m(\catC)_{\dg}(V,W)^p).
\end{align*}
\end{itemize}
\item
$C_m(\catC):=Z^0(C_m(\catC)_{\dg})$ is called
the category of $m$-periodic complexes over $\catC$.
\item
$K_m(\catC) := H^0C_m(\catC)_{\dg}$ is called
the \emph{homotopy category} of $m$-periodic complexes over $\catC$.
\end{enua}
\end{dfn}

An $m$-periodic complex $V$ is called a \emph{stalk complex}
if there exists $i_0\in\bbZ_m$ such that $V^{i_0} \ne 0$
and $V^i =0$ for all $i\ne i_0$.
In this case, we also say that $V$ is \emph{concentrated in degree $i_0$}.
Abusing the notation, we denote by $M\in C_m(\catC)_{\dg}$ 
a stalk complex whose $0$th component is $0\ne M\in\catC$.

\begin{ex}
Let $\catC$ be an additive category,
and $f: V \to W$ be a morphism of degree $0$ in $C_m(\catC)_{\dg}$.
If $f$ is a closed morphism,
\[
d_W\circ f = f \circ d_V
\]
by the definition of differential.
Thus we call $f$ a \emph{chain map}.
If $f$ is an exact morphism,
there exists $g\in C_m(\catC)_{\dg}(V,W)^{-1}$
such that $d(g) = f$.
By definition of the differential,
we have
\[
f=d_W\circ g + g \circ d_V.
\]
Hence we call two chain maps $f, g:V\to W$ are \emph{homotopic}
if $f$ and $g$ coincide in $K_m(\catC)$.
\end{ex}

We will prove that $C_m(\catC)_{\dg}$ is an exact DG category,
and hence $K_m(\catC)$ is a triangulated category.
\begin{prp}
Let $\catC$ be an additive category,
and $f:V\to W$ be a chain map in $C_m(\catC)$.
\begin{enua}
\item
For $\ell\in\bbZ$,
define
\[
V(\ell)^i:=V^{i+\ell},\quad
d^i_{V[\ell]}:=(-1)^{\ell}d^{i+\ell}_V.
\]
Then $V[\ell]:=\left(V(\ell),d_{V[\ell]}\right)$ is the $\ell$th shift of $V$ 
in the DG category $C_m(\catC)_{\dg}$.
\item
Define
\[
 C_f^i := W^i \oplus V^{i+1}, \quad 
 d_f^i := \begin{bmatrix} d_W^i & f^{i+1} \\ & -d_{V[1]}^{i+1}\end{bmatrix}.
\]
Then $\Cone(f):=\left(C_f,d_f\right)$ is the cone of $f$
in the DG category $C_m(\catC)_{\dg}$.
\end{enua}
\end{prp}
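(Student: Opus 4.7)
The plan is to verify, directly from the intrinsic definitions of shift and cone given earlier in \S\ref{ss:dg}, that the explicit constructions of $V[\ell]$ and $\Cone(f)$ supply all the required data. Both checks reduce to componentwise matrix calculations, so the bulk of the work is sign bookkeeping.

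For (1), I first observe that $V[\ell]$ is a well-defined object of $C_m(\catC)_{\dg}$, since $d_{V[\ell]}^{i+1} \circ d_{V[\ell]}^{i} = (-1)^{2\ell} d_V^{i+\ell+1} d_V^{i+\ell} = 0$. Next I exhibit the intertwining data as morphisms
\[
 \xi_V \in C_m(\catC)_{\dg}(V[\ell], V)^{\ell}, \qquad
 \zeta_V \in C_m(\catC)_{\dg}(V, V[\ell])^{-\ell},
\]
whose components $\xi_V^{i} := \id_{V^{i+\ell}}$ and $\zeta_V^{i} := \id_{V^{i}}$ are identities (under the canonical identifications $V[\ell]^{i} = V^{i+\ell}$ and $V[\ell]^{i-\ell} = V^{i}$). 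Applying the graded Leibniz rule, closedness of $\xi_V$ and $\zeta_V$ reduces to the tautology $d_V = d_V$: the sign $(-1)^{\ell}$ appearing in $d_{V[\ell]}$ and the Leibniz sign $(-1)^{\ell}$ (since $\xi_V, \zeta_V$ have odd-or-even degree $\pm\ell$) combine to $+1$. The identities $\zeta_V \circ \xi_V = \id_{V[\ell]}$ and $\xi_V \circ \zeta_V = \id_V$ then hold componentwise, so $V[\ell]$ realizes the $\ell$th shift of $V$ in the sense of \S\ref{ss:dg}.

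For (2), I first check that $d_f^{i+1} \circ d_f^{i} = 0$ by expanding the $2 \times 2$ matrix product: the diagonal entries vanish because $d_W$ and $d_{V[1]}$ are themselves differentials, while the off-diagonal entry vanishes because $f$ is a chain map (the minus sign in front of the shift differential supplies the necessary cancellation with $d_W f = f d_V$). Next I take $i_f, q_f, j_f, p_f$ to be the canonical direct-sum inclusions and projections between $W$, $V[1]$, and $\Cone(f)$, viewed as $2 \times 1$ and $1 \times 2$ matrices with identity entries. They are manifestly of degree $0$, and the algebraic identities $q_f\, i_f = \id_W$, $p_f\, j_f = \id_{V[1]}$ and $i_f\, q_f + j_f\, p_f = \id_{\Cone(f)}$ are read off directly from matrix multiplication. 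The four differential identities $d(i_f) = 0$, $d(p_f) = 0$, $d(j_f) = i_f \circ f \circ \xi_V$, and $d(q_f) = -f \circ \xi_V \circ p_f$ then drop out of the explicit form of $d_f$: the off-diagonal entry $f^{i+1}$ in $d_f^i$ contributes to exactly the differentials of $j_f$ and $q_f$ and produces the prescribed right-hand sides.

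The only genuine obstacle is sign tracking: the Koszul sign $(-1)^{\ell}$ in the shift, the graded Leibniz sign $(-1)^{p}$ for a degree-$p$ morphism, and the minus sign built into $d_f$ must all be reconciled simultaneously. Once the convention is fixed, every identity in (1) and (2) reduces to a one-line matrix computation; the $m$-periodic structure enters only through the tacit observation that all indices are understood modulo $m$, so no new ideas beyond those of the classical (unbounded) complex case are needed.
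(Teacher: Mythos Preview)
Your proposal is correct and follows essentially the same route as the paper: for (1) you take $\xi_V$ and $\zeta_V$ to be the identity maps viewed in the appropriate degrees (the paper phrases this via the isomorphism $C_m(\catC)_{\dg}(V[\ell],W)\simeq C_m(\catC)_{\dg}(V,W)[-\ell]$, but it is the same content), and for (2) you take $i_f,j_f,p_f,q_f$ to be the canonical matrix inclusions and projections and verify the four differential identities by the same block-matrix computation the paper carries out. Your treatment is in fact slightly more explicit than the paper's, since you also spell out the checks that $d_{V[\ell]}^2=0$ and $d_f^2=0$, which the paper leaves implicit.
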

\begin{proof}
(1)
Note that $C_m(\catC)_{\dg}(V[\ell],W)\simeq C_m(\catC)_{\dg}(V,W)[-\ell]$.
\[
\xi_X :=\id_V\in C_m(\catC)(V,V)
=Z^0\left(C_m(\catC)_{\dg}\left(V[\ell],V\right)[\ell]\right)
=Z^{\ell}\left(C_m(\catC)_{\dg}\right)\left(V[\ell],V\right),
\]
is a desired morphism.

(2)
The natural injections and projections
\[
\begin{tikzcd}
W^i \arrow[r,yshift=0.7ex,"{i_f^i}"] & W^i\oplus V^{i+1} \arrow[r,yshift=0.7ex,"{p_{f}^i}"] \arrow[l,yshift=-0.7ex,"{q_{f}^i}"] & V^{i+1} \arrow[l,yshift=-0.7ex,"{j_{f}^i}"]
\end{tikzcd}
\]
define
morphisms
\[
\begin{tikzcd}
W \arrow[r,yshift=0.7ex,"{i_f}"] & \Cone(f) \arrow[r,yshift=0.7ex,"{p_{f}}"] \arrow[l,yshift=-0.7ex,"{q_{f}}"] & V[1] \arrow[l,yshift=-0.7ex,"{j_{f}}"]
\end{tikzcd}
\]
of degree $0$.
By the definition, we have
\[
q_f\circ i_f=\id_{W},\quad
p_f\circ j_f=\id_{V[1]},\quad
i_f\circ q_f + j_f\circ p_f = \id_{\Cone(f)}.
\]
In the following calculation, we omit the subscription $i\in\bbZ_m$.
\begin{align*}
d(i_f)
&=d_{\Cone(f)}\circ i_f - i_f\circ d_W\\
&=
\begin{bmatrix}
d_W & f\\
&-d_V
\end{bmatrix}
\begin{bmatrix}
1\\
0
\end{bmatrix}
-
\begin{bmatrix}
1\\
0
\end{bmatrix}
d_W
=
\begin{bmatrix}
d_W\\
0
\end{bmatrix}
-
\begin{bmatrix}
d_W\\
0
\end{bmatrix}
=0.\\
d(p_f)
&=d_{V[1]}\circ p_f - p_f\circ d_{\Cone(f)}\\
&=-d_V
\begin{bmatrix}
0&1
\end{bmatrix}
-
\begin{bmatrix}
0&1
\end{bmatrix}
\begin{bmatrix}
d_W & f\\
&-d_V
\end{bmatrix}
=
\begin{bmatrix}
0&-d_V
\end{bmatrix}
-
\begin{bmatrix}
0&-d_V
\end{bmatrix}
=0.\\
d(j_f)
&=d_{\Cone(f)}\circ j_f - j_f\circ d_{V[1]}\\
&=
\begin{bmatrix}
d_W & f\\
&-d_V
\end{bmatrix}
\begin{bmatrix}
0\\
1
\end{bmatrix}
+
\begin{bmatrix}
0\\
1
\end{bmatrix}
d_{V}
=
\begin{bmatrix}
f\\
-d_V
\end{bmatrix}
+
\begin{bmatrix}
0\\
d_V
\end{bmatrix}
=
\begin{bmatrix}
f\\
0
\end{bmatrix}\\
&=i_f\circ f.\\
d(q_f)
&=d_{W}\circ q_f - q_f\circ d_{\Cone(f)}\\
&=d_W
\begin{bmatrix}
1&0
\end{bmatrix}
-
\begin{bmatrix}
1&0
\end{bmatrix}
\begin{bmatrix}
d_W & f\\
&-d_V
\end{bmatrix}
=
\begin{bmatrix}
d_W&0
\end{bmatrix}
-
\begin{bmatrix}
d_W&f
\end{bmatrix}
=
\begin{bmatrix}
0& -f
\end{bmatrix}\\
&=-f\circ p_f.
\end{align*}
Hence
\[
\begin{tikzcd}
W \arrow[r,yshift=0.7ex,"{i_f}"] & \Cone(f) \arrow[r,yshift=0.7ex,"{p_{f}}"] \arrow[l,yshift=-0.7ex,"{q_{f}}"] & V[1] \arrow[l,yshift=-0.7ex,"{j_{f}}"]
\end{tikzcd}
\]
is a cone diagram of $f$.
\end{proof}

\begin{cor}\label{cor:exact DG periodic}
Let $\catC$ be an additive category.
\begin{enua}
\item $C_m(\catC)_{\dg}$ is an exact DG category.
\item $C_m(\catC)$ is a Frobenius category.
\item $K_m(\catC)$ is a triangulated category.\qed
\end{enua}
\end{cor}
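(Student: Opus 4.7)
The plan is to observe that all three claims follow essentially immediately by combining the preceding proposition with the general machinery on exact DG categories recalled in \S\ref{ss:dg}.

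For (1), recall the intrinsic criterion stated just after the definition of ``exact DG category'': a DG category is exact provided it has a zero object, every object admits its $\pm 1$st shifts, and every closed degree-$0$ morphism admits a cone. The zero object is clear (take $V^i = 0$ for all $i$). The preceding proposition constructs explicit shifts $V[\ell]$ (in particular for $\ell = \pm 1$) and explicit cones $\Cone(f)$ for any chain map $f$, and verifies all the required identities $\zeta_X \xi_X = \id$, $\xi_X \zeta_X = \id$, and the cone relations $q_f i_f = \id_W$, $p_f j_f = \id_{V[1]}$, $i_f q_f + j_f p_f = \id_{\Cone(f)}$, together with the differentials of $i_f, p_f, j_f, q_f$. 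Hence the criterion applies.

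For (2), by the general fact recalled in \S\ref{ss:dg}, if $\catA$ is an exact DG category then $Z^0(\catA)$ inherits a canonical Frobenius exact structure whose conflations are exactly the graded split sequences and whose projective-injective objects are exactly the contractible ones. Applying this to $\catA = C_m(\catC)_{\dg}$, which is exact by (1), yields that $C_m(\catC) = Z^0(C_m(\catC)_{\dg})$ is Frobenius.

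For (3), the stable category of the Frobenius exact category $Z^0(\catA)$ is $H^0(\catA)$, and this is the source of the triangulated structure. Specializing, $K_m(\catC) = H^0(C_m(\catC)_{\dg})$ is the stable category of $C_m(\catC)$ and therefore inherits a canonical triangulated structure in which the suspension is $[1]$ and distinguished triangles come from cone diagrams.

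There is no real obstacle here: the preceding proposition has already done the only non-formal work, namely exhibiting shifts and cones with the exact identities demanded by the definition of an exact DG category. The corollary is simply the packaged output of that verification together with the standard passage ``exact DG category $\leadsto$ Frobenius $Z^0$ $\leadsto$ triangulated $H^0$'' recalled earlier.
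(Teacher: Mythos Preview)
Your proposal is correct and matches the paper's approach exactly: the paper simply marks the corollary with \qed and gives no separate proof, so the intended argument is precisely the one you spell out---apply the intrinsic criterion for exact DG categories using the shifts and cones built in the preceding proposition, then invoke the general facts from \S\ref{ss:dg} that $Z^0$ of an exact DG category is Frobenius with stable category $H^0$.
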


We describe conflations on a Frobenius exact category $C_m(\catC)$.
A sequence $\epsilon : U\xr{f} V\xr{g} W$ in $C_m(\catC)$
is a conflation if and only if $\epsilon$ is a graded split sequence.
Since a morphism of $C_m(\catC)_{\dg}^0$ is a degree-preserving map,
$\epsilon$ is a graded split 
if and only if $U^i\xr{f^i} V^i\xr{g^i} W^i$ is a split exact sequence
for all $i\in\bbZ_m$.
Next, we describe the projective objects in $C_m(\catC)$.
\begin{dfn}
Let $\catC$ be an additive category.
For each object $A\in\catC$,
define an $m$-periodic complex $K_A \in C_m(\catC)$ as follows.
\[
K_A^i:=
\begin{cases}
A& (i=m-1, 0) \\
0& (i\ne m-1, 0)
\end{cases},\quad
d_{K_A}^i:=
\begin{cases}
\id_A& (i=m-1) \\
0& (i\ne m-1)
\end{cases}
\]
for $m\ge 2$, and
\[
K_A^0:= A\oplus A,\quad
d_{K_A}^0:=
\begin{bmatrix}
0& \id_A\\
0& 0
\end{bmatrix}
\]
for $m=1$. 
\end{dfn}

\begin{lem}\label{lem:contractible periodic}
Let $\catC$ be an additive category, and $A\in\catC$.
\begin{enua}
\item
$K_A$ is a contractible object.
\item
$K_A$ is a projective object of $C_m(\catC)$ 
as an exact category with the graded split exact structure.
\item
$K_A\simeq 0$ in $K_m(\catC)$.
\end{enua}
\end{lem}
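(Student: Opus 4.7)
The plan is to prove (1) directly by writing down an explicit contracting homotopy, and then derive (2) and (3) as formal consequences, using the general characterization stated just before the lemma (projective objects in $Z^0(\catA)$ of an exact DG category are exactly the contractible objects, and a contractible object becomes zero in $H^0$).

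First I would construct the homotopy for $m \geq 2$. By the differential formula in $C_m(\catC)_{\dg}$, a degree $-1$ endomorphism $h$ of $K_A$ satisfies $d(h) = d_{K_A}\circ h + h\circ d_{K_A}$. Define $h^0 := \id_A : K_A^0 = A \to A = K_A^{m-1}$ and $h^i := 0$ for $i \neq 0$. A routine componentwise check shows $d(h)^0 = d_{K_A}^{m-1}\circ h^0 = \id_A$ and $d(h)^{m-1} = h^0 \circ d_{K_A}^{m-1} = \id_A$, while $d(h)^i = 0 = \id_{K_A^i}$ for the remaining indices. For $m=1$, the analogous homotopy is
\[
h^0 := \begin{bmatrix} 0 & 0 \\ \id_A & 0 \end{bmatrix} : A\oplus A \longrightarrow A\oplus A,
\]
and a direct matrix computation gives $d_{K_A}^0 h^0 + h^0 d_{K_A}^0 = \id_{A\oplus A}$. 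In both cases $\id_{K_A} \in B(C_m(\catC)_{\dg})(K_A,K_A)$, so $K_A$ is contractible, proving (1).

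For (2), I would apply the intrinsic description of the Frobenius structure on $Z^0$ of an exact DG category recorded just before the lemma: $X \in Z^0(\catA)$ is projective iff contractible. Since $C_m(\catC)_{\dg}$ is an exact DG category by Corollary \ref{cor:exact DG periodic} and $Z^0(C_m(\catC)_{\dg}) = C_m(\catC)$, assertion (1) immediately yields that $K_A$ is a projective object of $C_m(\catC)$ with respect to the graded split exact structure.

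For (3), the same formalism gives that the homotopy category $K_m(\catC) = H^0(C_m(\catC)_{\dg})$ is the stable category of $C_m(\catC)$ modulo its projectives, so any contractible object becomes isomorphic to $0$. Alternatively, this is directly visible: $\id_{K_A}$ is an exact morphism by (1), hence zero in $H^0$, hence $K_A \cong 0$ in $K_m(\catC)$. No step here is really hard; the only mild subtlety is getting the signs and indices right in the homotopy for $m\geq 2$ (the wrap-around from degree $0$ to degree $m-1$) and handling the $m=1$ case separately, since then $K_A$ is a single object with a non-trivial square-zero differential rather than a two-term complex.
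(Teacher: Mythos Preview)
Your proof is correct and follows essentially the same approach as the paper: you construct the explicit contracting homotopy (splitting into the cases $m\ge 2$ and $m=1$, with the same matrix $h^0=e_{21}$ in the latter case) to establish (1), and then deduce (2) and (3) formally from the characterization of projective/contractible objects in an exact DG category. The only difference is cosmetic: the paper displays the homotopies as diagrams rather than spelling out the componentwise verification.
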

\begin{proof}
(2) and (3) follows from (1).
We prove (1).
For $m=1$,
consider the following diagram.
\[
\begin{tikzcd}
A\oplus A\arrow[r,"e_{12}"] \arrow[d,equal] & A\oplus A \arrow[ld,"e_{21}"'] \arrow[r,"e_{12}"] \arrow[d,equal] & A\oplus A \arrow[d,equal] \arrow[ld,"e_{21}"']\\
A\oplus A\arrow[r,"e_{12}"] & A\oplus A \arrow[r,"e_{12}"] & A\oplus A,
\end{tikzcd}
\]
where 
$e_{12}=
\begin{bmatrix}
0&1\\
0&0
\end{bmatrix}$,
and
$e_{21}=
\begin{bmatrix}
0&0\\
1&0
\end{bmatrix}$.
Then $\id_{A\oplus A}=e_{12}e_{21}+e_{21}e_{12}=d\left(e_{21}\right)$.
Thus $\id_{K_A}$ is an exact morphism.
For $m\ge 2$,
consider the following diagram.
\[
\begin{tikzcd}
\cdots \arrow[r] & 0 \arrow[r] \arrow[d] & A \arrow[r,equal] \arrow[d,equal] \arrow[ld] & A\arrow[r] \arrow[d,equal] \arrow[ld,equal] & 0 \arrow[r] \arrow[d] \arrow[ld] & \cdots\\
\cdots \arrow[r] & 0 \arrow[r] & A \arrow[r,equal] & A \arrow[r] & 0 \arrow[r] & \cdots.
\end{tikzcd}
\]
This also shows that $\id_{K_A}$ is an exact morphism.
\end{proof}

We reveal the relationship between periodic complexes and usual complexes.
There are two important DG functors 
which relate periodic complexes with usual complexes.
$C(\catC)_{\dg}$ (resp. $C^b(\catC)_{\dg}$) denotes the DG category of
usual (resp. bounded) complexes over $\catC$.
\begin{dfn}
Let $\catC$ be an additive category.
\begin{enua}
\item
A DG functor $\iota : C_m(\catC)_{\dg} \to C(\catC)_{\dg}$
is defined as follows.
\begin{itemize}[nosep]
\item 
For any $V\in C_m(\catC)_{\dg}$ and $i\in\bbZ$,
define
\[
\iota(V)^i:=V^{\ol{i}},\quad
d_{\iota(V)}^i:=d_V^{\ol{i}}
\]
\item
For any $f\in C_m(\catC)_{\dg}(V, W)^p$ and $i\in\bbZ$,
define
\[
\iota(f)^i:=
\left(f^{\ol{i}}: V^{\ol{i}}\to W^{\ol{i+p}}\right) 
\in C(\catC)_{\dg}(\iota(V),\iota(W))^p.
\]
\end{itemize}
\item
A DG functor $\pi : C^b(\catC)_{\dg} \to C_m(\catC)_{\dg}$
is defined as follows.
\begin{itemize}[nosep]
\item 
For any $V\in C^b(\catC)_{\dg}$ and $i\in\bbZ_m$,
define
\[
\pi(V)^i:=\bigoplus_{\ol{j}=i}V^{j},\quad
d_{\pi(V)}^i:= \bigoplus_{\ol{j}=i} d_V^{j}
\]
\item
For any $f\in C^b(\catC)_{\dg}(V, W)^p$ and $i\in\bbZ_m$,
define
\[
\pi(f)^i:=
\left(\bigoplus_{\ol{j}=i} f^{j}:
\bigoplus_{\ol{j}=i}V^j \to \bigoplus_{\ol{j}=i}W^{j+p}\right)
\in C_m(\catC)_{\dg}(\pi(V),\pi(W))^p.
\]
\end{itemize}
\end{enua}
\end{dfn}

By abuse of notation,
we also denote $Z^0(\iota), H^0(\iota)$ (resp. $Z^0(\pi)$, $H^0(\pi)$)
by $\iota$ (resp. $\pi$).

\begin{rmk}
$\iota : C_m(\catC)_{\dg}\to C(\catC)_{\dg}$ is a faithful DG functor.
Hence $\iota :C_m(\catC) \to C(\catC)$ is also a faithful functor,
but $\iota :K_m(\catC) \to K(\catC)$ is not faithful functor in general.
\end{rmk}

\begin{prp}\label{prp:pi Hom}
Let $\catC$ be an additive category,
and $V, W$ bounded complexes.
\begin{enua}
\item
$C_m(\catC)_{\dg}(\pi V,\pi W)
=\bigoplus_{i\in\bbZ}C^b(\catC)_{\dg}(V,W[mi])$ as complexes.
\item
$C_m(\catC)(\pi V,\pi W)
=\bigoplus_{i\in\bbZ}C^b(\catC)(V,W[mi])$.
\item
$K_m(\catC)(\pi V,\pi W)
=\bigoplus_{i\in\bbZ}K^b(\catC)(V,W[mi])$.
\end{enua}
\end{prp}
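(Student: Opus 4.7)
The plan is to prove (1) directly from the definitions of $\pi$ and the DG Hom-complex; parts (2) and (3) then follow formally by applying $Z^0$ and $H^0$ to both sides of (1).

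For (1), I would start by unpacking the definitions. Since $V$ is a bounded complex, for each $\ol{i} \in \bbZ_m$ the sum $(\pi V)^{\ol{i}} = \bigoplus_{\ol{j} = \ol{i}} V^{j}$ is finite, so a degree-$p$ element of $C_m(\catC)_{\dg}(\pi V, \pi W)$ is precisely a family of component maps $f^{j,k} : V^{j} \to W^{k}$ indexed by pairs of integers with $k \equiv j + p \pmod{m}$. I would then partition these pairs by the integer $n := (k - j - p)/m \in \bbZ$; for each fixed $n$, the components $f^{j,\, j+p+mn}$ assemble into a degree-$p$ morphism $V \to W[mn]$ in $C^b(\catC)_{\dg}$ via the tautological identification $W[mn]^{j+p} = W^{j+p+mn}$. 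Boundedness of $V$ and $W$ ensures that for each such degree-$p$ element the set of nonzero components is finite, so the partition yields a genuine direct sum decomposition of graded $\bbk$-modules.

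Next I would verify compatibility with the differentials. Writing $d(f) = d_{\pi W}\circ f - (-1)^p f\circ d_{\pi V}$ and evaluating at the $(j,k)$-component yields $d_{W}^{k-1}\circ f^{j,\, k-1} - (-1)^p f^{j+1,\, k}\circ d_{V}^{j}$, and both of these terms preserve the index $n$, so the decomposition is respected by $d$. The one subtlety is the sign convention $d_{W[mn]}^{i} = (-1)^{mn} d_{W}^{i+mn}$, which produces a stray factor $(-1)^{mn}$ when $mn$ is odd. This is absorbed by composing the block-wise identification with the canonical isomorphism $\pi(W[mn])\simeq \pi(W)$ in $C_m(\catC)$ (realized by the identity when $mn$ is even and by $(-1)^{j}\cdot \id$ on $W^{j}$ when $mn$ is odd). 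The sign bookkeeping here is the main (and only) technical obstacle; once handled, the correspondence is an isomorphism of DG $\bbk$-modules, establishing (1).

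For (2) and (3), I apply $Z^0$ and $H^0$ respectively to both sides of (1). By the defining identities $C_m(\catC) = Z^0(C_m(\catC)_{\dg})$, $C^b(\catC) = Z^0(C^b(\catC)_{\dg})$, $K_m(\catC) = H^0(C_m(\catC)_{\dg})$, and $K^b(\catC) = H^0(C^b(\catC)_{\dg})$, and since both $Z^0$ and $H^0$ commute with direct sums of DG $\bbk$-modules, the isomorphisms in (2) and (3) are immediate consequences of the one in (1).
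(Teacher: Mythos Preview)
Your proof is correct and follows essentially the same approach as the paper: both arguments unwind the definitions and reindex the direct sum of Hom-spaces by writing a degree-$p$ map $\pi V\to\pi W$ as a family of components $V^{q}\to W^{q+p+mi}$ and then summing over $i$. You are in fact more careful than the paper's own proof, which only checks the equality of underlying graded $\bbk$-modules and does not explicitly verify compatibility with differentials; your remark about the sign $(-1)^{mn}$ coming from $d_{W[mn]}$ and its absorption via the canonical isomorphism $\pi(W[mn])\simeq\pi W$ fills exactly the gap the paper leaves implicit.
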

\begin{proof}
(2) and (3) follow from (1).
We prove (1).
In the following calculation,
we freely use the fact 
that $V^i=0$ for only finitely many $i\in\bbZ$.
We have
\begin{align*}
\bigoplus_{i\in\bbZ}C^b(\catC)_{\dg}(V,W[mi])
=\bigoplus_{i\in\bbZ}\bigoplus_{p\in\bbZ}\prod_{q\in\bbZ}
\catC\left(V^q,W^{q+p+mi}\right)
=\bigoplus_{i\in\bbZ}\bigoplus_{p\in\bbZ}\bigoplus_{q\in\bbZ}
\catC\left(V^q,W^{q+p+mi}\right),
\end{align*}
and
\begin{align*}
C_m(\catC)_{\dg}\left(\pi V,\pi W\right)
&=\bigoplus_{p\in\bbZ}\bigoplus_{0\le j \le m-1}
\catC\left(\pi V^{\ol{j}},\pi W^{\ol{j+p}}\right)
=\bigoplus_{p\in\bbZ}\bigoplus_{0\le j \le m-1}
\catC\left(\bigoplus_{k\in\bbZ} V^{j+mk},\bigoplus_{l\in\bbZ} W^{j+p+ml}\right)\\
&=\bigoplus_{p\in\bbZ}\bigoplus_{0\le j \le m-1}\bigoplus_{k\in\bbZ}\bigoplus_{l\in\bbZ}
\catC\left( V^{j+mk}, W^{j+p+ml}\right)
=\bigoplus_{p\in\bbZ}\bigoplus_{q\in\bbZ}\bigoplus_{i\in\bbZ}
\catC\left( V^{q}, W^{q+p+mi}\right).
\end{align*}
In the last equality, we set $q:=j+mk$ and $i:=l-k$.
Hence we obtain the desired equality.
\end{proof}

Let $\catB$ be an abelian category.
Then $C_m(\catB)$ is also an abelian category.
A sequence
\[
0\to U\xr{f} V\xr{g} W\to 0
\]
in $C(\catB)$ is exact if and only if
\[
0\to U^i\xr{f^i} V^i\xr{g^i} W^i\to 0
\]
is exact in $\catB$ for all $i\in\bbZ_m$.

\begin{prp}\label{prp:exact functor}
Let $\catB$ be an abelian category.
\begin{enua}
\item 
$\iota :C_m(\catB) \to C(\catB)$ is an exact functor of abelian categories.
\item
$\pi : C^b(\catB) \to C_m(\catB)$ is an exact functor of abelian categories.
\end{enua}
\end{prp}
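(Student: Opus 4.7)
The plan is to reduce both statements to the componentwise characterization of short exact sequences in $C(\catB)$ and $C_m(\catB)$ recorded just before the proposition: a sequence of complexes (periodic or unbounded) is exact if and only if it is exact in each degree. Both $\iota$ and $\pi$ are manifestly additive (they are DG functors, hence additive on morphism spaces, and they commute with finite direct sums of objects by direct inspection of the definitions), so it suffices to show that they send short exact sequences to short exact sequences.

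For (1), start with an exact sequence $0 \to U \to V \to W \to 0$ in $C_m(\catB)$. By the componentwise criterion, this is equivalent to exactness of $0 \to U^i \to V^i \to W^i \to 0$ in $\catB$ for every $i \in \bbZ_m$. By definition $\iota(V)^i = V^{\ol{i}}$, so the image sequence in $C(\catB)$ has $i$th term $0 \to U^{\ol{i}} \to V^{\ol{i}} \to W^{\ol{i}} \to 0$, which is exact in $\catB$ for every $i \in \bbZ$. Applying the componentwise criterion in $C(\catB)$ in the reverse direction yields exactness of $0 \to \iota U \to \iota V \to \iota W \to 0$.

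For (2), given a short exact sequence $0 \to U \to V \to W \to 0$ in $C^b(\catB)$, the componentwise criterion produces short exact sequences $0 \to U^j \to V^j \to W^j \to 0$ in $\catB$ for every $j \in \bbZ$. The definition gives $\pi(V)^i = \bigoplus_{\ol{j}=i} V^j$, and because $U$, $V$, $W$ are bounded these direct sums are finite. Finite direct sums of short exact sequences in an abelian category remain exact, so $0 \to \pi(U)^i \to \pi(V)^i \to \pi(W)^i \to 0$ is exact for every $i \in \bbZ_m$, whence $0 \to \pi U \to \pi V \to \pi W \to 0$ is exact in $C_m(\catB)$.

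Essentially the entire proof is bookkeeping: the only point requiring care is the use of boundedness in part (2) to ensure the relevant direct sums are finite (so that exactness is preserved without any assumption about exactness of infinite products/coproducts in $\catB$). There is no substantial obstacle; the argument is a straightforward unwinding of the definitions of $\iota$ and $\pi$ together with the componentwise nature of exactness in diagram categories over an abelian category.
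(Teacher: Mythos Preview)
Your proof is correct and follows essentially the same approach as the paper: both arguments reduce to the componentwise description of exactness in categories of (periodic) complexes. The only cosmetic difference is that the paper verifies directly that $\iota$ and $\pi$ commute with kernels and images in each degree, whereas you check preservation of short exact sequences; for additive functors between abelian categories these are equivalent, and your explicit remark about boundedness in part (2) making the direct sums finite is exactly the point the paper uses implicitly when writing $\Ker\bigl(\bigoplus_{\ol{j}=i} f^j\bigr)=\bigoplus_{\ol{j}=i}\Ker(f^j)$.
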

\begin{proof}
(1)
Let $f:V\to W$ be a chain map in $C_m(\catB)$.
Then
we have
\begin{align*}
\Ker(\iota(f))^i
=\Ker(\iota(f)^i)
=\Ker(f^{\ol{i}}),\quad
\Ima(\iota(f))^i
=\Ima(\iota(f)^i)
=\Ima(f^{\ol{i}}),
\end{align*}
for any $i \in \bbZ$.
Hence $\iota$ is an exact functor.

(2)
Let $f:V\to W$ be a chain map in $C^b(\catB)$.
Then
we have
\begin{align*}
\Ker(\pi(f))^i
&=\Ker(\pi(f)^i)
=\Ker\left(\bigoplus_{\ol{j}=i} f^{j}:
\bigoplus_{\ol{j}=i}V^j \to \bigoplus_{\ol{j}=i}W^j\right)
=\bigoplus_{\ol{j}=i}\Ker(f^j:V^j\to W^j),\\
\Ima(\pi(f))^i
&=\bigoplus_{\ol{j}=i}\Ima(f^j:V^j\to W^j)
\end{align*}
for any $i \in \bbZ_m$.
Hence $\pi$ is an exact functor.
\end{proof}

Cohomology theory also work on periodic complexes,
and hence we can define the derived category for periodic complexes.
\begin{dfn}
Let $\catB$ be an abelian category,
$V\in C_m(\catB)$
and $i\in\bbZ_m$.
\begin{enua}
\item
$Z^i(V):=\Ker\left(d_V^i: V^i \to V^{i+1} \right) \in \catB$
is called the $i$th cocycle of $V$.
\item
$B^i(V):=\Ima\left(d_V^i: V^{i-1} \to V^{i} \right) \in \catB$
is called the $i$th coboundary of $V$.
\item
Note that $B^i(V)\subset Z^i(V)$ because $d_V^i\circ d_V^{i-1}=0$.
$H^i(V):=Z^i(V)/B^i(V) \in \catB$
is called the $i$th cohomology of $V$.
\end{enua}
\end{dfn}

\begin{prp}\label{prp:long exact periodic}
Let $\catB$ be an abelian category.
\begin{enua}
\item
$Z^i(\iota (V)) = Z^{\ol{i}}(V)$,
$B^i(\iota (V)) = B^{\ol{i}}(V)$,
and $H^i(\iota (V)) = H^{\ol{i}}(V)$
holds for any $V\in C_m(\catB)$ and $i\in\bbZ$.
In particular,
$Z^{\ol{i}}(-)$, $B^{\ol{i}}(-)$ and $H^{\ol{i}}(-)$ 
define functors $C_m(\catB) \to \catB$,
and $H^{\ol{i}}(-)$ defines a functor $K_m(\catB) \to \catB$.
\item
$H^i(\pi(V))=\bigoplus_{\ol{j}=i}H^j(V)$ for any $V\in C^b(\catB)$ and $i\in \bbZ_m$
\item
$H^i(-) : K_m(\catB) \to \catB$ is a cohomological functor.
\item
Let $0\to U\xr{f} V\xr{g} W\to0$ be an exact sequence in $C_m(\catB)$.
Then there exists a natural morphism $\delta^i :H^i(W)\to H^{i+1}(U)$
such that a sequence
\[
H^{i-1}(W) \xr{\delta^{i-1}} H^i(U) \xr{H^i(f)} H^i(V) \xr{H^i(g)} H^i(W) \xr{\delta^i} H^{i+1}(U)
\]
is exact for all $i\in\bbZ_m$.
\end{enua}
\end{prp}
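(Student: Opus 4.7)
The plan is to reduce all four parts to the standard statements for usual (non-periodic) complexes via the functor $\iota$, together with some direct bookkeeping.

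First I would handle (1) by unfolding the definition of $\iota$. Because $\iota(V)^i = V^{\ol{i}}$ and $d^i_{\iota(V)} = d_V^{\ol{i}}$, the submodules $\Ker d^i_{\iota V}$ and $\Ima d^{i-1}_{\iota V}$ of $\iota(V)^i = V^{\ol i}$ coincide setwise with $Z^{\ol{i}}(V)$ and $B^{\ol{i}}(V)$; taking the quotient yields $H^i(\iota V) = H^{\ol{i}}(V)$. A chain map in $C_m(\catB)$ commutes with differentials and hence preserves cocycles and coboundaries, giving functoriality $C_m(\catB) \to \catB$. The descent to $K_m(\catB)$ is the usual fact: if $f-g = d_W h + h d_V$ for some $h$ of degree $-1$, then for any $z \in Z^{\ol i}(V)$ we have $(f-g)(z) = d_W(h(z)) \in B^{\ol i}(W)$, so $H^{\ol i}(f) = H^{\ol i}(g)$. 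Part (2) is equally direct: since $V$ is bounded only finitely many summands occur and both $\Ker$ and $\Ima$ commute with finite direct sums, hence $Z^i(\pi V) = \bigoplus_{\ol j = i} Z^j(V)$ and similarly for $B$, giving the stated decomposition of $H^i(\pi V)$.

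Next I would prove (4) by transferring the classical long exact cohomology sequence along $\iota$. By Proposition \ref{prp:exact functor}(1) the functor $\iota : C_m(\catB) \to C(\catB)$ is exact, so from a short exact sequence $0 \to U \to V \to W \to 0$ in $C_m(\catB)$ we obtain a short exact sequence $0 \to \iota U \to \iota V \to \iota W \to 0$ in $C(\catB)$. The classical snake-lemma construction supplies connecting morphisms $\delta^i \colon H^i(\iota W) \to H^{i+1}(\iota U)$ and a long exact sequence. Applying part (1) rewrites this as a long exact sequence in $H^{\ol{i}}$, and since the indices $\ol i \in \bbZ_m$ cycle it closes up into the desired exact pentagon (or exact $3m$-term cycle). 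The connecting morphism is periodic in $i$ because $d$ is, so it descends to $\delta^{\ol i} \colon H^{\ol i}(W) \to H^{\ol i + 1}(U)$.

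Finally, for (3) I would use the fact that $K_m(\catB)$ is triangulated via the Frobenius structure of $C_m(\catB)$ (Corollary \ref{cor:exact DG periodic}), so every triangle is isomorphic to a standard cone triangle $V \xr{f} W \to \Cone(f) \to V[1]$ coming from a chain map $f$. The sequence $0 \to W \xr{i_f} \Cone(f) \xr{p_f} V[1] \to 0$ is graded split (and hence exact in the abelian category $C_m(\catB)$) by the cone construction, so part (4) supplies a long exact sequence in $H^{\ol i}$ that is naturally the sequence associated with this triangle; combined with the naturality of $\delta$ this shows $H^{\ol i}$ is a cohomological functor on $K_m(\catB)$.

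No step is a genuine obstacle; the only point requiring care is verifying that the connecting morphism in (4) is well-defined as a morphism indexed by $\bbZ_m$ rather than $\bbZ$, which is immediate from the periodicity of the differential and the identification $H^i(\iota V) = H^{\ol i}(V)$ of (1).
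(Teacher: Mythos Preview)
Your proposal is correct. The treatment of (1) and (2) matches the paper's exactly (the paper says (1) is clear and carries out the same direct computation for (2)).

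For (3) and (4) you reverse the paper's order of dependence. The paper proves (3) directly by observing that $H^{\ol i}(-) = H^i(\iota(-))$ factors as the triangulated functor $\iota \colon K_m(\catB) \to K(\catB)$ followed by the ordinary cohomological functor $H^i$ on $K(\catB)$; it then defers (4) to the later Proposition~\ref{prp:delta}, where a short exact sequence in $C_m(\catB)$ is shown to induce a distinguished triangle in $D_m(\catB)$ via the quasi-isomorphism $\Cone(f) \to W$, after which applying the already-established cohomological functor $H^{\ol i}$ yields the long exact sequence. You instead prove (4) first by applying the exact functor $\iota$ on abelian categories and invoking the classical snake-lemma long exact sequence, then deduce (3) by realizing an arbitrary triangle as a cone and feeding the graded-split short exact sequence $0 \to W \to \Cone(f) \to V[1] \to 0$ into (4). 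Both routes are standard; yours is more self-contained (no forward reference to Proposition~\ref{prp:delta}), while the paper's argument for (3) is slightly cleaner once one accepts that a DG functor between exact DG categories induces a triangulated functor on homotopy categories.
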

\begin{proof}
(1) is clear.
(3) follows from (1) and 
the fact that $H^i(-):K(\catB) \to B$ is a cohomological functor for usual complexes.
See Proposition \ref{prp:delta} below for (4).
We will prove (2).
Since we have
\begin{align*}
Z^i(\pi(V))
&=\Ker(d^i_{\pi(V)})
=\Ker\left( \bigoplus_{\ol{j}=i} d_V^{j}:
\bigoplus_{\ol{j}=i}V^j \to \bigoplus_{\ol{j}=i}V^{j+1} \right)\\
&=\bigoplus_{\ol{j}=i}\Ker\left( d^j: V^j \to V^{j+1} \right)
=\bigoplus_{\ol{j}=i}Z^j(V),\\
B^i(\pi(V))
&=\bigoplus_{\ol{j}=i}B^j(V)
\end{align*}
for any $i\in\bbZ_m$.
Hence we obtain
\[
H^i(\pi(V))=\bigoplus_{\ol{j}=i}H^j(V)
\]
\end{proof}

\begin{dfn}\label{dfn:periodic acyclic}
Let $\catB$ be an abelian category.
\begin{enua}
\item
$V\in C_m(\catB)$ is \emph{acyclic}
if $H^i(V)=0$ for all $i\in \bbZ_m$.
\item
A chain map $f:V\to W$ in $C_m(\catB)$ is a \emph{quasi-isomorphism}
if $H^i(f):H^i(V)\to H^i(W)$ is an isomorphism for all $i\in\bbZ_m$.
\item
$K_{m,a}(\catB)$ is the full subcategory of $K_m(\catB)$
consisting of acyclic $m$-periodic complexes.
\end{enua}
\end{dfn}

\begin{dfn}\label{dfn:periodic derived}
The \emph{$m$-periodic derived category} of an abelian category $\catB$
is the Verdier quotient $K_m(\catB)/K_{m,a}(\catB)$.
\end{dfn}

\begin{lem}\label{lem:qis cone periodic}
Let $\catB$ be an abelian category.
\begin{enua}
\item 
$K_{m,a}(\catB)$ is a thick subcategory of $K_m(\catB)$.
\item 
A chain map $f:V\to W$ in $C(\catB)$ is a quasi-isomorphism
if and only if
$\Cone(f)$ is acyclic.
\item 
If $V\in C^b(\catB)$ (resp. $C_m(\catB)$) is acyclic,
then $\pi(V)\in C_m(\catB)$ (resp. $\iota(V)\in C(\catB)$) is also acyclic.
\item 
If $f:V\to W$ in $C^b(\catB)$ (resp.\ $C_m(\catB)$) is a quasi-isomorphism,
then $\pi(f)$ (resp.\ $\iota(f)$) is also a quasi-isomorphism.
\end{enua}
\end{lem}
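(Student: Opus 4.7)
The plan is to dispatch the four parts in order, using the long exact sequence of Proposition \ref{prp:long exact periodic}(4) as the main engine, together with the compatibility formulas $H^i(\iota V)=H^{\ol i}(V)$ and $H^i(\pi V)=\bigoplus_{\ol j=i}H^j(V)$ from Proposition \ref{prp:long exact periodic}(1)(2).

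For (1), I would verify closure under shifts, cones, and direct summands. Closure under shifts is immediate since $H^i(V[1])=H^{i+1}(V)$ in $\bbZ_m$. For cones, given a triangle $U\to V\to W\to U[1]$ with $U,V\in K_{m,a}(\catB)$, the long exact sequence from Proposition \ref{prp:long exact periodic}(4) forces $H^i(W)=0$ for every $i\in\bbZ_m$; by the same token, closure under arbitrary vertices of a triangle whose other two are acyclic holds. For direct summands, additivity of $H^i$ gives $H^i(V)\oplus H^i(V')\simeq H^i(V\oplus V')=0$.

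For (2), I would apply (1) and Proposition \ref{prp:long exact periodic}(4) to the triangle $V\xr{f}W\to \Cone(f)\to V[1]$ in $K_m(\catB)$. The resulting $m$-periodic long exact sequence in cohomology makes ``$H^i(f)$ is an isomorphism for all $i\in\bbZ_m$'' equivalent to ``$H^i(\Cone(f))=0$ for all $i\in\bbZ_m$''.

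For (3), I would argue by direct computation from Proposition \ref{prp:long exact periodic}(1)(2). If $V\in C^b(\catB)$ is acyclic, then $H^j(V)=0$ for every $j\in\bbZ$, hence $H^i(\pi V)=\bigoplus_{\ol j=i}H^j(V)=0$ for every $i\in\bbZ_m$. If $V\in C_m(\catB)$ is acyclic, then $H^i(\iota V)=H^{\ol i}(V)=0$ for every $i\in\bbZ$. For (4), I would combine (2), (3), and the fact that $\pi$ and $\iota$, being DG functors, commute with shifts and cones on closed degree-$0$ morphisms: thus $\Cone(\pi f)=\pi\Cone(f)$ and $\Cone(\iota f)=\iota\Cone(f)$. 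Since $f$ being a quasi-isomorphism is equivalent to $\Cone(f)$ being acyclic (by (2) in the periodic case, and the classical analogue for $C^b(\catB)$), we pass acyclicity across $\pi$ and $\iota$ by (3) and then back to quasi-isomorphism of $\pi f$ or $\iota f$.

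No step here looks genuinely hard; the only point that requires a small care is checking that in (4) the cone construction really commutes with $\pi$ and $\iota$ on the nose, which is a direct inspection of the explicit formulas $C_f^i=W^i\oplus V^{i+1}$ together with the componentwise definitions of $\pi$ and $\iota$. Once that compatibility is noted, the whole lemma reduces to bookkeeping with cohomology in $\bbZ_m$-degrees.
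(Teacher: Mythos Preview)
Your proposal is correct and follows the same line as the paper, which simply says ``It follows from Proposition \ref{prp:long exact periodic}.'' You have merely unpacked what that one-line reference means. One small remark: for parts (1) and (2) you cite Proposition \ref{prp:long exact periodic}(4), but the cleaner citation is (3), since you are feeding \emph{triangles} in $K_m(\catB)$ into $H^i$, and (3) is precisely the statement that $H^i$ is cohomological; (4) concerns short exact sequences in $C_m(\catB)$, which also works here (cone triangles are graded-split short exact sequences) but is one step removed.
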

\begin{proof}
It follows from Proposition \ref{prp:long exact periodic}.
\end{proof}

\begin{cor}
The $m$-periodic derived category $D_m(\catB)$ of an abelian category $\catB$
is the localization of $K_m(\catB)$ with respect to quasi-isomorphisms.
\qed
\end{cor}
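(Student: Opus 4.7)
The plan is to unwind the definition of the Verdier quotient and match it up with the notion of quasi-isomorphism. Recall that for a thick subcategory $\catN$ of a triangulated category $\catT$, the Verdier quotient $\catT/\catN$ is constructed as a localization of $\catT$ at the multiplicative system
\[
 S_{\catN} := \{\, s \in \Mor(\catT) \mid \Cone(s) \in \catN \,\}.
\]
By Lemma \ref{lem:qis cone periodic}(1), $K_{m,a}(\catB)$ is a thick subcategory of $K_m(\catB)$, so this general construction applies and $D_m(\catB) = K_m(\catB)/K_{m,a}(\catB)$ is the localization of $K_m(\catB)$ at $S_{K_{m,a}(\catB)}$.

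The remaining step is to identify $S_{K_{m,a}(\catB)}$ with the class of quasi-isomorphisms in $K_m(\catB)$. But this is exactly the content of Lemma \ref{lem:qis cone periodic}(2): a chain map $f : V \to W$ in $C_m(\catB)$ is a quasi-isomorphism if and only if $\Cone(f)$ is acyclic, i.e. if and only if $\Cone(f) \in K_{m,a}(\catB)$. Since being a quasi-isomorphism depends only on the homotopy class (by Proposition \ref{prp:long exact periodic}(3), as $H^{\ol{i}}$ descends to $K_m(\catB)$), the same equivalence holds for morphisms of $K_m(\catB)$. Therefore $S_{K_{m,a}(\catB)}$ coincides with the class of quasi-isomorphisms, and the two localizations agree.

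There is essentially no obstacle here: the statement is a direct bookkeeping consequence of Definition \ref{dfn:periodic derived} combined with Lemma \ref{lem:qis cone periodic}. The only point worth flagging is that one is implicitly using that the class of quasi-isomorphisms in $K_m(\catB)$ already forms a saturated multiplicative system, which is automatic once we know $K_{m,a}(\catB)$ is thick, since $S_{K_{m,a}(\catB)}$ always does.
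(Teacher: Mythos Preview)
Your argument is correct and is exactly the intended one: the paper marks this corollary with a bare \qed, treating it as an immediate consequence of Definition~\ref{dfn:periodic derived} together with Lemma~\ref{lem:qis cone periodic}, and you have simply spelled out the standard Verdier-quotient reasoning behind that. There is nothing to add.
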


By the universal property of Verdier quotients,
$\pi$ and $\iota$ induce triangulated functors
$D^b(\catB) \to D_m(\catB)$ and $D_m(\catB)\to D(\catB)$, respectively.
We also denote it by $\pi$ (resp. $\iota$).

\begin{dfn}
Let $\catB$ be an abelian category.
\begin{enua}
\item
$V\in C_m(\catB)$ is \emph{$K$-projective}
if $K_m(\catB)(V,W)=0$ for any acyclic $W\in C_m(\catB)$.
\item
$K_{m,p}(\catB)$ denotes the full subcategory of $K_m(\catB)$
consisting of $K$-projective $m$-periodic complexes.
\end{enua}
\end{dfn}

Note that $K_{m,p}(\catB)$ is a thick subcategory of $K_m(\catB)$.

\begin{prp}
Let $\catB$ be an abelian category.
The following are equivalent for $V\in C_m(\catB)$.
\begin{enua}
\item
$V$ is $K$-projective.
\item
The natural morphism $K_m(\catB)(V,W) \to D_m(\catB)(V,W)$
is an isomorphism for all $W\in C_m(\catB)$.
\end{enua}
\end{prp}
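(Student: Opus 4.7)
The plan is to use the calculus of fractions for the Verdier quotient $D_m(\catB) = K_m(\catB)/K_{m,a}(\catB)$ together with $K$-projectivity of $V$. The key observation is that $K$-projectivity forces every quasi-isomorphism into $V$ to split on the left in $K_m(\catB)$.

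First, the direction $(2)\Rightarrow(1)$ is immediate: if $W$ is acyclic, then $W\simeq 0$ in $D_m(\catB)$, so $D_m(\catB)(V,W)=0$, and by the isomorphism in $(2)$ we get $K_m(\catB)(V,W)=0$.

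For $(1)\Rightarrow(2)$, I would first prove the following splitting lemma: \emph{if $V$ is $K$-projective and $s:V'\to V$ is a quasi-isomorphism in $K_m(\catB)$, then there exists $h:V\to V'$ in $K_m(\catB)$ with $s\circ h=\id_V$.} To see this, take the exact triangle $V'\xrightarrow{s}V\to \Cone(s)\to V'[1]$ in $K_m(\catB)$. By Lemma \ref{lem:qis cone periodic}(2), $\Cone(s)$ is acyclic, hence so is $\Cone(s)[-1]$. Applying the cohomological functor $K_m(\catB)(V,-)$ and using that $K_m(\catB)(V,\Cone(s))=0=K_m(\catB)(V,\Cone(s)[-1])$ by $K$-projectivity, we deduce that $s_*:K_m(\catB)(V,V')\to K_m(\catB)(V,V)$ is an isomorphism, so $\id_V$ lifts uniquely to the desired $h$.

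Using this lemma I would prove that $K_m(\catB)(V,W)\to D_m(\catB)(V,W)$ is bijective. \textbf{Surjectivity}: any morphism in $D_m(\catB)(V,W)$ is represented by a roof $V\xleftarrow{s}V'\xrightarrow{f}W$ with $s$ a quasi-isomorphism; composing with the section $h$ gives an honest morphism $f\circ h:V\to W$ in $K_m(\catB)$, and the equality $s\circ h=\id_V$ in $K_m(\catB)$ shows that this honest morphism represents the same roof in $D_m(\catB)$. \textbf{Injectivity}: if $f:V\to W$ in $K_m(\catB)$ becomes zero in $D_m(\catB)$, then by the standard calculus of fractions for Verdier quotients there exists a quasi-isomorphism $s:V'\to V$ with $f\circ s=0$ in $K_m(\catB)$; applying the splitting lemma and composing with $h$ yields $f=f\circ s\circ h=0$ in $K_m(\catB)$.

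The main obstacle is a purely notational one: checking that the roof/calculus-of-fractions description of morphisms in the Verdier quotient $K_m(\catB)/K_{m,a}(\catB)$ applies in this setting; but this is automatic because $K_{m,a}(\catB)$ is a thick subcategory (Lemma \ref{lem:qis cone periodic}(1)) of a triangulated category, which is precisely the hypothesis of the Verdier quotient construction. Once that is granted, the argument is a formal consequence of $K$-projectivity and the triangulated structure on $K_m(\catB)$.
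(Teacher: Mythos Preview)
Your proof is correct and follows essentially the same approach as the paper: both directions argue identically, and your splitting lemma is exactly the paper's observation that any quasi-isomorphism $s:V'\to V$ is a retraction in $K_m(\catB)$ (the paper deduces this from $V\to\Cone(s)$ being zero, you from the long exact sequence of $K_m(\catB)(V,-)$; these are the same triangulated argument). The calculus-of-fractions steps for injectivity and surjectivity are likewise the same.
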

\begin{proof}
$(1)\Rightarrow (2)$:
Note that any quasi-isomorphism $s:W\to V$ is a retraction.
Indeed,
considering an exact triangle
\[
W\xr{s} V \xr{f} \Cone(s) \xr{g} X[1],
\]
we have $f=0$ since $\Cone(s)$ is acyclic.
Hence $s$ is a retraction.
We will show that $\pi$ is injective.
If $f\in K_m(\catB)(V,W)$ with $\pi(f)=0$,
there exists a morphism $s:V'\to V$ such that $fs=0$.
Since $s$ is a retraction, $f=0$.
We will show that $\pi$ is surjective.
For any morphism $f:V\to W$ in $D_m(\catB)$,
there exists a morphism $g:Z\to W$ in $K_m(\catB)$
and a quasi-isomorphism $s:Z\to V$,
$f=\pi(g)\pi(s)^{-1}$.
Since $s$ is a retraction, there exists a morphism $u:V\to Z$ such that $su=\id_T$.
Hence $f=\pi(g)\pi(s)^{-1}=\pi(g)\pi(u)=\pi(gu)$.

$(2)\Rightarrow (1)$:
For any acyclic $m$-periodic complex $W$, we have
\[
K_m(\catB)(V, W)
\simeq D_m(\catB)(V,W)
\simeq D_m(V,0)
=0.
\]
\end{proof}

As in the case of the usual derived category,
the natural functor $\catB \to D_m(\catB)$
is a $\delta$-functor.
A \emph{$\delta$-functor} $\catB\to\catT$
from an abelian category $\catB$ to a triangulated category $\catT$
is a pair $(F,\delta)$
of a functor $F:\catA\to\catT$
and functorial morphisms $\delta_{W,U}:\Ext^1_{\catA}(W,U)\to \catT(FW,\Sg FU)$
for $U,W\in\catB$
such that
for any exact sequence 
$\epsilon : 0\to U \xr{f}V\xr{g}W\to0$ in $\catB$,
\[
FU\xr{Ff}FV\xr{Fg}FW\xr{\delta(\epsilon)}\Sg FU
\]
is an exact triangle in $\catT$.
If no ambiguity can arise,
we will often say that $F:\catB \to \catT$ is a $\delta$-functor.

\begin{prp}\label{prp:delta}
Let $\catB$ be an abelian category.
Then the natural functor $C_m(\catB) \to D_m(\catB)$ is a $\delta$-functor.
\end{prp}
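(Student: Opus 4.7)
The plan is to construct the connecting morphism via the cone of $f$ and then to verify the $\delta$-functor axioms. Let $\epsilon\colon 0\to U\xr{f} V\xr{g} W\to 0$ be an exact sequence in $C_m(\catB)$. The cone of $f$, viewed in the exact DG category $C_m(\catB)_{\dg}$, produces the exact triangle
\[
U\xr{f} V\xr{i_f}\Cone(f)\xr{p_f}U[1]
\]
in $K_m(\catB)$, hence also in $D_m(\catB)$. The idea is to identify $\Cone(f)$ with $W$ in $D_m(\catB)$ in a canonical way induced by $g$, and then to transport $p_f$ along this identification to define $\delta(\epsilon)$.

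First, I define a morphism $\psi_\epsilon \colon \Cone(f)\to W$ in $C_m(\catB)$ by
\[
\psi_\epsilon^i := (g^i,\ 0)\colon V^i\oplus U^{i+1}\longto W^i.
\]
A direct matrix calculation using $g\circ f = 0$ shows that $\psi_\epsilon$ is a chain map and that $\psi_\epsilon\circ i_f = g$. The central technical step is to prove that $\psi_\epsilon$ is a quasi-isomorphism. To that end, consider the short exact sequence
\[
0\to K\longto \Cone(f)\xr{\psi_\epsilon} W\to 0
\]
in the abelian category $C_m(\catB)$, whose kernel is $K^i = \Ima(f^i)\oplus U^{i+1} \subset V^i\oplus U^{i+1}$. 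Using injectivity of $f$, the assignment $\alpha\colon \Cone(\id_U)\to K$, $(u_1,u_2)\mapsto(f(u_1),u_2)$, is an isomorphism of $m$-periodic complexes, and $\Cone(\id_U)$ is contractible via the standard null-homotopy $s(u_1,u_2):=(0,u_1)$. Hence $K$ is acyclic. The snake lemma in the abelian category $C_m(\catB)$ then yields the usual long exact cohomology sequence (this is in fact Proposition \ref{prp:long exact periodic}(4), obtainable independently from the snake lemma), which forces $H^i(\psi_\epsilon)$ to be an isomorphism for every $i\in\bbZ_m$.

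With $[\psi_\epsilon]$ now invertible in $D_m(\catB)$, I set
\[
\delta(\epsilon) := [p_f]\circ [\psi_\epsilon]^{-1}\colon W\longto U[1].
\]
The commutative diagram in $D_m(\catB)$
\[
\begin{tikzcd}
U \arrow[r,"{f}"] \arrow[d,equal] & V \arrow[r,"{i_f}"] \arrow[d,equal] & \Cone(f) \arrow[r,"{p_f}"] \arrow[d,"{[\psi_\epsilon]}"] & U[1] \arrow[d,equal] \\
U \arrow[r,"{f}"] & V \arrow[r,"{g}"] & W \arrow[r,"{\delta(\epsilon)}"] & U[1]
\end{tikzcd}
\]
whose third vertical arrow is an isomorphism, exhibits the lower row as isomorphic to the cone triangle in the top row, so the lower row is an exact triangle in $D_m(\catB)$.

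Finally, functoriality of $\delta$ in the exact sequence $\epsilon$ follows from the functoriality of the cone construction: any morphism $\epsilon\to\epsilon'$ of short exact sequences induces a chain morphism $\Cone(f)\to\Cone(f')$ compatible with both $p_f$ and $\psi_\epsilon$. In particular, equivalent extensions produce the same $\delta$, so $\delta$ descends to a natural transformation $\Ext^1_{C_m(\catB)}(-,-)\Rightarrow D_m(\catB)(-,(-)[1])$. The principal obstacle is the quasi-isomorphism statement for $\psi_\epsilon$; once this is established, the remaining verifications are formal consequences of the cone construction.
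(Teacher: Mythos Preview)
Your proof is correct and follows essentially the same approach as the paper: the map you call $\psi_\epsilon=(g,0)$ is precisely the map $\phi$ that the paper obtains from its push-out diagram, and both arguments establish the quasi-isomorphism by identifying the kernel with the contractible complex $\Cone(\id_U)$. Your version is in fact slightly more thorough, since you explicitly verify naturality of $\delta$ and avoid the apparent forward reference to Proposition~\ref{prp:long exact periodic}(4) by noting that the long exact cohomology sequence comes directly from the snake lemma.
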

\begin{proof}
Consider the following commutative diagram in $C_m(\catB)$.
\[
\begin{tikzcd}
&0 \arrow[d] &0 \arrow[d] &&\\
0 \arrow[r] & U \po \arrow[r] \arrow[d,"f"'] & \Cone(\id_U) \arrow[r] \arrow[d] &U[1] \arrow[r] \arrow[d,equal] &0\\
0 \arrow[r] & V \arrow[r,"i_f"'] \arrow[d,"g"'] & \Cone(f) \arrow[r,"p_f"'] \arrow[d,"\phi"] & U[1] \arrow[r] & 0\\
&W \arrow[r,equal] \arrow[d] &W \arrow[d] \\
&0&0
\end{tikzcd},
\]
where vertical and horizontal sequences are exact.
Since $\Cone(\id_U)$ is acyclic,
$\phi : \Cone(f) \to W$ is a quasi-isomorphism.
Hence in $D_m(\catB)$
\[
U\xr{f} V\xr{g} W \xr{p_f\phi^{-1}} U[1]
\]
is isomorphic to an exact triangle
\[
U\xr{f} V\xr{i_f} \Cone(f) \xr{p_f} U[1].
\]
\end{proof}

\subsection{Periodic complexes over an abelian category of finite global dimension}\label{ss:periodic fin global}
In this subsection, 
$\catB$ is an abelian category of finite global dimension $d$,
which has enough projective $\catP$.
We will prove that
$K(\catB)$ has ``enough $K$-projective'' in this case (Proposition \ref{prp:K-proj wtP} and \ref{prp:enough K-proj periodic} ).
Consequently, we prove the natural functor $K_m(\catP)\to D_m(\catB)$ is an equivalence
(Corollary \ref{cor:proj periodic}).

We first consider $K_m(\catP)$ and 
observe that it behaves like the bounded homotopy category $K^b(\catP)$.
We give a characterization of projective objects in $C_m(\catP)$.
\begin{lem}[c.f. {\cite[Lemma 3.2.]{Br13}}]\label{lem:proj acy}
The following are equivalent for $V\in C_m(\catP)$.
\begin{enua}
\item
$V$ is a contractible object.
\item
$V$ is a projective object of $C_m(\catP)$.
\item
$V\simeq 0$ in $K_m(\catP)$.
\item
$V\in C_m(\catB)$ is acyclic.
\item
There exist $P_1,\dots, P_n \in \catP$
and $\ell_1, \dots, \ell_n \in \bbZ$
such that
\[
V \simeq \bigoplus_{1\le i \le n} K_{P_i}[\ell_i]
\]
in $C_m(\catB)$.
\end{enua}
\end{lem}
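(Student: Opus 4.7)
The plan is to verify $(1) \Rightarrow (3) \Rightarrow (4) \Rightarrow (5) \Rightarrow (1)$ cyclically, together with the separate equivalence $(1) \Leftrightarrow (2)$. Since Corollary \ref{cor:exact DG periodic} exhibits $C_m(\catP)_{\dg}$ as an exact DG category whose $Z^0$ is the Frobenius category $C_m(\catP)$, the intrinsic characterization recalled at the end of \S\ref{ss:dg} (``projective $=$ contractible in $Z^0(\catA)$'') gives $(1) \Leftrightarrow (2)$ at once. The implication $(1) \Rightarrow (3)$ is the tautology that a contractible object becomes zero in $H^0$. The implication $(3) \Rightarrow (4)$ uses that the cohomology $H^i : K_m(\catB) \to \catB$ is a cohomological (in particular additive) functor by Proposition \ref{prp:long exact periodic}, hence sends zero objects to zero. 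The implication $(5) \Rightarrow (1)$ follows from Lemma \ref{lem:contractible periodic}(1) together with the stability of contractibility under shifts and direct sums.

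The key step is $(4) \Rightarrow (5)$. Given an acyclic $V \in C_m(\catP)$, acyclicity produces short exact sequences
\[
0 \longrightarrow Z^i(V) \longrightarrow V^i \xrightarrow{\ d_V^i\ } Z^{i+1}(V) \longrightarrow 0
\]
in $\catB$ for each $i \in \bbZ_m$. Splicing the unrolled sequences of $\iota(V) \in C(\catB)$ yields, for every $i$, an infinite projective resolution $\cdots \to V^{i-1} \to V^i \to Z^{i+1}(V) \to 0$ of $Z^{i+1}(V)$ whose $k$th syzygy is $Z^{i+1-k}(V)$. Since $\catB$ has global dimension $d$, we have $\pd_{\catB} Z^{i+1}(V) \le d$, so the $d$th syzygy in any projective resolution is projective; thus $Z^{i+1-d}(V) \in \catP$. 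As $i$ ranges over $\bbZ_m$, the index $i+1-d$ sweeps all of $\bbZ_m$, so every $Z^j(V)$ lies in $\catP$.

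Once every $Z^j(V)$ is projective, each short exact sequence above splits, so I would choose compatible isomorphisms $V^i \simeq Z^i(V) \oplus Z^{i+1}(V)$. Reading off the differentials from these splittings shows that $V$ decomposes as a direct sum, indexed by $j \in \bbZ_m$, of two-term periodic subcomplexes, each consisting of $Z^j(V)$ placed in degrees $j-1$ and $j$ with identity differential between them. Such a subcomplex is isomorphic to the appropriate shift of $K_{Z^j(V)}$, giving the desired decomposition $V \simeq \bigoplus_{j \in \bbZ_m} K_{Z^j(V)}[-j]$, which settles (5). The main obstacle is the syzygy step: the finite global dimension of $\catB$ is essential, since without it the periodic infinite resolution need not terminate, and over a self-injective algebra one finds nontrivial totally acyclic complexes of projectives for which the conclusion fails.
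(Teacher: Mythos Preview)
Your argument is correct and matches the paper's proof in structure: the equivalences $(1)\Leftrightarrow(2)\Leftrightarrow(3)$, the implication $(3)\Rightarrow(4)$ via the cohomology functor, and $(5)\Rightarrow(1)$ via Lemma~\ref{lem:contractible periodic} are handled identically, and for $(4)\Rightarrow(5)$ both you and the paper show $Z^i(V)\in\catP$ using the short exact sequences $0\to Z^i(V)\to V^i\to Z^{i+1}(V)\to 0$ together with finite global dimension, then split to obtain $V\simeq\bigoplus K_{Z^j(V)}[-j]$. The only cosmetic difference is that the paper phrases the projectivity step as the Ext-periodicity $\Ext^j_\catB(Z^i(V),-)\simeq\Ext^{j+m}_\catB(Z^i(V),-)$ forcing vanishing, whereas you phrase the same dimension-shift as a syzygy computation in the unrolled complex $\iota(V)$; these are two formulations of the same argument.
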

\begin{proof}
$(1)\Leftrightarrow (2) \Leftrightarrow (3)$:
See \S \ref{ss:dg}.

$(3)\Rightarrow (4)$:
Since cohomologies are invariant under homotopy equivalence,
$H^i(V)\simeq H^i(0)=0$.

$(4)\Rightarrow (5)$:
For all $i\in \bbZ_m$,
there exists an exact sequence
\begin{align}\label{eq:ses acy}
0\to Z^i(V) \to V^i \to Z^{i+1}(V) \to 0
\end{align}
in $\catB$.
Considering the long exact sequences of $\Ext^j_{\catB}(-,W)$,
we have
\[
\Ext^j_{\catB}\left(Z^i(V),-\right)
\simeq \Ext^{j+1}_{\catB}\left(Z^{i+1}(V),-\right)
\simeq \Ext^{j+m}_{\catB}\left(Z^{i+m}(V),-\right)
= \Ext^{j+m}_{\catB}\left(Z^{i}(V),-\right)
\]
for $j>0$.
Since $\catB$ has finite global dimension,
$\Ext^j_{\catB}\left(Z^i(V),-\right)=0$ for $j\gg 0$.
Thus $Z^i(V)$ is projective for all $i\in \bbZ_m$.
Hence the exact sequence \eqref{eq:ses acy} splits,
and we obtain 
\[
V \simeq \bigoplus_{1\le i \le m} K_{Z^i(V)}[-i]
\]
in $C_m(\catB)$.

$(5)\Rightarrow (1)$
It follows from Lemma \ref{lem:contractible periodic}.
\end{proof}

\begin{prp}\label{prp:qis proj}
The following are equivalent for a chain map $f:V\to W$
in $C_m(\catP)\subset C_m(\catB)$.
\begin{enua}
\item $f$ is a quasi-isomorphism.
\item $f$ is an isomorphism in $K_m(\catB)$.
\item $\Cone(f)$ is acyclic.
\end{enua}
\end{prp}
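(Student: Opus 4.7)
The plan is to prove the cycle $(2) \Rightarrow (1) \Rightarrow (3) \Rightarrow (2)$, leveraging Proposition~\ref{prp:long exact periodic} for the first two implications and Lemma~\ref{lem:proj acy} for the last.

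First I would handle $(2) \Rightarrow (1)$: since $H^i(-) : K_m(\catB) \to \catB$ is a well-defined functor by Proposition~\ref{prp:long exact periodic}(1), any isomorphism in $K_m(\catB)$ goes to an isomorphism under $H^i$, so $f$ is a quasi-isomorphism. Next, for $(1) \Leftrightarrow (3)$ (which is essentially Lemma~\ref{lem:qis cone periodic}(2)), I would consider the short exact sequence
\[
0 \to W \xr{i_f} \Cone(f) \xr{p_f} V[1] \to 0
\]
in $C_m(\catB)$ coming from the cone construction. Applying Proposition~\ref{prp:long exact periodic}(4) gives a long exact cohomology sequence in which the connecting homomorphism is identified with $H^i(f)$ (up to sign). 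From this one reads that $H^i(\Cone(f)) = 0$ for all $i \in \bbZ_m$ if and only if every $H^i(f)$ is an isomorphism.

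The crux is $(3) \Rightarrow (2)$: here I use that $V, W \in C_m(\catP)$ implies $\Cone(f) \in C_m(\catP)$, since $\Cone(f)^i = W^i \oplus V^{i+1}$ is a direct sum of projectives. Now Lemma~\ref{lem:proj acy}, which uses crucially the finite global dimension of $\catB$, tells us that an acyclic object of $C_m(\catP)$ is contractible, hence isomorphic to zero in $K_m(\catP)$ and thus in $K_m(\catB)$. Then the distinguished triangle $V \xr{f} W \to \Cone(f) \to V[1]$ in $K_m(\catB)$ degenerates (its third term vanishes), forcing $f$ to be an isomorphism.

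The main obstacle is the last implication, since without the assumption that both $V$ and $W$ lie in $C_m(\catP)$ the statement would fail (acyclic complexes over a general $\catB$ need not be contractible). The step where finite global dimension enters is exactly the use of Lemma~\ref{lem:proj acy}, where one inductively shows all cocycles $Z^i(\Cone(f))$ are projective by iterating Ext-shifts modulo $m$ and invoking that $\Ext^{\gg 0}$ vanishes. Beyond this, the argument is formal.
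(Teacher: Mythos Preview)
Your proof is correct and follows essentially the same approach as the paper: the paper cites Lemma~\ref{lem:qis cone periodic} for $(1)\Leftrightarrow(3)$, notes $(2)\Rightarrow(1)$ is clear, and for $(3)\Rightarrow(2)$ observes $\Cone(f)\in C_m(\catP)$ and invokes Lemma~\ref{lem:proj acy} to conclude $\Cone(f)\simeq 0$ in $K_m(\catB)$, exactly as you do. Your additional commentary on where finite global dimension enters is accurate and matches the paper's reliance on Lemma~\ref{lem:proj acy}.
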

\begin{proof}
$(1)\Leftrightarrow (3)$:
It is Lemma \ref{lem:qis cone periodic}.

$(2)\Rightarrow (1)$:
It is clear.

$(3)\Rightarrow (2)$:
Suppose $\Cone(f)$ is acyclic.
Note that $\Cone(f)\in C_m(\catP)$.
Then $\Cone(f)=0$ in $K_m(\catB)$ by Lemma \ref{lem:proj acy},
and hence $f:V\to W$ is an isomorphism in $K_m(\catB)$.
\end{proof}

\begin{prp}\label{prp:ex seq tri proj}
Let $0\to U\xr{f} V\xr{g} W\to 0$ be an exact sequence in $C_m(\catB)$
with $U,W\in C_m(\catP)$.
Then there exists $h:W \to U[1]$ such that a triangle
\begin{align}\label{eq:ex tri 1}
U\xr{f} V\xr{g} W \xr{h} U[1]
\end{align}
is exact in $K_m(\catB)$.
\end{prp}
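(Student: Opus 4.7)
The plan is to identify the desired triangle with the cone triangle of $f$ in $C_m(\catB)_{\dg}$, using projectivity of $W$ to make the given short exact sequence graded split.

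Since each $W^i$ lies in $\catP$ and is thus projective in $\catB$, every degree-wise short exact sequence $0 \to U^i \xr{f^i} V^i \xr{g^i} W^i \to 0$ splits. Choosing splittings $s^i : W^i \to V^i$ gives a graded isomorphism $V \simeq U \oplus W$, so the original sequence is graded split; by Corollary \ref{cor:exact DG periodic}(2) it is a conflation in the Frobenius exact structure on $C_m(\catB)$, which is exactly what realizes $K_m(\catB)$ as a triangulated category.

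To make the connecting morphism $h$ explicit, I would compare with the cone of $f$. Define $\phi : \Cone(f) \to W$ by the degree-$i$ component $[g^i, 0] : V^i \oplus U^{i+1} \to W^i$; using $g \circ f = 0$ together with the chain-map property of $g$ one checks $\phi$ is a closed morphism of degree $0$. Its kernel, sitting in degree $i$ as $f^i(U^i) \oplus U^{i+1} \simeq U^i \oplus U^{i+1}$, can be identified (after transporting along the monomorphism $f$ and using $d_V \circ f = f \circ d_U$) with the periodic complex $\Cone(\id_U)$, which is contractible. The induced short exact sequence $0 \to \Cone(\id_U) \to \Cone(f) \xr{\phi} W \to 0$ is again graded split by projectivity of $W^i$, so $\phi$ is a homotopy equivalence.

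Finally, $\phi \circ i_f = g$ by inspection, so setting $h := p_f \circ \phi^{-1} \in K_m(\catB)(W, U[1])$ yields an isomorphism in $K_m(\catB)$ between the cone triangle $U \xr{f} V \xr{i_f} \Cone(f) \xr{p_f} U[1]$ and the candidate triangle $U \xr{f} V \xr{g} W \xr{h} U[1]$; the latter is therefore exact. I anticipate no conceptual obstacle: the only substantive input is the degree-wise splitting afforded by projectivity of $W$, and the remainder is sign-bookkeeping in the cone differential and in the identification $\ker \phi \simeq \Cone(\id_U)$, which should be routine.
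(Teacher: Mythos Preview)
Your proof is correct and follows the same architecture as the paper's: both build the comparison map $\phi:\Cone(f)\to W$ sitting in the short exact sequence $0\to\Cone(\id_U)\to\Cone(f)\xr{\phi}W\to 0$, check $\phi\circ i_f=g$, and set $h:=p_f\circ\phi^{-1}$. The only difference is how you conclude that $\phi$ is invertible in $K_m(\catB)$. The paper observes that $\phi$ is a quasi-isomorphism between objects of $C_m(\catP)$ and invokes Proposition~\ref{prp:qis proj} (which rests on Lemma~\ref{lem:proj acy} and hence on the standing finite-global-dimension hypothesis of \S\ref{ss:periodic fin global}); you instead use directly that $\Cone(\id_U)$ is contractible and that the sequence is graded split by projectivity of $W$. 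Your argument is therefore a bit more self-contained---it needs neither $U\in C_m(\catP)$ nor $\gd\catB<\infty$---while the paper's version keeps the bookkeeping shorter by outsourcing it to the already-established Proposition~\ref{prp:qis proj}.
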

\begin{proof}
Consider the following commutative diagram in $C_m(\catB)$.
\[
\begin{tikzcd}
&0 \arrow[d] &0 \arrow[d] &&\\
0 \arrow[r] & U \po \arrow[r] \arrow[d,"f"'] & \Cone(\id_U) \arrow[r] \arrow[d] &U[1] \arrow[r] \arrow[d,equal] &0\\
0 \arrow[r] & V \arrow[r,"i_f"'] \arrow[d,"g"'] & \Cone(f) \arrow[r,"p_f"'] \arrow[d,"\phi"] & U[1] \arrow[r] & 0\\
&W \arrow[r,equal] \arrow[d] &W \arrow[d] \\
&0&0
\end{tikzcd},
\]
where vertical and horizontal sequences are exact.
Note that all objects in this diagram are in $C_m(\catP)$.
Since $\Cone(\id_U)$ is acyclic,
$\phi : \Cone(f) \to W$ is a quasi-isomorphism,
and hence it is an isomorphism in $K_m(\catB)$ by Proposition \ref{prp:qis proj}.
Thus
\[
U\xr{f} V\xr{g} W \xr{p_f\phi^{-1}} U[1]
\]
is isomorphic to an exact triangle
\[
U\xr{f} V\xr{i_f} \Cone(f) \xr{p_f} U[1].
\]
\end{proof}

Let $\catI$ (resp. $\catJ$) be the class of stalk complexes 
in $C_m(\catB)$ (resp. $C_m(\catP)$).
$\wt{\catB}$ (resp. $\wt{\catP}$) denotes
the smallest extension closed subcategory of an abelian category $C_m(\catB)$ 
containing of $\catI$ (resp. $\catJ$).
Note that $\wt{\catP} \subset C_m(\catP)$

\begin{prp}[c.f. {\cite[Lemma 9.4.]{Go13}}]\label{prp:K-proj wtP}
$\wt{\catP}\subset K_{m,p}(\catB)$ holds.
\end{prp}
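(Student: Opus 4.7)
The plan is to show two things: first, that each stalk complex in $\catJ$ is $K$-projective, and second, that the class of $K$-projective objects inside $C_m(\catP)$ is closed under extensions taken in $C_m(\catB)$. Since $\wt{\catP}$ is by definition the smallest extension-closed subcategory of $C_m(\catB)$ containing $\catJ$, this will yield $\wt{\catP}\subset K_{m,p}(\catB)$. As a preliminary remark, I would note that any extension $0\to U\to V\to W\to 0$ in $C_m(\catB)$ with $U,W\in C_m(\catP)$ automatically has $V\in C_m(\catP)$: at each degree $i$, the sequence $0\to U^i\to V^i\to W^i\to 0$ splits since $W^i\in\catP$ is projective, so $V^i\simeq U^i\oplus W^i\in\catP$. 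Hence $\wt{\catP}\subset C_m(\catP)$, and it is enough to work inside $C_m(\catP)$ throughout.

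For the first point, let $P\in\catP$ and fix $\ell\in\bbZ$, so that $P[\ell]$ is a stalk complex concentrated in degree $-\ell$. Given an acyclic $W\in C_m(\catB)$, a chain map $f\colon P[\ell]\to W$ is determined by its single nonzero component $f^{-\ell}\colon P\to W^{-\ell}$, which satisfies $d_W^{-\ell}\circ f^{-\ell}=0$ and therefore factors through $Z^{-\ell}(W)$. Acyclicity of $W$ gives $Z^{-\ell}(W)=B^{-\ell}(W)$, the image of the epimorphism $d_W^{-\ell-1}\colon W^{-\ell-1}\surj B^{-\ell}(W)$. Projectivity of $P$ then lifts $f^{-\ell}$ to a morphism $h^{-\ell}\colon P\to W^{-\ell-1}$ with $d_W^{-\ell-1}\circ h^{-\ell}=f^{-\ell}$, and this single component defines a homotopy $h$ of degree $-1$ showing $f\sim 0$ in $K_m(\catB)$.

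For the second point, assume $0\to U\to V\to W\to 0$ is exact in $C_m(\catB)$ with $U,V,W\in C_m(\catP)$ and $U,W$ both $K$-projective. By Proposition \ref{prp:ex seq tri proj}, this sequence gives rise to an exact triangle $U\to V\to W\to U[1]$ in $K_m(\catB)$. For any acyclic $X\in C_m(\catB)$, the contravariant cohomological functor $K_m(\catB)(-,X)$ yields an exact sequence
\[
K_m(\catB)(W,X)\longrightarrow K_m(\catB)(V,X)\longrightarrow K_m(\catB)(U,X),
\]
whose outer terms vanish by $K$-projectivity of $U$ and $W$. Hence $K_m(\catB)(V,X)=0$, proving $V\in K_{m,p}(\catB)$.

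The only nontrivial ingredient is the argument for stalk complexes: it depends on the combination of acyclicity (to promote a cocycle to a coboundary) with projectivity of $P$ (to lift through the coboundary surjection). The finite-global-dimension hypothesis on $\catB$ is not used here; it will play its role in the companion result (Proposition \ref{prp:enough K-proj periodic}) that guarantees enough objects of $\wt{\catP}$ to represent arbitrary objects of $D_m(\catB)$.
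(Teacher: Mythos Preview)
Your proof is correct and follows essentially the same approach as the paper. The paper also first verifies that stalk complexes in $\catJ$ are $K$-projective by the same lifting argument, then uses Proposition~\ref{prp:ex seq tri proj} to pass to extensions; the only cosmetic difference is that the paper phrases the second step as the single inclusion $\wt{\catP}\subset \thick_{K_m(\catB)}(\catJ)\subset K_{m,p}(\catB)$ (using that $K_{m,p}(\catB)$ is thick), whereas you spell out the extension-closure directly.
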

\begin{proof}
We first prove that every $V\in \catJ$ is $K$-projective.
We may assume that $V$ is concentrated in degree $0$, and $V^0=P\in\catP$.
For any chain map $f : P \to W$ in $C_m(\catB)$ with $W$ acyclic,
consider the following diagram.
\[
\begin{tikzcd}
\cdots \arrow[r] & 0 \arrow[r] \arrow[d] & P \arrow[r] \arrow[d,"f^0"] \arrow[ld,"g"'] & 0 \arrow[r] \arrow[d] \arrow[ld] & \cdots \\
\cdots \arrow[r] & W^{m-1} \arrow[r,"d_W^{m-1}"'] & W^0 \arrow[r,"d_W^{0}"'] & W^1 \arrow[r] & \cdots ,
\end{tikzcd}
\]
where $g$ is a lift of $f^0$ induced by the projectivity of $P$.
The above diagram shows that $f$ is an exact morphism,
and hence $f=0$ in $K_m(\catB)$. 
Hence we have $\catJ \subset K_{m,p}(\catB)$.
Thus we obtain $\wt{\catP}\subset \thick_{K_m(\catB)}(\catJ)\subset K_{m,p}(\catB)$,
where the first inclusion follows from Proposition \ref{prp:ex seq tri proj}.
\end{proof}

\begin{prp}[c.f. {\cite[Proposition 3.3.]{Go13}}]\label{prp:Ext K-proj}
$\Ext^i_{C_m(\catB)}(P,V) \simto K_m(\catB)(P[-i],V)$
for any $P\in C_m(\catP)$, $V\in C_m(\catB)$ and $i\ge1$.
\end{prp}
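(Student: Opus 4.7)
The plan is to reduce the statement to a dimension-shift via a specific ``projective cover'' of $P$ in the abelian category $C_m(\catB)$, and then to induct on $i \geq 1$. Given $P \in C_m(\catP)$, I would set
\[
 Q := \bigoplus_{j \in \bbZ_m} K_{P^j}[-j-1]
\]
together with the canonical surjection $\phi: Q \surj P$ whose restriction to the $j$-th summand is determined by $\id_{P^j}$ at degree $j$. First I would verify that each $K_A[\ell]$ with $A \in \catP$ is a projective object of the abelian category $C_m(\catB)$ (any chain map out of $K_A[\ell]$ is determined by a single morphism out of $A$, which lifts along a surjection by projectivity of $A$), so that $Q$ is projective in $C_m(\catB)$; by Lemma \ref{lem:contractible periodic} it is also contractible. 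A direct computation shows that $K := \Ker(\phi)$ satisfies $K^j \simeq P^{j-1}$, hence $K \in C_m(\catP)$, and the short exact sequence $0 \to K \to Q \to P \to 0$ is graded split because each $0 \to K^j \to Q^j \to P^j \to 0$ splits (as $P^j \in \catP$).

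Next I would extract two long exact sequences. Applying $\Hom_{C_m(\catB)}(-, V)$ and using $\Ext^{\geq 1}_{C_m(\catB)}(Q, V) = 0$ gives the exact sequence
\[
\Hom_{C_m(\catB)}(Q, V) \to \Hom_{C_m(\catB)}(K, V) \surj \Ext^1_{C_m(\catB)}(P, V),
\]
together with isomorphisms $\Ext^{i-1}_{C_m(\catB)}(K, V) \simto \Ext^i_{C_m(\catB)}(P, V)$ for $i \geq 2$. On the other hand, since $K, P \in C_m(\catP)$, Proposition \ref{prp:ex seq tri proj} yields an exact triangle $K \to Q \to P \to K[1]$ in $K_m(\catB)$; because $Q$ is contractible it vanishes in $K_m(\catB)$, so the resulting long exact sequence degenerates to isomorphisms $K_m(\catB)(K[-(i-1)], V) \simto K_m(\catB)(P[-i], V)$ for every $i \geq 1$, and in particular $K \simeq P[-1]$.

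For the induction step $i \geq 2$, the conclusion follows at once from the inductive hypothesis applied to $K \in C_m(\catP)$:
\[
\Ext^i_{C_m(\catB)}(P, V) \simeq \Ext^{i-1}_{C_m(\catB)}(K, V) \simeq K_m(\catB)(K[-(i-1)], V) \simeq K_m(\catB)(P[-i], V).
\]

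The main obstacle is the base case $i = 1$, which amounts to the identification
\[
\Hom_{C_m(\catB)}(K, V) / \Ima\bigl(\Hom_{C_m(\catB)}(Q, V) \to \Hom_{C_m(\catB)}(K, V)\bigr) \simeq K_m(\catB)(K, V),
\]
i.e.\ to showing that a chain map $h : K \to V$ extends to a chain map on $Q$ if and only if it is null-homotopic. The ``only if'' direction uses contractibility of $Q$: any chain map $\tilde{h}: Q \to V$ is null-homotopic via $\tilde{h}\sigma$, where $\sigma$ is a contracting homotopy of $Q$, and restricting this null-homotopy along the inclusion $K \inj Q$ null-homotopes $h$. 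The ``if'' direction is the delicate one: given a null-homotopy $s: K \to V$ of degree $-1$, extend $s$ as a graded map to $\tilde{s}: Q \to V$ via a graded retract $r: Q \to K$ of the inclusion (which exists precisely because the short exact sequence is graded split), by the formula $\tilde{s} := s \circ r$, and then set $\tilde{h} := d_V \tilde{s} + \tilde{s} d_Q$; a direct check verifies that $\tilde{h}$ is a chain map and $\tilde{h}|_K = h$.
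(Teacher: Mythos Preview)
Your proof is correct and follows essentially the same approach as the paper: your projective object $Q=\bigoplus_{j}K_{P^j}[-j-1]$ is precisely (isomorphic to) the paper's $R_0=\Cone(\id_{P[-1]})$, and your short exact sequence $0\to K\to Q\to P\to 0$ with $K\simeq P[-1]$ is the paper's $0\to P[-1]\to R_0\to P\to 0$. The only difference is packaging: the paper splices these sequences into a full projective resolution $R_\bullet\to P$ and reads off $\Ext^i$ as the $i$th cohomology of $\Hom(R_\bullet,V)$ directly (identifying cocycles with $C_m(\catB)(P[-i],V)$ and coboundaries with maps factoring through $\Cone(\id_{P[-i]})$, i.e.\ null-homotopic maps), whereas you do one step of the resolution and dimension-shift by induction, proving the equivalent base-case identification ``extends to $Q$ $\Leftrightarrow$ null-homotopic'' by hand.
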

\begin{proof}
We first prove that $\Cone(\id_P)$ is 
a projective object of $C_m(\catB)$ as an abelian category.
Let $\epsilon : 0 \to U\to V\to \Cone(\id_P)\to 0$ be an exact sequence in $C_m(\catB)$.
Then $\epsilon$ is a graded split exact sequence
since $\Cone(\id_P)\in C_m(\catP)$.
Because $\Cone(\id_P)$ is a projective object of $C_m(\catB)$ 
as an exact category with the graded split exact structure,
$\epsilon $ splits.
It shows $\Cone(\id_P)$ is a projective object of $C_m(\catB)$ as an abelian category.

Set $R_i:=\Cone(\id_{P[-i-1]})$ .
Then we have the canonical exact sequence
\[
0\to P[-i-1] \to R_i \to P[-i]\to 0
\]
of $C_m(\catB)$.
Thus we have a complex of complexes
\[
R_{\bu}:=\left( \cdots \to R_2 \to R_1 \to R_0 \to 0 \to \cdots  \right)
\]
with a quasi-isomorphism $R_{\bu}\to P$ in $C\left( C_m(\catP) \right)$.
Since $R_i$ is a projective object in $C_m(\catB)$,
$R_{\bu}$ is a projective resolution of $P$.
Thus we have
\[
\Ext^p_{C_m(\catB)}(P,V)=H^i\left(C_m(\catB)(R_{\bu}, V) \right)
\]
The cocycle $Z^i\left(C_m(\catB)(R_{\bu}, V) \right)$
coincides with $C_m(\catB)(P[-i],V)$,
and the coboundary $B^i\left(C_m(\catB)(R_{\bu}, V) \right)$ 
coincides with the set of morphisms
which factor through $R_{i-1}=\Cone(\id_{P[-i-2]})$. 
Hence we obtain
\[
H^i\left(C_m(\catB)(R_{\bu}, V) \right)
=K_m(\catB)(P[-i],V).
\]
\end{proof}

\begin{lem}[{\cite[Lemma 9.3.]{Go13}}]\label{lem:proj resol}
$\pi\left(C^b(\catP)\right) \subset \wt{\catP}$ holds.
\end{lem}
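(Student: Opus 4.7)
The plan is to prove this by induction on the length (number of nonzero components) of a bounded complex $V \in C^b(\catP)$. The base case and the inductive step are both controlled by the defining properties of $\wt{\catP}$: the base case uses that $\wt{\catP}$ contains the stalk-complex class $\catJ$, and the inductive step uses that $\wt{\catP}$ is extension-closed inside the abelian category $C_m(\catB)$.

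For the base case, if $V \in C^b(\catP)$ is a stalk complex of the form $P[-k]$ with $P \in \catP$, then $\pi(V)$ is again a stalk complex concentrated in degree $\ol{k} \in \bbZ_m$ with value in $\catP$, hence $\pi(V) \in \catJ \subset \wt{\catP}$.

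For the inductive step, suppose $V \in C^b(\catP)$ has at least two nonzero components. Choose an integer $n$ lying strictly between the lowest and highest nonzero indices of $V$, and take the brutal truncations $\sigma_{\geq n}(V)$ and $\sigma_{<n}(V)$, giving a short exact sequence
\[
0 \to \sigma_{\geq n}(V) \to V \to \sigma_{<n}(V) \to 0
\]
in $C^b(\catP)$ (componentwise split in each degree, since the two truncations have disjoint supports). Applying the exact functor $\pi : C^b(\catB) \to C_m(\catB)$ from Proposition \ref{prp:exact functor}, I obtain a short exact sequence
\[
0 \to \pi(\sigma_{\geq n}(V)) \to \pi(V) \to \pi(\sigma_{<n}(V)) \to 0
\]
in $C_m(\catB)$. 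Both truncations have strictly fewer nonzero components than $V$, so by the inductive hypothesis $\pi(\sigma_{\geq n}(V))$ and $\pi(\sigma_{<n}(V))$ lie in $\wt{\catP}$. Since $\wt{\catP}$ is extension-closed by definition, it follows that $\pi(V) \in \wt{\catP}$, completing the induction.

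There is no genuinely hard step here: the whole argument rests on the exactness of $\pi$ (already established) and on the extension-closure of $\wt{\catP}$. The only minor care needed is to verify that the brutal truncation really does produce a short exact sequence in the abelian category $C^b(\catB)$ (it does, because in each degree either the kernel or the cokernel term is zero), and that both truncations remain in $C^b(\catP)$ so that the induction stays inside the relevant subcategory.
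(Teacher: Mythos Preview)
Your proof is correct and takes essentially the same approach as the paper: brutal truncation expresses a bounded complex of projectives as an iterated extension of stalk complexes, and then exactness of $\pi$ (Proposition~\ref{prp:exact functor}) together with the extension-closure of $\wt{\catP}$ finishes the argument. The paper peels off one stalk at a time from one end rather than splitting at an interior point, but this is only a cosmetic difference. One minor imprecision: if the lowest and highest nonzero indices of $V$ are adjacent, there is no integer \emph{strictly} between them; you should allow $a < n \le b$ (equivalently, just peel off one end, as the paper does).
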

\begin{proof}
Let $P\in C^b(\catP)$.
We may assume that $P^i=0$ for $i<-n, 0<i$, where $n\in \bbZ_{\ge0}$. 
Then we have the following exact sequences in $C^b(\catB)$.
\[
\begin{tikzcd}[row sep=tiny]
0 \arrow[r] & P^{-n}[n] \arrow[r] & P \arrow[r] & \sigma_{\ge -n+1}P \arrow[r] & 0\\
0 \arrow[r] & P^{-n+1}[n-1] \arrow[r] & \sigma_{\ge -n+1}P \arrow[r] & \sigma_{\ge -n+2}P \arrow[r] & 0\\
0 \arrow[r] & P^{-n+2}[n-2] \arrow[r] & \sigma_{\ge -n+2}P \arrow[r] & \sigma_{\ge -n+3}P \arrow[r] & 0\\
&& \cdots\\
0 \arrow[r] & P^{-1}[1] \arrow[r] & \sigma_{\ge -1}P \arrow[r] & P^0 \arrow[r] & 0 ,\\
\end{tikzcd}
\]
where
\[
\sigma_{\ge i}P
:=\left(\cdots \to 0  \to 0 \to P^i \to P^{i+1} \to P^{i+2} \to \cdots  \right).
\]
Since $\pi : C^b(\catB)\to C_m(\catB)$ is exact,
we have the desired conclusion.
\end{proof}

\begin{lem}[c.f. {\cite[Proposition 9.7.]{Go13}}]\label{lem:ext stalk}
$\wt{\catB}= C_m(\catB)$.
\end{lem}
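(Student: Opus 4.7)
The plan is to show that every $V \in C_m(\catB)$ fits in a short exact sequence $0 \to W \to V \to V' \to 0$ in $C_m(\catB)$ with both $W$ and $V'$ already in $\wt{\catB}$; the extension-closedness of $\wt{\catB}$ then finishes the argument.

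The first step is to extend Lemma \ref{lem:proj resol} from $\catP$ to $\catB$. The stupid-truncation argument used there only requires that one can form, inside $C^b(\catB)$, the short exact sequences $0 \to B^i[-i] \to \sigma_{\geq i}B \to \sigma_{\geq i+1}B \to 0$ for any bounded complex $B$, and that each stalk $B^i[-i]$ be sent by $\pi$ to an object of $\catI$. Both are obviously true with $\catP$ replaced by $\catB$, so the same proof yields $\pi(C^b(\catB)) \subset \wt{\catB}$. A direct consequence is that any $V \in C_m(\catB)$ with at least one vanishing component is in $\wt{\catB}$: after re-indexing we may assume $V^{m-1}=0$, and then $V = \pi(V^0 \to V^1 \to \cdots \to V^{m-2})$.

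For the general case, I would define the subcomplex $W \subset V$ by $W^0 := V^0$, $W^1 := \Ima(d_V^0)$, and $W^i := 0$ for $i \neq 0,1$, with differential the restriction of $d_V^0$. The identity $d_V^1 d_V^0 = 0$ is exactly what guarantees that $W$ is a subcomplex of $V$. On the one hand, $W$ itself is an extension of two stalk complexes: the subcomplex with $\Ima(d_V^0)$ in degree $1$ and the quotient with $V^0$ in degree $0$; so $W \in \wt{\catB}$. On the other hand, $V/W$ satisfies $(V/W)^0 = 0$, and therefore lies in $\wt{\catB}$ by the previous paragraph. Extension-closedness then yields $V \in \wt{\catB}$.

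There is no real obstacle here; the only content is the right choice of $W$. The key insight is that the first piece $V^0 \twoheadrightarrow \Ima(d_V^0)$ can always be peeled off as a two-term subcomplex, leaving a quotient with a gap in degree $0$ that is identified with $\pi$ of a bounded complex and so falls under the extended form of Lemma \ref{lem:proj resol}.
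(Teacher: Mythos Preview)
Your argument is correct. Both proofs reduce an arbitrary $V$ to iterated extensions of stalk complexes, but they organize the reduction differently.

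The paper proceeds by induction on the number of nonzero components of $V$: assuming $V^0\neq 0$, it uses the \emph{kernel} $Z^0(V)$ to form the subcomplex $U$ with $U^0=Z^0(V)$ and $U^i=V^i$ otherwise, giving $0\to U\to V\to V^0/Z^0(V)\to 0$; then $U$ itself is an extension of the stalk $Z^0(V)$ by a complex $W$ with $W^0=0$, and the induction hypothesis applies to $W$. Thus the paper peels off one degree at a time, using kernels.

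You instead use the \emph{image} $\Ima(d_V^0)$ to peel off the two-term subcomplex $W=(V^0\twoheadrightarrow \Ima(d_V^0))$ in one step, and then invoke your extension of Lemma~\ref{lem:proj resol} to $\catB$ (which hides the remaining induction inside the stupid-truncation argument for bounded complexes) to handle the quotient $V/W$, which has a gap in degree $0$. Your route has the small advantage of reusing Lemma~\ref{lem:proj resol} rather than repeating an explicit induction; the paper's route is entirely self-contained and does not invoke $\pi$ at all. Two minor points worth making explicit in your write-up: the ``re-indexing'' step uses that $\wt{\catB}$ is closed under shifts (clear, since shifts are exact autoequivalences of $C_m(\catB)$ preserving $\catI$); and for $m=2$ one should note that $d_W^1=d_V^1|_{\Ima(d_V^0)}=0$, so $W$ really is a subcomplex with the differential you describe.
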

\begin{proof}
$\wt{\catB}\subset C_m(\catB)$ is clear.
Let $V\in C_m(\catB)$,
and $n_V$ be the number of $i\in \bbZ_m$ with $V^i \ne 0$.
We prove $V\in\wt{\catB}$ by induction on $n_V$.
If $n_V=0$, $V=0$.
Hence $V\in C_m(\catB)$.
If $n_V\ge 1$, there exists $i\in \bbZ_m$ such that $V^i\ne0$.
We may assume that $i=0$.
Then there exists the following exact sequences in $C_m(\catB)$.
\[
\begin{tikzcd}[row sep=scriptsize,column sep=small]
0 \arrow[d] && 0 \arrow[d] & 0\arrow[d] & 0\arrow[d] \\
U \arrow[d] & : (\cdots \arrow[r] & V^{m-1} \arrow[r] \arrow[d,equal] & Z^0(V) \arrow[r,"0"] \arrow[d] & V^1 \arrow[r] \arrow[d,equal] & \cdots )\\
V \arrow[d] & : (\cdots \arrow[r] & V^{m-1} \arrow[d] \arrow[r] & V^0 \arrow[r] \arrow[d] & V^1 \arrow[r] \arrow[d] & \cdots ) \\
V^0/Z^0(V) \arrow[d] & : (\cdots \arrow[r] & 0 \arrow[d] \arrow[r] & V^0/Z^0(V) \arrow[d] \arrow[r] & 0 \arrow[r] \arrow[d] & \cdots ), \\
0&& 0 & 0 & 0
\end{tikzcd}
\]
and
\[
\begin{tikzcd}[row sep=scriptsize,column sep=small]
0 \arrow[d] && 0 \arrow[d] & 0\arrow[d] & 0\arrow[d] \\
Z^0(V) \arrow[d] & : (\cdots \arrow[r] & 0 \arrow[r] \arrow[d] & Z^0(V) \arrow[r] \arrow[d] & 0 \arrow[r] \arrow[d] & \cdots )\\
U \arrow[d] & : (\cdots \arrow[r] & V^{m-1} \arrow[r] \arrow[d,equal] & Z^0(V) \arrow[r,"0"] \arrow[d] & V^1 \arrow[r] \arrow[d,equal] & \cdots )\\
W \arrow[d] & : (\cdots \arrow[r] & V^{m-1} \arrow[d] \arrow[r] & 0 \arrow[d] \arrow[r] & V^1 \arrow[r] \arrow[d] & \cdots ) .\\
0&& 0 & 0 & 0
\end{tikzcd}
\]
Since $n_W < n_V$,
we have $W\in \wt{\catB}$ by the hypothesis of induction.
Thus $V \in \wt{\catB}$.
\end{proof}

The following proposition says that $K_m(\catB)$ has ``enough $K$-projectives''.
\begin{prp}[{\cite[Lemma 9.5.]{Go13}}]\label{prp:enough K-proj periodic}
For any $V\in C_m(\catB)$,
there is a surjective quasi-isomorphism $f:P \surj V$ with $P\in\wt{\catP}$.
\end{prp}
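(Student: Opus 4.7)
The proof proceeds by induction on $n_V := \#\{i \in \bbZ_m : V^i \neq 0\}$, the same parameter as in Lemma \ref{lem:ext stalk}. The base cases $n_V \in \{0, 1\}$ are immediate: take $P = 0$ when $V = 0$, and when $V$ is a stalk complex concentrated at $i_0 \in \bbZ_m$ with entry $M \in \catB$, the finite global dimension hypothesis $\gd \catB \leq d$ yields a bounded projective resolution $Q \in C^b(\catB)$ of $M$ with a surjective quasi-isomorphism $Q \surj M[i_0]$. Applying the exact folding functor of Proposition \ref{prp:exact functor} gives a surjective chain map $\pi(Q) \surj V$ which is a quasi-isomorphism by Proposition \ref{prp:long exact periodic}(2), and $\pi(Q) \in \wt{\catP}$ by Lemma \ref{lem:proj resol}.

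For the inductive step with $n_V \geq 2$, the construction from the proof of Lemma \ref{lem:ext stalk} supplies two short exact sequences
\[
0 \to Z^0(V) \to U \to W \to 0, \qquad 0 \to U \to V \to V^0/Z^0(V) \to 0
\]
in $C_m(\catB)$ in which $Z^0(V)$ and $V^0/Z^0(V)$ are stalk complexes and $n_W < n_V$. Combining the induction hypothesis with the base case, it suffices to prove a horseshoe lemma: given $0 \to A \xr{f} B \xr{g} C \to 0$ exact in $C_m(\catB)$ together with surjective quasi-isomorphisms $\phi_A : P_A \surj A$ and $\phi_C : P_C \surj C$ with $P_A, P_C \in \wt{\catP}$, one can construct a surjective quasi-isomorphism $\phi_B : P_B \surj B$ with $P_B \in \wt{\catP}$ fitting into a graded split sequence $0 \to P_A \to P_B \to P_C \to 0$ compatible with $(f, g)$. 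Note that each entry $P_C^i$ is projective in $\catB$ since $\catP$ is closed under extensions in $\catB$, so we may lift $\phi_C^i$ through $g^i$ to $\wh\phi_C^i : P_C^i \to B^i$. The defect $f^{i+1} \delta^i := d_B \wh\phi_C^i - \wh\phi_C^{i+1} d_{P_C}^i$ factors through $f^{i+1}$ and assembles into a chain map $\delta : P_C \to A[1]$. By Proposition \ref{prp:K-proj wtP}, $P_C$ is $K$-projective, so $\phi_A[1]$ induces a bijection $K_m(\catB)(P_C, P_A[1]) \simto K_m(\catB)(P_C, A[1])$; lift $\delta$ to a chain map $\eta : P_C \to P_A[1]$ and absorb the resulting homotopy into a redefinition of $\wh\phi_C$ so that $f \circ \delta = \phi_A \circ \eta$ on the nose. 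Set $P_B := P_A \oplus P_C$ with differential $\begin{bsmatrix} d_{P_A} & \eta \\ 0 & d_{P_C} \end{bsmatrix}$ and $\phi_B(x, y) := f \phi_A(x) + \wh\phi_C(y)$; routine verification shows $d_{P_B}^2 = 0$, $\phi_B$ is a chain map, and the tautological sequence $0 \to P_A \to P_B \to P_C \to 0$ is graded split exact and compatible with $(f, g)$. Extension closure of $\wt{\catP}$ yields $P_B \in \wt{\catP}$, the snake lemma yields surjectivity of $\phi_B$, and the five lemma applied to the long exact cohomology sequences of Proposition \ref{prp:long exact periodic}(4) yields the quasi-isomorphism property.

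The main obstacle is the horseshoe step, specifically producing an $\eta$ that is simultaneously a chain map (so that $d_{P_B}^2 = 0$) and a genuine chain-level lift of $\delta$ (so that $\phi_B$ is a chain map). This is exactly where Proposition \ref{prp:K-proj wtP} intervenes: $K$-projectivity of $P_C$ upgrades the derived-categorical preimage of $\delta$ under $\phi_A[1]$ to a morphism in $Z^0$, and the residual homotopy is absorbed into a correction of $\wh\phi_C$ to obtain the required on-the-nose identity.
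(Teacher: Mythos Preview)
Your argument is correct and follows the same overall strategy as the paper: handle stalk complexes via bounded projective resolutions and $\pi$, then propagate through extensions using the $K$-projectivity of $\wt\catP$ (Proposition~\ref{prp:K-proj wtP}), and conclude via Lemma~\ref{lem:ext stalk}. The only difference is in the packaging of the extension step: the paper works abstractly, pulling back along $P_W\surj W$ and invoking the isomorphism $\Ext^1_{C_m(\catB)}(P_W,P_U)\simto\Ext^1_{C_m(\catB)}(P_W,U)$ obtained from Proposition~\ref{prp:Ext K-proj} together with $K$-projectivity, whereas you carry out the horseshoe explicitly by lifting $\phi_C$ componentwise, reading off the defect $\delta$, and using $K$-projectivity to lift $\delta$ along $\phi_A[1]$. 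Your route avoids the detour through Proposition~\ref{prp:Ext K-proj} at the cost of some bookkeeping; the paper's route is shorter but uses one more ingredient. One small notational slip: the identity you need after absorbing the homotopy is $\delta=\phi_A[1]\circ\eta$ (equivalently $f[1]\circ\delta=f[1]\circ\phi_A[1]\circ\eta$), not ``$f\circ\delta=\phi_A\circ\eta$'' as written.
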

\begin{proof}
Let $\catI$ be the classes of stalk complexes in $C_m(\catB)$.
Consider a full subcategory
\[
\catU :=\left\{V\in C_m(\catB) \mid 
\mbox{There exists a surjective quasi-isomorphism $f_M:P_V \surj V$ with $P_V\in\wt{\catP}$}
\right\}.
\]
We first show that $\catI \subset \catU$,
and next $\catU$ is an extension closed subcategory.
If we prove this, 
then $C_m(\catB) = \wt{\catB}  \subset \catU $
by Lemma \ref{lem:ext stalk},
and the proof is complete.

Let $V\in\catI$.
We may assume that $V$ is concentrated in degree $0$.
Let
\[
0\to P^{-d}\to P^{-d+1} \to \dots \to P^0 \to V \to 0
\]
be a finite projective resolution of $V$ in $\catB$.
Set
\[
P:=\left(\cdots\to 0\to P^{-d}\to P^{-d+1} \to \dots \to P^0 \to 0 \to \cdots \right)
\in C^b(\catP),
\]
then we have a surjective quasi-isomorphism $f:P\to V$ in $C^b(\catB)$.
By Proposition \ref{prp:exact functor} (2) and Lemma \ref{lem:qis cone periodic} (4),
$\pi(f):\pi(P)\surj V$ is also a surjective quasi-isomorphism.
Since $\pi(P)\in\wt{\catP}$ by Lemma \ref{lem:proj resol},
$\pi(f)$ is the desired morphism.
Hence $V\in \catU$.

Next, we show that $\catU$ is closed under extension.
Let $\epsilon : 0\to U \to V\to W\to 0$ be an exact sequence
of $C_m(\catB)$ with $U,V \in \catU$.
There exist surjective quasi-isomorphisms $P_U\surj U$ and $P_W\surj W$
with $P_U, P_W \in \wt{\catP}$.
Pulling-back $\epsilon$ by $P_W\surj W$,
We get the following commutative diagram in $C_m(\catB)$:
\[
\begin{tikzcd}
\epsilon': &
 0 \arrow[r] & U \arrow[r] \arrow[d,equal] & Q \pb \arrow[r] \arrow[d,two heads] & P_W \arrow[r] \arrow[d,two heads] & 0\\
\epsilon : &
 0 \arrow[r] & U \arrow[r] & V \arrow[r] & W \arrow[r] & 0.
\end{tikzcd}
\]
Let $A_U$ be the kernel of $P_U \surj U$.
Since $P_U \surj U$ is a quasi-isomorphism,
$A_U$ is acyclic.
Applying the functor $C_m(\catB)(P_W, - )$ to
the exact sequence $0\to A_U \to P_U \to U \to 0$
and considering the long exact sequence,
we have
\[
\Ext^1_{C_m(\catB)}(P_W, P_U) \simto \Ext^1_{C_m(\catB)}(P_W, U)
\]
since 
\[
\Ext^p_{C_m(\catB)}(P_W,A_U) \simeq D_m(\catB)(P_W,A_U[p])=0
\]
for all $p\ge1$ by Proposition \ref{prp:Ext K-proj}.
Thus there exists an exact sequence $\epsilon'':0\to P_U\to P_V \to P_W \to 0$
such that the push-out of $\epsilon''$ by $P_U\surj U$ is $\epsilon'$.
By discussion above, we have the following commutative diagram in $C_m(\catB)$.
\[
\begin{tikzcd}
\epsilon'': &
0 \ar[r] & P_U \po \arrow[r] \arrow[d,two heads] & P_V \arrow[r] \arrow[d,two heads] & P_W \arrow[r] \arrow[d,equal] & 0\\
\epsilon': &
 0 \arrow[r] & U \arrow[r] \arrow[d,equal] & Q \pb \arrow[r] \arrow[d,two heads] & P_W \arrow[r] \arrow[d,two heads] & 0\\
\epsilon : &
 0 \arrow[r] & U \arrow[r] & V \arrow[r] & W \arrow[r] & 0.
\end{tikzcd}
\]
By the snake lemma,
$P_V \to V$ is a surjective quasi-isomorphism.
Since $\wt{\catP}$ is closed under extensions in $C_m(\catB)$,
$P_V\in \wt{\catP}$.
Therefore $\catU$ is closed under extensions.
\end{proof}

Recall that $\catJ$ is the class of stalk complexes in $C_m(\catP)$
(See the sentence preceding Proposition \ref{prp:K-proj wtP}).
\begin{cor}\label{cor:proj periodic}
The following holds.
\begin{enua}
\item
$\tria_{K_m(\catB)}(\catP)=K_m(\catP)$.
\item
$K_m(\catP)\subset K_{m,p}(\catB)$ holds.
\item
The natural functor
$K_m(\catP) \to D_m(\catB)$
is a triangulated equivalence.
\item
$\Ext^i_{C_m(\catB)}(P,V) \simto D_m(\catB)(P[-i],V)$
for any $P\in C_m(\catP)$, $V\in C_m(\catB)$ and $i\ge1$.
\end{enua}
\end{cor}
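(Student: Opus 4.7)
The plan is to prove (1) first and then deduce (2)--(4) as short consequences; (1) is the main obstacle. The inclusion $\tria_{K_m(\catB)}(\catP)\subset K_m(\catP)$ is immediate since $K_m(\catP)$ is a triangulated subcategory of $K_m(\catB)$ containing the stalks $\catP$. The reverse inclusion is subtle because a periodic complex $V\in C_m(\catP)$ generally has a non-trivial ``wrapping'' differential $d_V^{m-1}\colon V^{m-1}\to V^0$ that obstructs a naive filtration by stalk subcomplexes. To circumvent this, I will use the folding functor $\pi$. Given $V\in C_m(\catP)$, form the bounded complex $\wt V\in C^b(\catP)$ of length $2m$ with $\wt V^i := V^{\ol i}$ for $0\le i\le 2m-1$, differentials $\wt d^i := d_V^{\ol i}$ for $0\le i\le 2m-2$, and $\wt d^{2m-1}:=0$; let $\wt V'\in C^b(\catP)$ be the truncation of $\wt V$ to degrees $\le m-1$. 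A direct computation shows
\[
\pi(\wt V) \cong V\oplus \pi(\wt V') \quad\text{in } C_m(\catP),
\]
because the degree-$j$ differential of $\pi(\wt V)$ is $\diag(d_V^j, d_V^j)$ for $j<m-1$ and $\diag(d_V^{m-1}, 0)$ for $j=m-1$, which splits off $V$ on one factor and $\pi(\wt V')$ on the other. Since $K^b(\catP)=\tria_{K^b(\catB)}(\catP)$ by the standard bounded case, the triangulated functor $\pi\colon K^b(\catB)\to K_m(\catB)$ places both $\pi(\wt V)$ and $\pi(\wt V')$ in $\tria_{K_m(\catB)}(\catP)$. The split exact triangle $\pi(\wt V')\to V\oplus\pi(\wt V')\to V\to\pi(\wt V')[1]$ then exhibits $V$ as a cone of objects in $\tria_{K_m(\catB)}(\catP)$, completing (1).

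Part (2) combines (1) with Proposition~\ref{prp:K-proj wtP}: since $\catP\subset\wt{\catP}\subset K_{m,p}(\catB)$ and $K_{m,p}(\catB)$ is a thick subcategory of $K_m(\catB)$, we get $K_m(\catP)=\tria_{K_m(\catB)}(\catP)\subset K_{m,p}(\catB)$. For (3), essential surjectivity follows from Proposition~\ref{prp:enough K-proj periodic}: every $V\in C_m(\catB)$ admits a quasi-isomorphism $P\surj V$ with $P\in\wt{\catP}\subset C_m(\catP)$, so $V\cong P$ in $D_m(\catB)$; full faithfulness follows from (2), since the $K$-projectivity of $P\in K_m(\catP)$ yields $K_m(\catP)(P,Q)=K_m(\catB)(P,Q)\simeq D_m(\catB)(P,Q)$. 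Finally, (4) is obtained by composing Proposition~\ref{prp:Ext K-proj}, which gives $\Ext^i_{C_m(\catB)}(P,V)\simeq K_m(\catB)(P[-i],V)$, with the isomorphism $K_m(\catB)(P[-i],V)\simeq D_m(\catB)(P[-i],V)$ coming from the $K$-projectivity of $P[-i]\in K_m(\catP)$ from (2).
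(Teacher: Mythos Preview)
Your deductions of (2)--(4) from (1) are correct and essentially match the paper's one-line ``(2), (3) and (4) follow from (1)''. The problem is your proof of (1).

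The claimed direct-sum decomposition $\pi(\wt V)\cong V\oplus\pi(\wt V')$ in $C_m(\catP)$ rests on an incorrect computation of the differential of $\pi(\wt V)$. Recall that $\pi(\wt V)^j=\wt V^j\oplus\wt V^{j+m}$ for $0\le j\le m-1$. For $j<m-1$ the differential is indeed block-diagonal, but at $j=m-1$ the map $\wt d^{m-1}=d_V^{m-1}\colon \wt V^{m-1}\to\wt V^{m}$ lands in the \emph{second} summand of $\pi(\wt V)^0=\wt V^0\oplus\wt V^m$, while $\wt d^{2m-1}=0$. Hence
\[
d_{\pi(\wt V)}^{\,m-1}=\begin{pmatrix}0&0\\ d_V^{m-1}&0\end{pmatrix}\colon V^{m-1}\oplus V^{m-1}\longrightarrow V^0\oplus V^0,
\]
which is off-diagonal, not $\diag(d_V^{m-1},0)$. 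With this corrected differential, neither summand of $\pi(\wt V)^{\bullet}$ is stable under all differentials, and one checks directly that in general there is no chain map $V\to\pi(\wt V)$ or $\pi(\wt V)\to V$ realizing $V$ as a direct summand (try a constant block change of basis and one is forced to a non-invertible matrix). What one does get from $\wt V$ is only a graded-split short exact sequence $0\to\pi(\wt V')\to\pi(\wt V)\to\pi(\wt V')\to0$, which shows $\pi(\wt V)\in\tria_{K_m(\catB)}(\catP)$ but says nothing about $V$ itself. So the key step---extracting $V$ from objects known to lie in $\tria_{K_m(\catB)}(\catP)$---is missing.

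The paper's argument for (1) goes a different route and genuinely uses the standing hypothesis $\gd\catB<\infty$: given $V\in C_m(\catP)$, Proposition~\ref{prp:enough K-proj periodic} (which needs finite projective resolutions) produces a surjective quasi-isomorphism $P\twoheadrightarrow V$ with $P\in\wt{\catP}$; Proposition~\ref{prp:qis proj} upgrades this to an isomorphism $P\cong V$ in $K_m(\catB)$; and since $\wt{\catP}\subset\tria_{K_m(\catB)}(\catP)$ by Proposition~\ref{prp:ex seq tri proj}, one concludes $V\in\tria_{K_m(\catB)}(\catP)$. Your attempted shortcut via $\pi$ would, if it worked, avoid the global-dimension hypothesis entirely; the fact that it does not work as written is consistent with that hypothesis being genuinely needed here.
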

\begin{proof}
For any $V\in C_m(\catP)$,
there exists a surjective quasi-isomorphism $f: P \surj V$ with $P\in\wt{\catP}$.
Then $f$ is an isomorphism in $K_m(\catB)$ by Proposition \ref{prp:qis proj}.
By Proposition \ref{prp:ex seq tri proj}, $P\in\tria_{K_m(\catB)}(\catP)$,
and hence (1) holds.
(2), (3) and (4) follows from (1).
\end{proof}

\begin{cor}\label{cor:Hom periodic}
The following holds.
\begin{enua}
\item
For any $V,W \in C^b(\catB)$,
$D_m(\catB)(\pi V,\pi W)=\bigoplus_{i\in\bbZ}D^b(\catB)(V,W[mi])$.
\item
For any $M,N \in \catB$,
$D_m(\catB)(M,N)=\bigoplus_{i\in\bbZ}\Ext_{\catB}^{mi}(M,N)$.
\item
If $\gd \catB < m$,
then there exists a fully faithful functor
$\catB \inj D_m(\catB)$.
\end{enua}
\end{cor}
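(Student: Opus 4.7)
The plan is to deduce \eqref{cor:Hom periodic} in the stated order: (1) from the chain-level Hom computation together with a $K$-projective resolution, (2) by specializing (1) to stalk complexes, and (3) as an immediate vanishing consequence of (2).

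For (1), since $\catB$ has finite global dimension, I would first replace $V\in C^b(\catB)$ by a quasi-isomorphism $P_V \to V$ with $P_V\in C^b(\catP)$ (a bounded projective resolution). By Proposition \ref{prp:exact functor} (2) and Lemma \ref{lem:qis cone periodic} (4), applying $\pi$ yields a quasi-isomorphism $\pi(P_V)\to \pi(V)$ in $C_m(\catB)$. Moreover, Lemma \ref{lem:proj resol} gives $\pi(P_V)\in\wt{\catP}$, and Proposition \ref{prp:K-proj wtP} shows $\wt{\catP}\subset K_{m,p}(\catB)$. Hence
\[
D_m(\catB)(\pi V,\pi W) \simeq D_m(\catB)(\pi P_V,\pi W) \simeq K_m(\catB)(\pi P_V,\pi W),
\]
where the second isomorphism uses $K$-projectivity of $\pi P_V$. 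Now Proposition \ref{prp:pi Hom} (3) gives
\[
K_m(\catB)(\pi P_V,\pi W) = \bigoplus_{i\in\bbZ} K^b(\catB)(P_V, W[mi]).
\]
Since $P_V$ is a bounded complex of projectives, it is $K$-projective in $K^b(\catB)$, so each summand equals $D^b(\catB)(P_V,W[mi]) \simeq D^b(\catB)(V,W[mi])$. Assembling these yields (1).

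For (2), I apply (1) to the stalk complexes $V:=M$, $W:=N$ (so $\pi M=M$ and $\pi N=N$) and use the standard identification $D^b(\catB)(M,N[k])=\Ext^k_{\catB}(M,N)$, valid for all $k\in\bbZ$ (vanishing for $k<0$). This directly gives $D_m(\catB)(M,N)=\bigoplus_{i\in\bbZ}\Ext^{mi}_{\catB}(M,N)$.

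For (3), under the hypothesis $\gd\catB<m$, in the formula of (2) the summand $\Ext^{mi}_{\catB}(M,N)$ vanishes for $i\ge 1$ (since then $mi\ge m>\gd\catB$) and for $i\le -1$ (negative degree Ext). Only the $i=0$ term survives, giving $D_m(\catB)(M,N)\simeq\Hom_{\catB}(M,N)$ functorially, so the natural $\delta$-functor $\catB\to D_m(\catB)$ of Proposition \ref{prp:delta} is fully faithful on stalk complexes. The only delicate point is the bookkeeping in (1): one must verify that $\pi$ applied to a bounded projective resolution produces a $K$-projective object of $C_m(\catB)$ that computes $D_m(\catB)(\pi V,-)$, but this is precisely what the combination of Lemmas \ref{lem:proj resol} and \ref{prp:K-proj wtP} provides.
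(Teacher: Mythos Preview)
Your proof is correct and follows essentially the same route as the paper's: replace by bounded projective resolutions, use that $\pi$ of such a resolution is $K$-projective, apply Proposition~\ref{prp:pi Hom}(3), and then pass back to $D^b(\catB)$. The only cosmetic difference is that the paper resolves both $V$ and $W$ by $P,Q\in C^b(\catP)$ and computes $K_m(\catB)(\pi P,\pi Q)$, whereas you resolve only the source $V$; since $P_V$ is already $K$-projective in $K^b(\catB)$ this is a harmless economy and the arguments are otherwise identical.
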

\begin{proof}
(1)
Let $f:P\to V$ and $g:Q\to W$ be quasi-isomorphisms with $P,Q \in C^b(\catP)$.
Then $\pi(f):\pi P \to \pi V$ and $\pi(g): \pi Q \to \pi W$
are also quasi-isomorphism with $\pi P, \pi Q\in C_m(\catP)$
by Lemma \ref{lem:qis cone periodic}.
Hence we have
\begin{align*}
D_m(\catB)(\pi V,\pi W)
&\simeq D_m(\catB)(\pi P,\pi Q)
\simeq K_m(\catB)(\pi P,\pi Q)
\simeq \bigoplus_{i\in\bbZ}K^b(\catB)(P,Q[mi])\\
&\simeq \bigoplus_{i\in\bbZ}D^b(\catB)(P,Q[mi])
\simeq \bigoplus_{i\in\bbZ}D^b(\catB)(V,W[mi]),
\end{align*}
where we use Proposition \ref{prp:pi Hom} in the third isomorphism,.

(2)
It follows from (1).

(3)
A natural functor $\catB \to D^b(\catB) \xr{\iota} D_m(\catB)$ is fully faithful.
Indeed,
for any $M,N\in \catB$,
\[
D_m(\catB)(M,N)=\bigoplus_{i\in\bbZ}\Ext_{\catB}^{mi}(M,N)=\catB(M,N)
\]
since $\gd \catB <m$.
\end{proof}

There is a natural question
\footnote{This question is posed by Haruhisa Enomoto.
See \cite[Remark 3.20]{AET21} for the application.}
that when is $\catB$ an extension-closed subcategory of $D_m(\catB)$?
We answer this question.
A subcategory $\catC$ of a triangulated category is \emph{extension-closed}
if for any exact triangle $X\to Y \to Z \to X[1]$ with $X, Z \in \catC$,
$Y$ is also in $\catC$.
\begin{lem}\label{lem:delta ext}
Let $F:\catB \to \catT$ be a $\delta$-functor.
If $\delta_{W,U}$ is surjective for any $U,W\in\catB$,
then the essential image $F(\catB)$ of $F$ 
is an extension-closed subcategory of $\catT$.
\end{lem}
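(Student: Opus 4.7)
The plan is to take an exact triangle $X \xr{a} Y \xr{b} Z \xr{c} \Sg X$ in $\catT$ with $X, Z \in F(\catB)$, fix objects $U, W \in \catB$ together with isomorphisms $X \simeq FU$ and $Z \simeq FW$, and then exhibit an object $V \in \catB$ with $FV \simeq Y$.

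First, via the fixed isomorphisms, the connecting morphism $c$ defines an element of $\catT(FW, \Sg FU)$. By the surjectivity hypothesis on $\delta_{W,U} : \Ext^1_{\catB}(W,U) \to \catT(FW, \Sg FU)$, there exists an extension
\[
\epsilon : 0 \to U \xr{f} V \xr{g} W \to 0
\]
in $\catB$ with $\delta_{W,U}(\epsilon) = c$. By the definition of a $\delta$-functor, applying $F$ to $\epsilon$ produces an exact triangle
\[
FU \xr{Ff} FV \xr{Fg} FW \xr{c} \Sg FU
\]
in $\catT$.

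The two exact triangles with third morphism $c$, namely the original $X \to Y \to Z \xr{c} \Sg X$ (transported along the chosen isomorphisms) and the triangle obtained from $F\epsilon$, have the same outer objects and the same connecting morphism. By the standard rotation/completion axiom of triangulated categories (TR3), any morphism of triangles that is the identity on the outer terms can be completed to an isomorphism on the middle term, which gives $Y \simeq FV$ in $\catT$. Hence $Y$ lies in the essential image $F(\catB)$, proving that $F(\catB)$ is extension-closed.

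The argument is essentially formal, and no step is a serious obstacle; the only point requiring mild care is the transport of the triangle along the chosen isomorphisms $X \simeq FU$ and $Z \simeq FW$ so that the surjectivity of $\delta_{W,U}$ actually applies to the connecting morphism in its correct form. Once this identification is made, both the existence of the preimage $\epsilon$ and the application of TR3 are immediate.
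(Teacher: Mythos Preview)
Your proof is correct and follows essentially the same approach as the paper: use surjectivity of $\delta_{W,U}$ to lift the connecting morphism to an extension in $\catB$, apply the $\delta$-functor axiom to obtain a second triangle with the same connecting morphism, and then invoke TR3 to identify the middle terms. The only cosmetic difference is that the paper writes the triangle directly with outer terms $FU$ and $FW$, whereas you explicitly transport along the isomorphisms $X\simeq FU$ and $Z\simeq FW$.
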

\begin{proof}
Let $U,W\in\catB$,
and $FU \to X \to FW \xr{\delta} \Sg FU$ be an exact triangle in $\catT$.
Since $\delta_{C,A}$ is surjective,
there exists an exact sequence
$\epsilon : 0\to U \xr{f} V \xr{g}W\to0$ in $\catB$ satisfying the following.
\begin{itemize}[nosep]
\item $\delta(\epsilon)=\delta$.
\item $FU \xr{Ff} FV \xr{Fg}FW\xr{\delta}\Sg FU$ is an exact triangle in $\catT$.
\end{itemize}
Then there exists a morphism $\phi : X\to FV$
such that
\[
\begin{tikzcd}
FU \arrow[r] \arrow[d,equal] & X \arrow[r] \arrow[d,"\phi"] & FW \arrow[r,"\delta"] \arrow[d,equal] & \Sg FA \arrow[d,equal]\\
FU \arrow[r,"Ff"] & FV \arrow[r,"Fg"] & FW \arrow[r,"\delta"] & \Sg FU
\end{tikzcd}
\]
commutes,
and hence $\phi$ is an isomorphism.
Thus $X\in F(\catB)$
\end{proof}
\begin{cor}
If $2\le \gd \catB\le m$,
the essential image of the natural functor $\catB \to D_m(\catB)$
is extension-closed.
In particular,
$\catB$ is an extension-closed subcategory of $D_m(\catB)$
if $2\le \gd \catB <m$.
\end{cor}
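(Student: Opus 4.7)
The plan is to apply Lemma~\ref{lem:delta ext} to the natural functor $F\colon \catB \to D_m(\catB)$ that sends an object to its stalk complex concentrated in degree $0$. First I would verify that $F$ is a $\delta$-functor by restricting the $\delta$-functor structure of $C_m(\catB) \to D_m(\catB)$ from Proposition~\ref{prp:delta} along the embedding $\catB \inj C_m(\catB)$ as stalk complexes: any short exact sequence in $\catB$ gives a short exact sequence of stalk complexes in $C_m(\catB)$, whose image under the $\delta$-functor of Proposition~\ref{prp:delta} supplies the connecting morphism $\delta_{W,U}\colon \Ext^1_\catB(W,U) \to D_m(\catB)(W, \Sg U)$.

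The main computation is then to show that $\delta_{W,U}$ is surjective. By Corollary~\ref{cor:Hom periodic}(2),
\[
D_m(\catB)(W, \Sg U) \;=\; \bigoplus_{i\in\bbZ} \Ext^{1+mi}_\catB(W, U).
\]
Under the hypotheses $m \ge 2$ and $\gd \catB \le m$, only the $i=0$ summand survives: for $i \ge 1$ we have $1+mi \ge 1+m > \gd \catB$, and for $i \le -1$ we have $1+mi \le 1-m < 0$. Thus $D_m(\catB)(W, \Sg U) \simeq \Ext^1_\catB(W, U)$. Unwinding the definitions, the map $\delta_{W,U}$ coincides with the classical Yoneda identification between extensions and elements of $\Ext^1$, and in particular is surjective, so Lemma~\ref{lem:delta ext} applies to give the first assertion.

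For the ``in particular'' clause, I would invoke Corollary~\ref{cor:Hom periodic}(3), which under the strict inequality $\gd \catB < m$ guarantees that $F$ is fully faithful; combined with the previous paragraph, $\catB$ is then identified with an extension-closed full subcategory of $D_m(\catB)$.

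The main point requiring care is confirming that the $\delta$-functor structure inherited from Proposition~\ref{prp:delta} matches the classical identification $\Ext^1_\catB(W,U) \simeq D^b(\catB)(W, U[1])$ after applying $\iota\colon D^b(\catB) \to D_m(\catB)$. This reduces to comparing the explicit cone diagram constructed in the proof of Proposition~\ref{prp:delta} with the standard connecting morphism in $D^b(\catB)$, and is a direct but slightly tedious diagram chase, so I expect no genuine obstruction beyond bookkeeping.
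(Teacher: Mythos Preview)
Your proposal is correct and follows essentially the same route as the paper: compute $D_m(\catB)(W,\Sg U)$ via Corollary~\ref{cor:Hom periodic} to see that under $2\le\gd\catB\le m$ only the summand $\Ext^1_\catB(W,U)$ survives, so that $\delta_{W,U}$ is an isomorphism, and then invoke Lemma~\ref{lem:delta ext}. The paper is slightly terser (it does not spell out the $\delta$-functor compatibility you flag as the ``main point requiring care''), but the argument is the same.
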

\begin{proof}
For any $U,W\in \catB$,
the natural morphism $\delta_{W,U}:\Ext^1_\catB(W,U)\to D_m(\catB)(W,U[1])$
is an isomorphism.
Indeed,
\[
D_m(\catB)(W,U[1])
=\bigoplus_{i\in\bbZ}D^b(\catB)(W,U[mi+1])
=\bigoplus_{i\in\bbZ}\Ext_{\catB}^{mi+1}(W,U)
=\bigoplus_{i\ge 0}\Ext_{\catB}^{mi+1}(W,U)
=\Ext_{\catB}^{1}(W,U).
\]
In the third equality, we use $m\ge2$.
In the last equality, we use $\gd \catA \le m$.
Hence the essential image of the natural functor $\catB \to D_m(\catB)$
is extension-closed by Lemma \ref{lem:delta ext}
\end{proof}

In the hereditary case,
the periodic derived category $D_m(\catB)$ behaves similarly to
the bounded derived category $D^b(\catB)$,
that is,
periodic complexes are determined by its cohomology group,
up to quasi-isomorphisms.
\begin{prp}[{\cite[Lemma 4.2.]{Br13}}]\label{prp:hered periodic}
Let $\catB$ be hereditary, i.e. $\gd\catB \le 1$.
Then for any $V\in C_m(\catB)$,
$V\simeq \bigoplus_{0\le i \le m-1} H^i(V)[-i]$ in $D_m(\catB)$.
\end{prp}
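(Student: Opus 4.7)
The plan is to realize each cohomology $H^i(V)$ as the cohomology of an explicit two-term $m$-periodic complex of projectives, and to construct a chain map from the direct sum of these complexes into $V$ which induces the identity on every cohomology.

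First, for each $i\in\bbZ_m$, pick a short exact sequence
$$0 \to K^i \xrightarrow{\iota_i} P^i \xrightarrow{\pi_i} H^i(V) \to 0$$
in $\catB$ with $P^i\in\catP$; since $\gd\catB\le 1$, the subobject $K^i$ of the projective $P^i$ is itself projective (applying $\Ext^*_\catB(-,M)$ to the sequence, $\Ext^1_\catB(K^i,M)$ is sandwiched between $\Ext^1_\catB(P^i,M)=0$ and $\Ext^2_\catB(H^i(V),M)=0$). As $m\ge 2$, the residues $i-1$ and $i$ in $\bbZ_m$ are distinct, so we may define $Q_i\in C_m(\catP)$ by $Q_i^{i-1}:=K^i$, $Q_i^i:=P^i$, $Q_i^j:=0$ otherwise, and $d_{Q_i}^{i-1}:=\iota_i$ with other differentials zero. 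Then $Q_i$ has cohomology $H^i(V)$ concentrated in degree $i$, and the chain map $Q_i\to H^i(V)[-i]$ equal to $\pi_i$ in degree $i$ is a quasi-isomorphism, hence $Q_i\simeq H^i(V)[-i]$ in $D_m(\catB)$.

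Next, I construct a chain map $f_i:Q_i\to V$. Since $P^i$ is projective, lift $\pi_i:P^i\to H^i(V)=Z^i(V)/B^i(V)$ through the surjection $Z^i(V)\twoheadrightarrow H^i(V)$ to obtain $\varphi_i:P^i\to Z^i(V)$. The composite $\varphi_i\iota_i:K^i\to Z^i(V)$ has image contained in $B^i(V)$, since it becomes zero in $H^i(V)$; call the resulting morphism $\bar\varphi_i:K^i\to B^i(V)$. Since $K^i$ is projective and $d_V^{i-1}:V^{i-1}\twoheadrightarrow B^i(V)$ is surjective, lift $\bar\varphi_i$ to $\psi_i:K^i\to V^{i-1}$. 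Define $f_i^i:=\varphi_i$ composed with the inclusion $Z^i(V)\hookrightarrow V^i$, $f_i^{i-1}:=\psi_i$, and $f_i^j=0$ otherwise. The chain map conditions reduce to $d_V^{i-1}\psi_i=\varphi_i\iota_i$ and $d_V^i\varphi_i=0$, both of which hold by construction.

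Finally, set $f:=\bigoplus_{i=0}^{m-1} f_i:\bigoplus_i Q_i \to V$. By construction $H^i(f_i)$ is the isomorphism $P^i/\iota_i(K^i)\simto H^i(V)$ induced by $\pi_i$, while $H^j(f_i)=0$ for $j\ne i$; hence $f$ induces an isomorphism on every cohomology and is a quasi-isomorphism, yielding
$$V\simeq\bigoplus_{i=0}^{m-1} Q_i\simeq\bigoplus_{i=0}^{m-1}H^i(V)[-i]$$
in $D_m(\catB)$. The substantive use of the hereditary hypothesis occurs in establishing projectivity of $K^i$, without which the lift $\psi_i$ need not exist; this is the only real obstacle in the argument.
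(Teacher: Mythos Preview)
Your proof is correct and complete for $m\ge 2$ (which matches the paper's standing convention; the paper also defers the case $m=1$ to a reference).

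The paper takes a different route: it first replaces $V$ by an object of $K_m(\catP)$ via the equivalence of Corollary~\ref{cor:proj periodic}, then uses heredity to see that each $B^{i+1}(V)\subset V^{i+1}$ is projective, so the short exact sequences $0\to Z^i(V)\to V^i\to B^{i+1}(V)\to 0$ split. This yields an honest direct-sum decomposition of $V$ \emph{in $C_m(\catB)$} into two-term complexes $B^i(V)\to Z^i(V)$, each quasi-isomorphic to $H^i(V)[-i]$. By contrast, you work with an arbitrary $V\in C_m(\catB)$ and build an external projective model $\bigoplus_i Q_i$ mapping to $V$ by a quasi-isomorphism. The paper's argument gives a sharper conclusion (a splitting at the level of complexes, not just in $D_m$) at the cost of invoking the $K_m(\catP)\simeq D_m(\catB)$ equivalence up front; your argument is more self-contained and is the direct periodic analogue of the classical proof that complexes over a hereditary category are formal. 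Both hinge on the same use of heredity: a subobject of a projective is projective, which is exactly what makes $K^i$ (for you) or $B^i(V)$ (for the paper) projective and allows the crucial lift or splitting.
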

\begin{proof}
See \cite[Lemma 5.1]{St18} for the case $m=1$.
Suppose $m\ge 2$.
We may assume that $V\in K_m(\catP)$.
Then we have exact sequence
\[
\epsilon_i : 0 \to Z^i(V) \to V^i \to B^{i+1}(V) \to 0
\]
in $\catB$ for any $i\in\bbZ_m$.
Since $\catB$ is hereditary,
$B^{i+1}(V) \subset V^{i+1}$ is projective,
and hence $\epsilon_i$ splits for any $i\in\bbZ_m$.
Setting
\[
A_i:=\left(\cdots \to 0 \to B^{i+1}(V)\to Z^{i+1}(V) \to 0 \to \cdots \right),
\]
where $Z^{i+1}(V)$ is in the degree $i$th,
we have $V \simeq \bigoplus_{0\le i \le m-1} A_i$ in $C_m(\catB)$.
Since $A_i \simeq H^i(V)[-i]$ in $D_m(\catB)$,
we have the conclusion.
\end{proof}

\begin{cor}
If $\catB$ is hereditary,
then the natural functor
$\pi : D^b(\catB) \to D_m(\catB)$
is dense.
\end{cor}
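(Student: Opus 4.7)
The plan is to deduce density almost immediately from the preceding Proposition \ref{prp:hered periodic}, which already provides a canonical decomposition of every object of $D_m(\catB)$ into a direct sum of (shifted) stalk complexes.

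First, given an arbitrary object $V \in D_m(\catB)$, I would pick a representative in $C_m(\catB)$ (abusing notation, still call it $V$) and invoke Proposition \ref{prp:hered periodic} to get an isomorphism
\[
V \simeq \bigoplus_{0 \le i \le m-1} H^i(V)[-i] \quad \text{in } D_m(\catB),
\]
where each $H^i(V) \in \catB$ is regarded as a stalk complex concentrated in degree $\ol{i} \in \bbZ_m$.

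Next, I would define a candidate preimage under $\pi$ by simply lifting each summand to a stalk complex in $D^b(\catB)$:
\[
X := \bigoplus_{0 \le i \le m-1} H^i(V)[-i] \in D^b(\catB),
\]
where now $H^i(V)$ sits in cohomological degree $i \in \bbZ$ (not $\bbZ_m$). Since $\pi$ is a triangulated functor it commutes with finite direct sums and with shifts; applied to a stalk complex concentrated in degree $i$ with $0 \le i \le m-1$, the functor $\pi$ produces the stalk $m$-periodic complex with the same object sitting in degree $\ol{i}$. Hence $\pi(X) \simeq \bigoplus_{0 \le i \le m-1} H^i(V)[-i]$ in $D_m(\catB)$, and combining with the displayed isomorphism above yields $\pi(X) \simeq V$.

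There is no real obstacle: the work has already been done in Proposition \ref{prp:hered periodic}, and the only thing to check is the essentially tautological statement that $\pi$ sends a stalk complex in degree $i \in \{0, \dots, m-1\}$ to the corresponding stalk $m$-periodic complex in degree $\ol{i}$, which is immediate from the definition of $\pi$.
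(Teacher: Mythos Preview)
Your proof is correct and takes essentially the same approach as the paper's own proof, which simply notes that by Proposition~\ref{prp:hered periodic} every object of $D_m(\catB)$ is a direct sum of stalk complexes, and that any stalk complex lies in the essential image of $\pi$. You have merely made the preimage $X$ explicit.
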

\begin{proof}
In this case, any periodic complex is a direct sum of stalk complexes.
Since any stalk complex is in the image of $\pi$,
we obtain the conclusion.
\end{proof}

\subsection{Periodic complexes as DG modules}\label{ss:periodic cpx as DG}
Let $\LL$ be an algebra.
We will explain that periodic complexes over $\LL$ can be considered as 
DG modules over the Laurent polynomial ring $\LL[t,t^{-1}]$ over $\LL$,
where $\LL[t,t^{-1}]$ is a graded algebra by setting $\deg(t):=m$.
Recall that a graded algebra is a DG algebra with the trivial differential.
\begin{lem}
Let $f:M\to N$ be a morphism in $C_{\dg}\left(\LL[t,t^{-1}]\right)$.
\begin{enua}
\item 
$M^i$ is a $\LL$-module for any $i\in \bbZ$.
\item
The right multiplication
\[
r_{t}: M \simto M,\quad m \mapsto m\cdot t
\]
induces an isomorphism $M^i\simto M^{i+m}$
of $\LL$-modules
for any $i\in \bbZ$.
\item
$d^{i+m}_M\circ r_t = r_t\circ d^i_M$ for any $i\in \bbZ$.
\item
$f\circ r_t= r_t\circ f$
\end{enua}
\end{lem}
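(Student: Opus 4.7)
The plan is to unpack the definition of a DG module over the DG algebra $\LL[t,t^{-1}]$ (viewed as a graded algebra, i.e., a DG algebra with zero differential) and to use two features of this algebra: first, $\LL \subset \LL[t,t^{-1}]^0$ sits in degree $0$, and second, $t$ is a central, invertible, homogeneous element of degree $m$ with $d(t)=0$. All four statements then follow directly from these observations; there is no serious obstacle, only a careful bookkeeping of degrees and sign conventions.

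For (1), since $\LL$ is concentrated in degree $0$, the action of any $\lambda\in\LL$ on $M$ preserves the grading and hence restricts to a $\bbk$-linear endomorphism of each $M^i$. The module axioms for $M$ over $\LL[t,t^{-1}]$ restrict to the module axioms for $M^i$ over $\LL$. For (2), $t\in\LL[t,t^{-1}]^m$ is homogeneous of degree $m$, so $r_t$ is a homogeneous $\bbk$-linear map of degree $m$, giving $r_t(M^i)\subset M^{i+m}$. Since $t$ commutes with $\LL$ (as $\LL[t,t^{-1}]=\LL\otimes_{\bbk}\bbk[t,t^{-1}]$), this map is $\LL$-linear; since $t^{-1}$ is a two-sided inverse of $t$, the map $r_{t^{-1}}:M^{i+m}\to M^i$ is an inverse.

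For (3), apply the Leibniz rule for the DG module structure on $M$: for $m \in M^i$,
\[
d_M(m\cdot t) = d_M(m)\cdot t + (-1)^{|m|}\, m\cdot d(t).
\]
Since the DG algebra $\LL[t,t^{-1}]$ has zero differential, $d(t)=0$, and the second term vanishes, yielding $d_M^{i+m}\circ r_t = r_t \circ d_M^i$. For (4), a morphism in $C_{\dg}(\LL[t,t^{-1}])$ is by definition a graded right $\LL[t,t^{-1}]$-linear map (of some degree $p$), so in particular $f(m\cdot t) = f(m)\cdot t$, which is precisely $f\circ r_t = r_t \circ f$.

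In sum, each statement is a one-line verification once the grading/differential on $\LL[t,t^{-1}]$ is made explicit; the content of the lemma is really to record these identifications so that the rest of Section \ref{ss:periodic cpx as DG} can freely pass between periodic complexes and DG $\LL[t,t^{-1}]$-modules.
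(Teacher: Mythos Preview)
Your proof is correct and follows essentially the same route as the paper's: degree~$0$ of $\LL[t,t^{-1}]$ is $\LL$ for (1), centrality and degree~$m$ of $t$ (plus invertibility) for (2), the Leibniz rule together with $d(t)=0$ for (3), and $\LL[t,t^{-1}]$-linearity of morphisms for (4). The only differences are cosmetic---you spell out the inverse $r_{t^{-1}}$ in (2) and the $\LL[t,t^{-1}]$-linearity in (4) a bit more explicitly than the paper does.
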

\begin{proof}
(1) follows from 
the fact that the degree $0$th part of $\LL[t,t^{-1}]$ is equal to $\LL$.
$r_t$ is $\LL$-linear since $t$ is a central element,
and it restricts to $M^i \simto M^{i+m}$ for any $i\in\bbZ$
because $t$ has the degree $m$.
Hence (2) holds.
Since
\[
d(vt)=d(v)t +(-1)^{|v|}v d(t)=d(v)t
\]
for any $v\in M$,
we obtain (3).
(4) is clear.
\end{proof}

\begin{prp}
There exists a DG equivalence
$C_{\dg}\left(\LL[t,t^{-1}]\right) \simto C_m(\Mod\LL)_{\dg}$.
\end{prp}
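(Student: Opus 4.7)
The plan is to construct an explicit DG functor $G : C_m(\Mod\LL)_{\dg} \to C_{\dg}(\LL[t,t^{-1}])$ and show it is a DG equivalence by verifying that it is fully faithful on morphism complexes and essentially surjective up to isomorphism in $Z^0$. On objects, I send an $m$-periodic complex $V = (V^{\ol{i}}, d_V^{\ol{i}})_{\ol{i}\in\bbZ_m}$ to the graded $\LL$-module $G(V) := \bigoplus_{i\in\bbZ} V^{\ol{i}}$ with $V^{\ol{i}}$ placed in degree $i$, with right multiplication by $t$ given by the tautological identification $V^{\ol{i}} = V^{\ol{i+m}}$, and with differential $d_{G(V)}^i := d_V^{\ol{i}}$. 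On a morphism $f = (f^{\ol{i}})_{\ol{i}} \in C_m(\Mod\LL)_{\dg}(V,W)^p$, I set $G(f)^i := f^{\ol{i}}$. DG-functoriality is then immediate: the $t$-equivariance of $G(f)$ is tautological since $t$ acts by an identity in each degree, and compositions together with the differential on $\dgHom$ are preserved componentwise by the Leibniz rule.

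For full faithfulness, the preceding lemma shows that any morphism $\phi : G(V) \to G(W)$ of degree $p$ in $C_{\dg}(\LL[t,t^{-1}])$ must commute with $r_t$, which under the identifications $G(V)^i = V^{\ol{i}} = G(V)^{i+m}$ forces $\phi^{i+m} = \phi^i$ for every $i \in \bbZ$. Hence $\phi$ is determined by its restrictions $(\phi^i)_{0 \le i \le m-1}$, which are precisely the components of a morphism in $C_m(\Mod\LL)_{\dg}(V,W)^p$. This bijection is evidently compatible with the DG structures, so it yields an isomorphism of morphism DG $\bbk$-modules. For essential surjectivity, given any $M \in C_{\dg}(\LL[t,t^{-1}])$, I use the $\LL$-linear isomorphisms $r_t : M^i \simto M^{i+m}$ to define an $m$-periodic complex $V_M$ by $V_M^{\ol{i}} := M^{i_0}$, where $i_0 \in \{0,1,\ldots,m-1\}$ is the canonical representative of $\ol{i}$, with differential $d_{V_M}^{\ol{i}} := d_M^{i_0}$ for $0 \le i_0 \le m-2$ and $d_{V_M}^{\ol{m-1}} := r_t^{-1} \circ d_M^{m-1}$. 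A closed degree-$0$ isomorphism $G(V_M) \simto M$ is then given in degree $i = i_0 + km$ by $r_t^k : M^{i_0} \to M^i$.

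The hard part, such as it is, will be the bookkeeping in essential surjectivity: one has to verify that the twist by $r_t^{-1}$ in the definition of $d_{V_M}^{\ol{m-1}}$ is exactly what is needed so that the isomorphism $G(V_M) \simto M$ intertwines the differentials and sends the tautological action of $t$ on $G(V_M)$ to $r_t$ on $M$. Given the identities $d_M^{i+m} \circ r_t = r_t \circ d_M^i$ and the $\LL$-linearity of $r_t$ supplied by the previous lemma, this reduces to a direct calculation on each graded piece. No homological input is required; the statement is essentially a repackaging of the periodicity imposed by the invertible degree-$m$ generator $t$.
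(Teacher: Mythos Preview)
Your argument is correct and follows essentially the same approach as the paper. The only cosmetic difference is direction: the paper defines the functor $M \mapsto V(M)$ from $C_{\dg}(\LL[t,t^{-1}])$ to $C_m(\Mod\LL)_{\dg}$ (exactly your $V_M$ construction, including the $r_{t^{-1}}$ twist at position $m-1$) and then simply asserts that it gives a DG equivalence, whereas you build the inverse $G$ first and supply the verification of full faithfulness and essential surjectivity that the paper leaves implicit.
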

\begin{proof}
Let $M\in C_{\dg}\left(\LL[t,t^{-1}]\right)$.
Define
\[
V(M)^{\ol{i}}:=M^i,\quad
d_{V(M)}^{\ol{i}}:=
\begin{cases}
d^i & i \ne m-1 \\
r_{t^{-1}} \circ d^{m-1} & i = m-1
\end{cases}
\]
Then $V(M):=\left(V(M)^{\ol{i}},d_{V(M)}^{\ol{i}} \right)_{0\le i \le m-1}$
is an $m$-periodic complex
since
\[
d^0 \circ r_{t^{-1}} \circ d^{m-1}=r_{t^{-1}} \circ d^m \circ d^{m-1}=0.
\]
This correspondence induces a DG equivalence between
$C_{\dg}\left(\LL[t,t^{-1}]\right)$ and $C_m(\Mod \LL)_{\dg}$.
\end{proof}

In particular, we have a triangulated equivalence
\[
D(\LL[t,t^{-1}]) \simto D_m(\Mod\LL), \quad \LL[t,t^{-1}] \mapsto \LL,
\]
since the above correspondence preserves acyclic.
In the reminder of this section,
we prove that the perfect derived category $\perf \LL[t,t^{-1}]$ 
corresponds to $D_m(\LL):=D_m(\catmod\LL)$ by the above equivalence.

\begin{fct}[{\cite{Zhao14}}]
Let $\LL$ be an algebra of finite global dimension.
Then
$D_m(\LL)$ is the triangulated hull of the orbit category $D^b(\LL)/\Sg^m$
of the bounded derived category $D^b(\LL)$ by $\Sg^m$.
\end{fct}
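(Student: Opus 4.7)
The plan is to combine Corollary \ref{cor:Hom periodic} with Keller's DG-categorical construction of the triangulated hull of an orbit category, and to show that the relevant DG orbit algebra is formal, quasi-isomorphic to $\LL[t,t^{-1}]$ with $\deg(t)=m$.

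First I would exhibit the fully faithful functor from the orbit category to $D_m(\LL)$. By Corollary \ref{cor:Hom periodic}(1), for $V,W\in C^b(\catmod\LL)$ we have
\[
D_m(\LL)(\pi V,\pi W)=\bigoplus_{i\in\bbZ}D^b(\LL)(V,W[mi]),
\]
which is exactly the morphism space in the orbit category $D^b(\LL)/\Sg^m$. Hence $\pi$ factors through $D^b(\LL)/\Sg^m$ and induces a fully faithful $\bbk$-linear functor $D^b(\LL)/\Sg^m \inj D_m(\LL)$. Moreover, by Corollary \ref{cor:proj periodic}(1), $\tria_{K_m(\catmod\LL)}(\proj\LL)=K_m(\proj\LL)\simeq D_m(\LL)$, so the essential image of $\pi$ generates $D_m(\LL)$ as a triangulated category. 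These two properties characterize the triangulated hull provided $D_m(\LL)$ has an algebraic DG enhancement with the correct underlying DG algebra.

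Next I would identify the candidate for that DG enhancement. Keller's construction of the triangulated hull of $\perf(\LL)/\Sg^m$ starts from a DG enhancement $\catA$ of $\perf(\LL)$ (which exists since $\gd\LL<\infty$, so $D^b(\LL)\simeq\perf(\LL)$) together with a strict DG auto-equivalence $F$ lifting $\Sg^m$. The triangulated hull is then $\perf(B)$, where $B$ is the DG orbit algebra
\[
B := \bigoplus_{i\in\bbZ}\catA(\LL,F^i\LL),
\]
equipped with the evident composition and differential. A direct calculation gives $H^*(B)\simeq \LL[t,t^{-1}]$ as graded algebras, with $t$ corresponding to the degree-$m$ shift. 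On the other hand, \S\ref{ss:periodic cpx as DG} gives a DG equivalence $C_{\dg}(\LL[t,t^{-1}])\simeq C_m(\Mod\LL)_{\dg}$, restricting to $\perf(\LL[t,t^{-1}])\simeq D_m(\LL)$. Hence the desired identification reduces to showing that $B$ is formal, i.e.\ quasi-isomorphic to its cohomology $\LL[t,t^{-1}]$.

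The hard part is precisely this formality statement: we must prove that the DG algebra $B$ is quasi-isomorphic to the graded algebra $\LL[t,t^{-1}]$ with trivial differential. This is exactly the intrinsic formality problem addressed by Theorem \ref{thm:C} (Corollary \ref{cor:formal}). Granting Fact \ref{fct:if}, it suffices to prove
\[
HH^{q,2-q}\bigl(\LL[t,t^{-1}]\bigr)=0\quad\text{for all }q\geq3,
\]
and under the finite global dimension assumption on $\LL$ (which yields $\pd_{\LL^{\op}\otimes\LL}\LL<\infty$ after a perfect-base-field reduction, cf.\ Theorem \ref{thm:B}) this vanishing can be established by a bar-resolution computation, taking advantage of the fact that multiplication by $t$ shifts degree by $m$ so that the $(q,2-q)$-bigraded piece of Hochschild cochains collapses for $q$ large compared to $\pd\LL/m$. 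Once formality is in hand, $\perf(B)\simeq \perf(\LL[t,t^{-1}])\simeq D_m(\LL)$, so $D_m(\LL)$ is triangulated equivalent to the triangulated hull of $D^b(\LL)/\Sg^m$, completing the proof.
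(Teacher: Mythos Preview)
The paper does not prove this statement; it is quoted as a Fact from \cite{Zhao14} and used without argument (its only role in the paper is to guarantee that $D_m(\LL)$ is idempotent complete, just before Proposition~\ref{lem:perf}). So there is no proof in the paper to compare your proposal against.

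That said, your proposed route has a genuine gap. The formality step you invoke, namely Corollary~\ref{cor:formal}, requires $\pd_{\LL^e}\LL \le m$, not merely $\pd_{\LL^e}\LL < \infty$. Concretely, Lemma~\ref{lem:hoch} gives $HH^{q,2-q}(\LL[t,t^{-1}])=0$ only when $q \ge d+2$ or $q \not\equiv 2 \pmod m$; the first $q \ge 3$ with $q \equiv 2 \pmod m$ is $q = m+2$, and to kill $HH^{m+2,-m}$ you need $m+2 \ge d+2$, i.e.\ $d \le m$. If $\gd\LL$ (hence $d$, over a perfect field) exceeds $m$, your argument does not go through, yet the Fact is asserted for all such $\LL$. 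Your parenthetical ``perfect-base-field reduction'' addresses whether $d$ is finite, not whether $d \le m$; and over a non-perfect field even finiteness of $d$ can fail for an algebra of finite global dimension. So the formality-based approach proves a strictly weaker statement than the one cited. Note also that using Corollary~\ref{cor:formal} here flirts with circularity: the Fact is invoked in the paper \emph{before} Proposition~\ref{lem:perf}, which in turn feeds into the proof of the main theorem where Corollary~\ref{cor:formal} is applied.

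Zhao's argument in \cite{Zhao14} avoids formality entirely: it compares a DG enhancement of $D_m(\LL)$ directly with Keller's DG orbit category and reads off the triangulated-hull identification from that comparison, with no Hochschild vanishing hypothesis. The first two paragraphs of your proposal---the fully faithful embedding of $D^b(\LL)/\Sg^m$ into $D_m(\LL)$ via Corollary~\ref{cor:Hom periodic}, and generation via Corollary~\ref{cor:proj periodic}---are correct and are indeed the easy half; what is missing is a replacement for the formality step that works for arbitrary finite global dimension.
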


In particular, $D_m(\LL)$ is idempotent complete 
by the construction of the triangulated hull of an orbit category.
See \cite[\S 2.4]{Fu12} and \cite{Kel05} for details.

\begin{prp}\label{lem:perf}
Let $\LL$ be a $\bbk$-algebra of finite global dimension.
\begin{enua}
\item
$\thick_{D_m(\Mod \LL)}(\LL)=\ol{K_m(\proj \LL)}$,
where $\ol{K_m(\proj \LL)}$ is the essential image of 
the natural inclusion $K_m(\proj \LL) \inj D_m(\Mod \LL)$.
\item
The triangulated equivalence $D(\LL[t,t^{-1}])\simto D_m(\Mod\LL) $
induces a triangulated equivalence $\perf(\LL[t,t^{-1}]) \simto D_m(\LL)$.
\end{enua}
\end{prp}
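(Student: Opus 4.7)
The plan is to prove part (1) by double containment, and then deduce part (2) formally from (1) together with the triangulated equivalence $D(\LL[t,t^{-1}])\simto D_m(\Mod\LL)$ constructed just above.

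For the inclusion $\ol{K_m(\proj \LL)} \subseteq \thick_{D_m(\Mod\LL)}(\LL)$, I would invoke Corollary \ref{cor:proj periodic}(1) applied to $\catB = \catmod\LL$, which gives $K_m(\proj\LL) = \tria_{K_m(\catmod\LL)}(\proj\LL)$. Since every finitely generated projective $\LL$-module is a direct summand of some $\LL^n$, its stalk complex lies in $\add_{D_m(\Mod\LL)}(\LL) \subseteq \thick_{D_m(\Mod\LL)}(\LL)$. Applying the triangulated functor $K_m(\catmod\LL) \to D_m(\Mod\LL)$ preserves triangulated generation, so every object of $\ol{K_m(\proj\LL)}$ lands in $\thick_{D_m(\Mod\LL)}(\LL)$.

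The reverse inclusion is the substantive step: it suffices to exhibit $\ol{K_m(\proj\LL)}$ as a thick subcategory of $D_m(\Mod\LL)$ containing $\LL$. Containment of $\LL$ is clear and the triangulated property is automatic from the triangulated nature of the inclusion. The key point is closure under direct summands. For this I would first use Corollary \ref{cor:proj periodic}(2) with $\catB = \Mod\LL$ and $\catP = \Proj\LL$ (valid since $\gd \Mod\LL = \gd\LL < \infty$) to get $K_m(\proj\LL) \subseteq K_m(\Proj\LL) \subseteq K_{m,p}(\Mod\LL)$, so that the natural functor $K_m(\proj\LL) \to D_m(\Mod\LL)$ is fully faithful. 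Since $\proj\LL$ is Karoubian, so is $K_m(\proj\LL)$; any idempotent in $D_m(\Mod\LL)(P,P)$ for $P \in K_m(\proj\LL)$ therefore lifts to a splitting idempotent in $K_m(\proj\LL)(P,P)$, and its image in $D_m(\Mod\LL)$ realizes the desired summand inside $\ol{K_m(\proj\LL)}$.

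For part (2), the triangulated equivalence $D(\LL[t,t^{-1}]) \simto D_m(\Mod\LL)$ sends the representable $\LL[t,t^{-1}]$ to $\LL$, so it restricts to a triangulated equivalence $\perf(\LL[t,t^{-1}]) = \thick_{D(\LL[t,t^{-1}])}(\LL[t,t^{-1}]) \simto \thick_{D_m(\Mod\LL)}(\LL)$. By part (1) the target equals $\ol{K_m(\proj\LL)}$, and by Corollary \ref{cor:proj periodic}(3) applied to $\catB = \catmod\LL$ the natural functor $K_m(\proj\LL) \to D_m(\catmod\LL) = D_m(\LL)$ is an equivalence, so composing with the fully faithful inclusion $K_m(\proj\LL) \to D_m(\Mod\LL)$ identifies $\ol{K_m(\proj\LL)}$ with $D_m(\LL)$. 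The chief obstacle I anticipate is the idempotent splitting in the reverse inclusion of (1); once the full faithfulness of $K_m(\proj\LL) \to D_m(\Mod\LL)$ is established via $K$-projectivity, the remaining bookkeeping is essentially formal.
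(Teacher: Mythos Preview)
Your strategy coincides with the paper's: double containment for (1), with $\ol{K_m(\proj\LL)}\subseteq\thick(\LL)$ coming from Corollary~\ref{cor:proj periodic}(1) and $\proj\LL\subseteq\add(\LL)$, and the reverse inclusion reduced to showing that $\ol{K_m(\proj\LL)}$ is a thick subcategory; part (2) then follows formally. Your explicit use of Corollary~\ref{cor:proj periodic}(2) for $\catB=\Mod\LL$ to obtain full faithfulness of $K_m(\proj\LL)\hookrightarrow D_m(\Mod\LL)$ actually makes precise a step the paper leaves implicit.

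The only substantive divergence is exactly the point you single out as the chief obstacle. You justify closure under direct summands by asserting that $K_m(\proj\LL)$ is Karoubian \emph{because} $\proj\LL$ is. This is the periodic analogue of the Balmer--Schlichting result for $K^b$, but it is neither proved in the paper nor obviously reducible to it: the standard $K^b$ argument passes through telescopes in $K^-$, and that construction has no direct counterpart for $m$-periodic complexes. The paper instead obtains idempotent completeness by combining the equivalence $K_m(\proj\LL)\simeq D_m(\LL)$ of Corollary~\ref{cor:proj periodic}(3) with the fact, recorded immediately before the proposition via \cite{Zhao14}, that $D_m(\LL)$ is the triangulated hull of the orbit category $D^b(\LL)/\Sg^m$ and is therefore idempotent complete by construction. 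Since you already invoke $K_m(\proj\LL)\simeq D_m(\LL)$ in your argument for (2), you can simply move that identification earlier and replace the unproved Karoubian implication by the reference to \cite{Zhao14}; everything else in your proposal then goes through unchanged.
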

\begin{proof}
Since $K_m(\proj\LL)$ is triangulated equivalent to $D_m(\LL)$,
$\ol{K_m(\proj\LL)}$ is a thick subcategory of $D_m(\Mod\LL)$.
Hence $\thick_{D_m(\Mod \LL)}(\LL) \subset \ol{K_m(\proj \LL)}$
as $\LL \in \ol{K_m(\proj \LL)}$.
By Corollary \ref{cor:proj periodic},
we have $K_m(\proj\LL)=\thick_{K_m(\proj\LL)}(\LL)$.
Thus we have $\ol{K_m(\proj\LL)} \subset \thick_{D_m(\Mod\LL)}(\LL)$.
Therefore we obtain (1).
(2) follows from (1).
\end{proof}

\section{Proof of the main theorem}\label{s:main}
Let $\catT$ be a triangulated category.
Recall that $\catT$ is said to be $m$-periodic
if its suspension functor $\Sg$ satisfies $\Sg^m\simeq \Id_{\catT}$ as additive functors.
Suppose $\catT$ has an $m$-periodic tilting object $T$ (Definition \ref{dfn:m-tilt})
and its endomorphism algebra $\LL :=\End_{\catT}(T)$
is a finite dimensional $\bbk$-algebra.
Since $T$ is a thick generator of $\catT$,
there exists a DG algebra $B$ and 
a triangulated equivalence $\catT \simeq \perf(B)\subset D(B)$
such that
$H^*(B)=\bigoplus_{i\in\bbZ}H^i(B)\simeq \bigoplus_{i\in\bbZ}\catT(T, \Sg^iT)$
by Fact \ref{fct:morita}.
By the definition of
$m$-periodic categories and $m$-periodic tilting objects,
we have an isomorphism of graded algebras
\[
H^*(B)
\simeq \bigoplus_{i\in\bbZ}\catT(T, \Sg^iT)
\simeq \bigoplus_{i\in m\bbZ}\catT(T, \Sg^iT)
\simeq \bigoplus_{i\in m\bbZ}\catT(T, T)
\simeq \bigoplus_{i\in m\bbZ}\LL
\simeq \LL[t,t^{-1}],
\]
where $\LL[t,t^{-1}]$ is the Laurent polynomial ring  over $\LL$
which is regard as a graded algebra by setting $\deg(t):=m$.
On the other hand, we have triangulated equivalences $D(\LL[t,t^{-1}]) \simto D_m(\Mod \LL)$ and $\perf(\LL[t,t^{-1}]) \simto D_m(\LL)$ by Lemma \ref{lem:perf}.
Hence we have the following diagram.
\[
\begin{tikzcd}
&D(B) \arr{r,"\sim","?"',dashed} & D(\LL[t,t^{-1}]) \arr{r,"\sim"} & D_m(\Mod \LL)\\
\catT \arr{r,"\sim"} & \perf(B) \arr{r,"\sim","?"',dashed} \arr{u,hook} &\perf(\LL[t,t^{-1}]) \arr{u,hook} \arr{r,"\sim"} & D_m(\LL). \arr{u,hook}
\end{tikzcd}
\]
If $B$ and $\LL[t,t^{-1}]$ are quasi-isomorphic as DG algebras,
there exists a triangulated equivalence $D(B) \simeq D(\LL[t,t^{-1}])$ 
by Lemma \ref{fct:DG qis},
and thus it induces a triangulated equivalences $\perf(B) \simto \perf(\LL[t,t^{-1}])$
between perfect derived categories.
Hence to prove $\catT \simeq D_m(\LL)$,
it is enough to show that $B$ is quasi-isomorphic to $\LL[t,t^{-1}]\simeq H^*(B)$,
that is, $B$ is a formal DG algebra (Definition \ref{dfn:dg:formal}).
Recall that
a sufficient condition of being formal is given by
the vanishing of the Hochschild cohomology of its cohomology algebra (\S \ref{ss:Hoc}).
Thus we now compute the Hochschild cohomology of $\LL[t,t^{-1}]$.

\begin{lem}\label{lem:hoch}
Let $\LL$ be an algebra which has a finite projective dimension $d$ as a $\LL$-bimodule.
We regard the Laurent polynomial ring $\LL[t,t^{-1}]$ over $\LL$ 
as a graded algebra by setting $\deg(t):=m$.
Let $p,q \in \bbZ$ and consider the Hochschild cohomology $HH^{p,q}(\LL[t, t^{-1}])$.
If $p\geq d+2$ or $q\not\equiv 0 \mod m$, then we have 
\[
 HH^{p,q}\left( \LL [t, t^{-1}] \right) = 0.
\]
\end{lem}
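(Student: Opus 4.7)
The plan is to construct an explicit projective resolution of $A := \Lambda[t,t^{-1}]$ of length $d+1$ in $\Gr A^e$, from which both vanishing claims follow by inspection of the Hom complex computing $\Ext^p_{\Gr A^e}(A, A(q))$.

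Set $R := \bbk[t,t^{-1}]$. Since $t$ is central in $A$, we have isomorphisms $A \cong \Lambda \otimes_\bbk R$ of graded algebras (with $\Lambda$ concentrated in degree $0$) and $A^e \cong \Lambda^e \otimes_\bbk R^e$. I would pick any projective resolution
\[
0 \to P_d \to P_{d-1} \to \cdots \to P_0 \to \Lambda \to 0
\]
in $\Mod\Lambda^e$ (which exists since $\pd_{\Lambda^e}\Lambda = d$), and combine it with the length-$1$ graded Koszul resolution
\[
0 \to R^e(-m) \xrightarrow{\,t \otimes 1 - 1 \otimes t\,} R^e \to R \to 0
\]
in $\Gr R^e$. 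Taking the total complex of their $\bbk$-tensor product produces a projective resolution of $A \cong \Lambda \otimes R$ in $\Gr A^e$ whose degree-$n$ term is
\[
(P_n \otimes_\bbk R^e)\ \oplus\ (P_{n-1} \otimes_\bbk R^e(-m)),
\]
so of total length $d+1$. This yields $HH^{p,q}(A) = 0$ for $p \geq d+2$ at once.

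For the second vanishing, I would compute each term of the complex $\Hom_{\Gr A^e}(-, A(q))$ applied to this resolution. Each such term has the form $\Hom_{\Gr A^e}(P_i \otimes_\bbk R^e(-km), A(q))$ for some $k \in \{0,1\}$. Using the identification $P_i \otimes_\bbk R^e \cong A^e \otimes_{\Lambda^e} P_i$, the restriction-extension adjunction along $\Lambda^e \hookrightarrow A^e$, and the standard shift identity $\Hom_{\Gr A^e}(M(-m), N) = \Hom_{\Gr A^e}(M, N(m))$, this reduces to
\[
\Hom_{\Gr \Lambda^e}(P_i, A(q + km)) = \Hom_{\Lambda^e}(P_i, A^{q+km}),
\]
the second equality because $P_i$ sits in degree $0$. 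Since $A^j \cong \Lambda \cdot t^{j/m}$ when $m \mid j$ and $A^j = 0$ otherwise, and since $m \nmid q$ forces $m \nmid q + km$ for both $k \in \{0,1\}$, every such term vanishes.

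The step requiring the most care is confirming that the total complex of $P_\bullet \otimes_\bbk Q_\bullet$ is genuinely a projective resolution of $A$ in $\Gr A^e$, and not merely a complex of projectives. This is a routine K\"unneth-type argument using the $\bbk$-flatness of the $P_i$ and the two-term Koszul complex, so the main work is careful bookkeeping with gradings rather than any essential mathematical obstacle.
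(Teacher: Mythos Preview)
Your proposal is correct and follows essentially the same route as the paper: both take a length-$d$ projective resolution of $\Lambda$ over $\Lambda^e$, tensor it with the two-term Koszul resolution $0 \to R^e(-m) \to R^e \to R \to 0$ of $R=\bbk[t,t^{-1}]$ over $R^e$, and read off the two vanishing statements from the resulting length-$(d+1)$ resolution of $A$ over $A^e$. Your write-up is in fact a bit more careful than the paper's in spelling out the adjunction that identifies $\Hom_{\Gr A^e}(P_i\otimes_\bbk R^e(-km),A(q))$ with $\Hom_{\Lambda^e}(P_i,A^{q+km})$, and in flagging the K\"unneth step; the paper simply asserts that the tensor product is a projective resolution and that its terms are ``concentrated in degrees $0$ and $m$'' (meaning their generators are), so no substantive difference.
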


\begin{proof}
Let $A := \LL[t, t^{-1}]$.
Since $\pd_{\LL^e} \LL = d$,
we have a projective resolution
\[
F_{\bu}: 0\to F_d \to \dots \to F_0 \to \LL \to 0
\]
of $\LL$ of length $d$ as a $\LL^e$-module.
Let $\bbk[t,t^{-1}]$ be the Laurent polynomial ring over $\bbk$ with $\deg(t)=m$.
Then we have $S:= \bbk[t,t^{-1}]^e \simeq \bbk[x, x^{-1}, y, y^{-1}]$
as graded algebras with $\deg(x)=\deg(y)=m$.
As a graded $S$-module, $\bbk[t, t^{-1}]$ has a free resolution
\[
	G_{\bu}: 0 \to S(-m) \xr{f} S \xr{g} \bbk[t, t^{-1}] \to 0
\]
of length $2$,
where $f$ is a multiplication map defined by $x-y$ 
and $g$ is a substitution map defined by $x \mapsto t$ and $y \mapsto t$.
As a graded $A^e$-module,
$H_{\bu}:= F_{\bu} \otimes G_{\bu}$ is a projective resolution 
of $A= \LL \otimes \bbk[t, t^{-1}]$ of length $d+1$.
Calculating the Hochschild cohomology of $A$ 
by using the projective resolution $H_{\bu}$,
we have
\[
	HH^{p,q}(A)
	=\Ext^p_{\Gr A^e}(A, A(q))
	=H^p\left( \Hom_{\Gr A^e} (H_{\bu}, A(q))\right).
\]
Since length of $H_{\bu}$ is $d+1$,
$HH^{p,q}(A)=0$ for $p\geq d+2$.
Note that $H_{i}$ is concentrated in degrees $0$ and $m$ for all $i$,
and $\LL[t, t^{-1}]$ is concentrated in degrees of multiples of $m$.
For $q \not\equiv 0 \mod m$, $\Hom_{\Gr A^e} (H_{\bu}, A(q))=0$
and hence $HH^{p,q}(A)=0$.
\end{proof}

\begin{cor}\label{cor:formal}
Under the assumption of Lemma \ref{lem:hoch},
$\LL[t, t^{-1}]$ is intrinsically formal if $d \leq m$.
\end{cor}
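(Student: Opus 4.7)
The plan is to combine Lemma \ref{lem:hoch} with the intrinsic formality criterion Fact \ref{fct:if}: to prove that $\LL[t,t^{-1}]$ is intrinsically formal it suffices to verify the vanishing
\[
HH^{q,2-q}\bigl(\LL[t,t^{-1}]\bigr)=0 \quad \text{for all } q\geq 3.
\]
So the proof reduces to a bookkeeping check on the pair $(p,q)=(q,2-q)$ against the two vanishing regions supplied by Lemma \ref{lem:hoch}, namely $p\geq d+2$ and $q\not\equiv 0 \pmod m$.

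First, I would split the range $q\geq 3$ into two cases. In the high range $q\geq d+2$, the homological-degree condition of Lemma \ref{lem:hoch} immediately kicks in and gives $HH^{q,2-q}=0$, regardless of the internal degree $2-q$. The remaining range is the finite window $3\leq q\leq d+1$, where the internal degree $2-q$ must be checked against $m\bbZ$.

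In this finite window, the hypothesis $d\leq m$ gives $q\leq m+1$, so $2-q$ lies in the interval $[1-m,-1]$. An integer in this interval is divisible by $m$ only if it equals $0$ or $-m$, and these would force $q=2$ or $q=m+2$, both outside $[3,m+1]$ (here I use $m\geq 1$, so $m+2>m+1$). Hence $2-q\not\equiv 0 \pmod m$ throughout this window, and Lemma \ref{lem:hoch} again produces the desired vanishing.

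Having verified $HH^{q,2-q}(\LL[t,t^{-1}])=0$ for every $q\geq 3$, Fact \ref{fct:if} yields the conclusion. There is no serious obstacle here; the only point requiring care is the elementary modular arithmetic in the finite window $3\leq q\leq d+1$, where the inequality $d\leq m$ is used precisely to ensure that the residue class $q\equiv 2\pmod m$ is avoided.
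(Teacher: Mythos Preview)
Your proof is correct and follows essentially the same approach as the paper: both invoke Fact~\ref{fct:if} and verify $HH^{q,2-q}(\LL[t,t^{-1}])=0$ for all $q\geq 3$ by splitting into the range $q\geq d+2$ (handled by the first vanishing region of Lemma~\ref{lem:hoch}) and the finite window $3\leq q\leq d+1$ (handled by the second, using $d\leq m$ to rule out $q\equiv 2\pmod m$). Your treatment of the finite window is slightly more explicit than the paper's one-line remark ``since $m+2\geq d+2$'', but the underlying argument is identical.
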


\begin{proof}
By Lemma \ref{lem:hoch}, $HH^{q, 2-q}(\LL[t, t^{-1}])=0$
if $q\geq d+2$ or $q \not\equiv 2 \mod m$.
Since $m+2 \geq d+2$ by the assumption,
$HH^{q, 2-q}(\LL[t, t^{-1}])=0$ for $q \geq 3$.
Hence $\LL[t, t^{-1}]$ is intrinsically formal by Fact \ref{fct:if}.
\end{proof}

By the discussion so far, we have the following theorem.
\begin{thm}\label{thm:Main}
Let $\catT$ be an idempotent complete algebraic $m$-periodic triangulated category.
Suppose $\catT$ has an $m$-periodic tilting object $T$
and its endomorphism algebra $\LL :=\End_{\catT}(T)$
is a finite dimensional $\bbk$-algebra.
If $\pd_{\LL^e}\LL \le m$,
then there exists a triangulated equivalence
\[
\catT\simeq D_m\left( \Lambda \right).
\]
\qed
\end{thm}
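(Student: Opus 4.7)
The plan is to follow exactly the roadmap the author sketches in the paragraph preceding the theorem statement, using Fact \ref{fct:morita} to realize $\catT$ as $\perf(B)$ for some DG algebra $B$, identifying the cohomology ring $H^*(B)$ with the Laurent polynomial ring $\LL[t,t^{-1}]$, invoking intrinsic formality (Corollary \ref{cor:formal}) to replace $B$ by this Laurent polynomial ring on the nose, and finally identifying $\perf(\LL[t,t^{-1}])$ with $D_m(\LL)$ via Proposition \ref{lem:perf}.

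First, since $\catT$ is idempotent complete and algebraic and $T$ is a thick generator, Fact \ref{fct:morita} provides a DG algebra $B$ together with a triangulated equivalence $\catT \simeq \perf(B)$ and a graded algebra isomorphism $H^*(B) \simeq \bigoplus_{i\in\bbZ} \catT(T, \Sg^i T)$. Next I would identify this graded ring with $\LL[t,t^{-1}]$: the $m$-periodicity gives $\Sg^m \simeq \Id_\catT$, so $\catT(T,\Sg^iT) \simeq \catT(T,\Sg^{i \bmod m}T)$, and the tilting hypothesis $\catT(T,\Sg^iT)=0$ for $i\notin m\bbZ$ reduces the direct sum to the multiples of $m$, each of which is a copy of $\LL$. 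The resulting ring is $\bigoplus_{i\in\bbZ}\LL\cdot t^i$, which is $\LL[t,t^{-1}]$ with $\deg(t)=m$. This step requires checking that the $\bbZ_m$-equivariance isomorphism assembles into a genuine graded \emph{algebra} isomorphism, which follows from the fact that $\Sg^m \simeq \Id_\catT$ may be chosen so that the natural transformation is compatible with composition; this is really the one delicate bookkeeping point, since one must ensure the multiplicative structure on $\bigoplus_i \catT(T,\Sg^iT)$ matches the usual one on the Laurent polynomial ring.

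Once $H^*(B)\simeq \LL[t,t^{-1}]$ is established as graded algebras, the hypothesis $\pd_{\LL^e}\LL \le m$ lets me apply Corollary \ref{cor:formal} to conclude that $\LL[t,t^{-1}]$ is intrinsically formal. Therefore $B$ is quasi-isomorphic to $H^*(B)\simeq \LL[t,t^{-1}]$ as DG algebras. By Fact \ref{fct:DG qis} this yields triangulated equivalences
\[
D(B) \simeq D(\LL[t,t^{-1}]), \qquad \perf(B) \simeq \perf(\LL[t,t^{-1}]).
\]

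Finally, Proposition \ref{lem:perf}(2) gives $\perf(\LL[t,t^{-1}])\simeq D_m(\LL)$, and chaining equivalences yields $\catT \simeq \perf(B) \simeq \perf(\LL[t,t^{-1}]) \simeq D_m(\LL)$, as desired. The main obstacle is really the first step, i.e., producing the graded algebra isomorphism $H^*(B) \simeq \LL[t,t^{-1}]$ rather than just a graded $\bbk$-module isomorphism; everything afterwards is a formal composition of the cited results. One technical remark worth noting is that the finite dimensionality of $\LL$, together with $\pd_{\LL^e}\LL \le m$, ensures that Lemma \ref{lem:hoch} applies so that Corollary \ref{cor:formal} can indeed be invoked.
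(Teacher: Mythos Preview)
Your proposal is correct and follows essentially the same route as the paper: the argument preceding the theorem statement in \S\ref{s:main} is exactly the chain Fact~\ref{fct:morita} $\Rightarrow$ $H^*(B)\simeq\LL[t,t^{-1}]$ $\Rightarrow$ Corollary~\ref{cor:formal} $\Rightarrow$ Fact~\ref{fct:DG qis} $\Rightarrow$ Proposition~\ref{lem:perf}(2). The only point you might make explicit is that $\pd_{\LL^e}\LL\le m$ forces $\gd\LL\le m<\infty$ (by tensoring a bimodule projective resolution of $\LL$ with any module), which is needed for Proposition~\ref{lem:perf} to apply; the paper leaves this implicit as well.
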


In the reminder of this section,
we give a restatement of Theorem \ref{thm:Main} in a convenient form.
We first recall homologically smoothness of algebras.
We refer to \cite[Section 3]{RR20} for a detailed explanation.
\begin{dfn}
Let $\LL$ be an algebra.
\begin{enua}
\item $\LL$ is \emph{homologically smooth} of dimension $d$
if $\LL$ has finite projective dimension equal to $d$ as an $\LL^e$-module.
\item $\LL$ is \emph{separable} if $\LL$ 
is homologically smooth of dimension $0$.
\end{enua}

For finite dimensional algebras,
we have a useful characterization of homologically smoothness.
\end{dfn}
\begin{fct}[{\cite[Corollary 3.19, 3.22]{RR20}}]\label{fct:HS}
A finite dimensional algebra $\LL$ is homologically smooth of dimension $d$
if and only if
$\LL/\rad \LL$ is separable and $\gd \LL=d$
\end{fct}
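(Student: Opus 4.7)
The plan is to establish the equivalence in two directions after proving a common inequality. Throughout, I write $S := \LL/\rad\LL$ and $d := \pd_{\LL^e}\LL$.

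First, I would prove the always-valid inequality $\gd \LL \le \pd_{\LL^e}\LL$. Take a projective resolution $P_\bu \to \LL$ in $\Mod \LL^e$. Each $P_i$ is a summand of a free $\LL^e$-module, hence is projective (in particular, flat) as a left $\LL$-module and as a right $\LL$-module. For any right $\LL$-module $M$, the complex $M\otimes_\LL P_\bu$ therefore remains exact and consists of projective right $\LL$-modules (since $M\otimes_\LL (\LL\otimes_{\bbk}\LL) \simeq M\otimes_{\bbk}\LL$ is right-$\LL$-free), yielding $\pd_\LL M \le \pd_{\LL^e}\LL$, and so $\gd \LL \le \pd_{\LL^e}\LL$.

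Next I would handle the direction ($\Leftarrow$): assume $S$ is separable and $\gd \LL = d$. Separability gives a Wedderburn-Malcev splitting $\LL = S \oplus \rad \LL$ as $S$-$S$-bimodules; in particular $\LL$ is $S^e$-projective on both sides. Then the relative two-sided bar resolution
\[
\cdots \longto \LL\otimes_S \LL\otimes_S \LL \longto \LL\otimes_S \LL \longto \LL \longto 0
\]
has $\LL^e$-projective terms (because $\LL\otimes_S \LL$ is $\LL^e$-projective when $S$ is separable, using that $S^e$ is semisimple), and via a standard change-of-rings spectral sequence the first term beyond $\gd \LL$ is trivial, giving $\pd_{\LL^e}\LL \le \gd \LL = d$. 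Combined with the common inequality, this gives homological smoothness of dimension $d$.

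For the direction ($\Rightarrow$), suppose $\pd_{\LL^e}\LL = d < \infty$. The common inequality already gives $\gd \LL \le d$. The subtle step is proving that $S$ is separable. I would use base change: for any field extension $\bbk'/\bbk$, applying $-\otimes_{\bbk}\bbk'$ to a projective $\LL^e$-resolution of $\LL$ yields a projective $(\LL\otimes \bbk')^e$-resolution of $\LL\otimes \bbk'$, so $\pd_{(\LL\otimes\bbk')^e}(\LL\otimes\bbk') \le d$. In particular $\gd(\LL\otimes\bar{\bbk})<\infty$. If $S$ were not separable, $S\otimes \bar\bbk$ would fail to be semisimple, producing a nonzero nilpotent element in $\rad(\LL\otimes\bar\bbk)/(\rad\LL\otimes\bar\bbk)$; tracking through, one shows $(S\otimes\bar\bbk)\otimes\bar\bbk$-type self-Ext groups cannot vanish in large degree, contradicting finite global dimension. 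Concretely, for a finite-dimensional semisimple algebra $T$, one checks that $\pd_{T^e}T < \infty$ forces $\pd_{T^e}T = 0$, i.e., $T$ is separable; applying this to $T = S\otimes\bar\bbk$ after suitable reduction yields separability of $S$. Finally, with $S$ separable, direction ($\Leftarrow$) gives $\pd_{\LL^e}\LL \le \gd \LL$, forcing the equality $\gd \LL = d$.

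The main obstacle will be the separability implication in direction ($\Rightarrow$): controlling how $\rad$ interacts with base change and extracting separability from the mere finiteness of $\pd_{\LL^e}\LL$. Everything else follows a relatively mechanical bar/spectral-sequence pattern, but this step genuinely uses that the ground field is fixed and that non-separability produces an obstruction of infinite homological type in the bimodule category.
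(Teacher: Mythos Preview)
The paper does not give its own proof of this statement; it is recorded as a \emph{Fact} with a citation to \cite[Corollary 3.19, 3.22]{RR20}, so there is nothing in the paper to compare your argument against beyond the bare reference.

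Your outline is broadly the standard one and is correct in spirit. The inequality $\gd\LL \le \pd_{\LL^e}\LL$ is exactly as you say. For ($\Leftarrow$), your invocation of ``a standard change-of-rings spectral sequence'' is vague; the clean route is the isomorphism $\Ext^n_{\LL^e}(\LL,\Hom_\bbk(M,N)) \cong \Ext^n_\LL(M,N)$, which, once $S=\LL/\rad\LL$ is separable (so that every finite-dimensional $\LL^e$-module is built from modules of the form $\Hom_\bbk(M,N)$), immediately gives $\pd_{\LL^e}\LL \le \gd\LL$. For ($\Rightarrow$), your base-change idea works, but the quickest way to extract separability of $S$ is to note that $S$ is semisimple, hence $S^e$ is Frobenius (self-injective), so any $S^e$-module of finite projective dimension is projective; one then only needs to pass from $\pd_{\LL^e}\LL<\infty$ to $\pd_{S^e}S<\infty$, which is a change-of-rings step along the surjection $\LL^e \to S^e$. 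Your detour through $\bar\bbk$ is not wrong, but the contradiction you sketch (``self-Ext groups cannot vanish in large degree'') is not spelled out and would still need the self-injectivity observation to close.
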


Since semisimple algebras over a perfect field are separable
(c.f. \cite[Lemma 3.3]{RR20}),
we have next corollary.
\begin{cor}
A finite dimensional algebra $\LL$ over a perfect field $\bbk$
is homologically smooth of dimension $d$
if and only if
$\gd \LL=d$.
\end{cor}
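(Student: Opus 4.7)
The plan is to deduce this corollary directly from Fact \ref{fct:HS} together with the fact cited just before the statement that semisimple algebras over a perfect field are separable. Both directions will follow at once once we observe that the separability hypothesis on $\LL/\rad\LL$ appearing in Fact \ref{fct:HS} is automatic in our setting.

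More precisely, I would proceed as follows. Fact \ref{fct:HS} gives the equivalence
\[
\text{$\LL$ is homologically smooth of dimension $d$} \iff \text{$\LL/\rad\LL$ is separable and $\gd\LL = d$.}
\]
The quotient $\LL/\rad\LL$ is a finite dimensional semisimple $\bbk$-algebra by the definition of the Jacobson radical. Since $\bbk$ is perfect, the cited result \cite[Lemma 3.3]{RR20} guarantees that every semisimple $\bbk$-algebra is separable; applying this to $\LL/\rad\LL$ removes the separability condition entirely. Hence the right-hand side of Fact \ref{fct:HS} reduces to the single condition $\gd\LL = d$, which proves both implications of the corollary simultaneously.

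There is essentially no obstacle here: the corollary is a direct specialization of Fact \ref{fct:HS} under the perfect-field hypothesis, and the only input beyond Fact \ref{fct:HS} is the standard statement about semisimple algebras over perfect fields. No separate argument for either direction is needed, and no computation is required.
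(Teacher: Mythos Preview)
Your proposal is correct and matches the paper's approach exactly: the paper simply observes, just before stating the corollary, that semisimple algebras over a perfect field are separable, so that the separability condition on $\LL/\rad\LL$ in Fact~\ref{fct:HS} is automatic. There is nothing more to the argument.
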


We restate Theorem \ref{thm:Main} in a convenient form.
\begin{cor}\label{cor:Main}
Let $\catT$ be an idempotent complete algebraic 
$m$-periodic triangulated category over a perfect field $\bbk$.
If $\catT$ has an $m$-periodic tilting object $T$
whose endomorphism algebra $\LL :=\End_{\catT}(T)$
is a finite dimensional $\bbk$-algebra of global dimension $d\leq m$,
then there exists a triangulated equivalence
\[
\catT\simeq D_m\left( \Lambda \right).
\]
\end{cor}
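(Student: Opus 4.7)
The plan is to reduce directly to Theorem \ref{thm:Main} by verifying its projective-dimension hypothesis $\pd_{\LL^e}\LL \le m$ from the given global-dimension hypothesis $\gd \LL = d \le m$. The bridge is supplied by Fact \ref{fct:HS}, combined with the standard fact that finite dimensional semisimple algebras over a perfect field are separable.

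First I would observe that $\LL/\rad \LL$ is a finite dimensional semisimple $\bbk$-algebra; since $\bbk$ is perfect, this quotient is separable (this is precisely the ingredient that powers the unnamed corollary stated right before the statement of \ref{cor:Main}). Together with the hypothesis $\gd \LL = d$, Fact \ref{fct:HS} then gives that $\LL$ is homologically smooth of dimension $d$, i.e.\
\[
\pd_{\LL^e} \LL \;=\; d \;\le\; m.
\]

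Second, the remaining hypotheses of Theorem \ref{thm:Main} are immediate: $\catT$ is an idempotent complete algebraic $m$-periodic triangulated category, $T$ is an $m$-periodic tilting object in $\catT$, and $\LL := \End_\catT(T)$ is finite dimensional over $\bbk$. So every hypothesis of Theorem \ref{thm:Main} is satisfied, and invoking it produces the desired triangulated equivalence $\catT \simeq D_m(\LL)$.

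There is no substantive obstacle at this step; the corollary is essentially a repackaging of Theorem \ref{thm:Main} once the separability input over a perfect field is used to identify $\pd_{\LL^e}\LL$ with $\gd \LL$. The genuine work has already been carried out in the proof of Theorem \ref{thm:Main}, namely the formality of $\LL[t,t^{-1}]$ established in Corollary \ref{cor:formal} via the Hochschild cohomology vanishing of Lemma \ref{lem:hoch}.
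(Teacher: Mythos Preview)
Your proposal is correct and matches the paper's intended argument exactly: the paper deduces Corollary~\ref{cor:Main} from Theorem~\ref{thm:Main} via Fact~\ref{fct:HS} together with the separability of $\LL/\rad\LL$ over a perfect field (the unnamed corollary immediately preceding~\ref{cor:Main}), which identifies $\pd_{\LL^e}\LL$ with $\gd\LL$. There is nothing to add.
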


\section{Examples}\label{s:ex}
In this section, 
we give concrete examples of periodic categories and periodic tilting objects
and an application of Corollary \ref{cor:Main}.
For a periodic triangulated category $\catT$,
the \emph{period} of $\catT$ is
the smallest positive integer $m$ with $\Sg^m \simeq \Id_{\catT}$ as additive functors.
\subsection{$m$-periodic derived categories}\label{ss:ex Dm}
Let $m\ge 1$ be a positive integer.
The $m$-periodic derived category $D_m(\LL)=D_m(\catmod \LL)$ of an algebra $\LL$
is a fundamental example of periodic triangulated category,
but its period is not necessarily $m$.
The period depends on the parity of $m$.
This phenomena is caused by the change of signs of differential by the shift functor.
\begin{prp}
Let $p$ be the period of $D_m(\LL)$.
\begin{enua}
\item
If $m$ or $\cha(\bbk)$ is even,
then $p=m$.
\item
If $m$ is odd,
then $p=m$ or $2m$.
\end{enua}
\end{prp}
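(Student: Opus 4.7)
The plan is to bracket the period $p$ from above by direct sign computations on the shift functor, and from below by a rigidity argument for stalk complexes.

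First I would establish the upper bounds. From the formula $d^i_{V[\ell]} = (-1)^\ell d^{i+\ell}_V$ combined with the $m$-periodicity $V^{i+m} = V^i$, one reads off that $V[2m] = V$ literally as objects of $C_m(\catmod \LL)$, and likewise $f[2m] = f$ on morphisms. Hence $\Sg^{2m} = \Id$ as a functor, and so $p \mid 2m$ in every case. Under the hypotheses of case (1)---either $m$ is even, so $(-1)^m = 1$, or $\cha(\bbk) = 2$, so $-1 = 1$---the same formula gives $V[m] = V$ on the nose, whence $\Sg^m = \Id$ and $p \mid m$.

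The key content is the lower bound $m \mid p$, which is needed in both cases. For this I would consider the stalk complexes $\LL_i \in C_m(\catmod \LL)$ with $\LL$ placed in degree $i \in \bbZ_m$ and trivial differential. A short direct computation shows that $\LL_0$ is $K$-projective: chain maps $\LL_0 \to V$ correspond to $\Hom_{\LL}(\LL, Z^0(V)) \simeq Z^0(V)$, and modding out by null-homotopies (which identify with the image of $\Hom_\LL(\LL, V^{-1}) \to \Hom_\LL(\LL, V^0)$) gives $H^0(V)$. Hence $K_m(\catmod \LL)(\LL_0, V) \simeq H^0(V)$, which vanishes for acyclic $V$. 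Applying this with $V = \LL_p$ for $0 < p < m$ yields
\[
D_m(\LL)(\LL_0, \LL_p) \simeq K_m(\catmod \LL)(\LL_0, \LL_p) \simeq H^0(\LL_p) = 0,
\]
so $\LL_0 \not\simeq \LL_p$ in $D_m(\LL)$. Since $\Sg^p \LL_0 = \LL_{-p}$, the existence of any natural isomorphism $\Sg^p \simeq \Id$ forces $-p \equiv 0 \pmod m$, i.e., $m \mid p$.

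Combining the two bounds completes the argument: in case (1), $m \mid p \mid m$ forces $p = m$; in case (2), $m \mid p \mid 2m$ forces $p \in \{m, 2m\}$. The main point requiring care is being scrupulous with sign conventions when asserting $V[2m] = V$ \emph{on the nose} (and not merely up to isomorphism), since ``$\Sg^p \simeq \Id$ as additive functors'' demands a genuine functorial isomorphism, not just an isomorphism on objects. Everything else reduces to a short Hom computation in $K_m(\catmod \LL)$ and a cyclic-group divisibility argument.
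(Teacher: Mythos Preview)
Your proof is correct and follows the same strategy as the paper: bound $p$ from above by the sign computations $\Sg^{2m}(V,d)=(V,(-1)^{2m}d)=(V,d)$ and, in case (1), $\Sg^{m}(V,d)=(V,d)$, and bound it from below by showing stalk complexes in distinct degrees of $\bbZ_m$ are non-isomorphic. The paper compresses the lower bound to the single phrase ``considering stalk complexes,'' whereas you spell it out via $K$-projectivity of $\LL_0$; a slightly cheaper justification is to invoke the cohomology functors $H^i$ on $D_m(\LL)$ directly, and you should also disambiguate your double use of the symbol $p$ (once for the period, once for the running index in $\bbZ_m$).
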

\begin{proof}
Considering stalk complexes,
we conclude that $p$ is a multiple of $m$.
For an $m$-periodic complex $(V,d)$, $\Sg^{2m}(V,d)=\left(V,(-1)^{2m}d\right)=(V,d)$.
Hence $\Sg^{2m}=\Id_{D_m(\LL)}$.
This implies $p=m$ or $2m$.
If $m$ or $\cha(\bbk)$ is even,
$\Sg^m(V,d)=\left(V,(-1)^md\right)=(V,d)$.
Thus, in this case, $p=m$.
\end{proof}

There is an algebra $\LL$ 
whose $1$-periodic derived category $D_1(\LL)$ is 
not a $1$-periodic triangulated category \cite[\S 5.3]{St18}.
However, for hereditary algebras, we do not worry about this phenomena.
\begin{prp}
Let $\LL$ be a hereditary algebra.
Then the period of $D_m(\LL)$ is $m$.
\end{prp}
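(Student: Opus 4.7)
The strategy is as follows. The previous proposition already gives $\Sg^{2m}=\Id_{D_m(\LL)}$ and shows the period $p$ lies in $\{m,2m\}$, with $p=m$ whenever $m$ or $\cha(\bbk)$ is even. So we only need to treat the remaining case $m$ odd and $\cha(\bbk)\ne 2$, in which $\Sg^m$ sends $(V,d_V)$ to $(V,-d_V)$; we must construct a natural isomorphism $\Sg^m\simeq\Id_{D_m(\LL)}$ of additive endofunctors.

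The key input is Proposition~\ref{prp:hered periodic}: since $\LL$ is hereditary, for each $V\in D_m(\LL)$ we may choose an isomorphism $\alpha_V\colon V\simto F_V$ in $D_m(\LL)$, where $F_V:=\bigoplus_{0\le i\le m-1}H^i(V)[-i]$ is a direct sum of stalk complexes with zero differential. On such ``formal'' objects $\Sg^m$ acts trivially in a very strong sense: shifting the $\bbZ_m$-indexing by $m\equiv 0$ is the identity and negating the zero differential does nothing, so $\Sg^m F_V=F_V$ on the nose; likewise, any degree-$0$ morphism $g$ between formal objects has components $g^{i+m}=g^i$ and $\Sg^m g=g$ literally. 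Using this, we define
\[
\beta_V\;:=\;\Sg^m(\alpha_V^{-1})\circ\alpha_V\;\colon\;V\longrightarrow \Sg^m V,
\]
an isomorphism as a composition of isomorphisms.

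It remains to check that $\beta_V$ is independent of the choice of $\alpha_V$ and natural in $V$, both of which reduce to the triviality of $\Sg^m$ on morphisms between formal objects. If $\alpha'_V=\gamma\circ\alpha_V$ for some automorphism $\gamma$ of $F_V$, then $\Sg^m\gamma=\gamma$, so $\Sg^m(\alpha'_V{}^{-1})\circ\alpha'_V=\Sg^m(\alpha_V^{-1})\circ\alpha_V$. Given a morphism $f\colon V\to W$, set $g:=\alpha_W\circ f\circ\alpha_V^{-1}\colon F_V\to F_W$; since $g$ is between formal objects, $\Sg^m g=g$, and applying $\Sg^m\alpha_W$ on the left to both sides of the proposed naturality square reduces it to this identity. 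Hence $\beta=(\beta_V)_V$ is a natural isomorphism $\Id\Rightarrow\Sg^m$, and the period of $D_m(\LL)$ is $m$.

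The main obstacle is the coherence check of the last paragraph: the decomposition $\alpha_V$ is genuinely non-canonical (it depends on splittings of the exact sequences $0\to Z^i(V)\to V^i\to B^{i+1}(V)\to 0$ used in Proposition~\ref{prp:hered periodic}), yet the transformation $\beta$ must be well defined and natural. What saves us is that $\Sg^m$ restricts to the identity functor, and not merely an isomorphic one, on the full subcategory of formal complexes, so any ambiguity in $\alpha_V$ is absorbed exactly.
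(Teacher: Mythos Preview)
Your approach mirrors the paper's: replace each object by its cohomology and use that $\Sg^m$ fixes complexes with zero differential. The paper's proof is terser---it simply declares that $\Sg^m$ restricted to the full subcategory $\catD\subset D_m(\LL)$ of such complexes equals $\Id_\catD$, then transports this through the equivalence $\catD\simeq D_m(\LL)$ from Proposition~\ref{prp:hered periodic}---while you unfold this into an explicit transformation $\beta$. Unfortunately both arguments share the same gap, and in your write-up it becomes visible.

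The claim ``$\Sg^m g=g$ for any morphism $g$ between formal objects'' is false for odd $m$. What you verify is that $\Sg^m$ fixes \emph{chain maps} between formal objects in $K_m(\catmod\LL)$, but morphisms in $D_m(\LL)$ are richer: by Corollary~\ref{cor:Hom periodic} they contain $\Ext^1$-components, which are only represented by roofs through non-formal apices, and $\Sg^m$ negates the differential of that apex. Concretely, for $\LL=\bbk A_2$ and any odd $m$, let $g\colon S_2\to S_1[1]$ correspond to the extension $0\to S_1\to P_2\to S_2\to 0$ and represent it by the roof $S_2\xleftarrow{q}Z\xrightarrow{r}S_1[1]$ with $Z=\pi(S_1\hookrightarrow P_2)$. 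Every chain isomorphism $\psi\colon Z\simto\Sg^m Z$ satisfying $q\psi=q$ is forced (since $\End(P_2)=\bbk$) to act as $+1$ in the degree carrying $P_2$ and hence, by the chain-map condition against $-d_Z$, as $-1$ in the degree carrying $S_1$; this gives $r\psi=-r$ and thus $\Sg^m g=-g$. This breaks both your well-definedness check (choose $V$ with $\End_{D_m(\LL)}(F_V)$ containing a nonzero $\Ext^1$-piece, so $\Sg^m\gamma\ne\gamma$) and your naturality check. The proposition is nevertheless true: one clean route is the equivalence $D_m(\LL)\simeq\perf(\LL[t,t^{-1}])$ of \S\ref{ss:periodic cpx as DG}, under which $\Sg^m$ becomes the grading shift by $m$ and multiplication by $t$ furnishes the required natural isomorphism to the identity.
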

\begin{proof}
Let $\catD$ be a full subcategory of $D_m(\LL)$ consisting of
$m$-periodic complexes with trivial differential.
$\Sg^m(V,0)=(V,0)$ for any $(V,0)\in\catD$ implies
$(\Sg_{|\catD})^m=\Id_{\catD}$.
Since the natural inclusion functor $\catD \inj D_m(\LL)$ 
is an equivalence by Proposition \ref{prp:hered periodic},
$\Sg^m \simeq \Id_{D_m(\LL)}$ as additive functors.
\end{proof}

$m$-periodic derived categories have $m$-periodic tilting objects
if it is an $m$-periodic triangulated category.
\begin{prp}
Let $\LL$ be an algebra of finite global dimension.
Suppose that the period of $D_m(\LL)$ is $m$.
Then $\LL_{\LL}$ itself is an $m$-periodic tilting object of $D_m(\LL)$
\end{prp}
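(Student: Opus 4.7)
The plan is to verify the two conditions of Definition \ref{dfn:m-tilt} directly for $T = \Lambda_\Lambda$, viewed as a stalk complex in degree $0$ inside $D_m(\Lambda) = D_m(\catmod \Lambda)$. The hypothesis that the period of $D_m(\Lambda)$ equals $m$ is only needed so that $D_m(\Lambda)$ is genuinely an $m$-periodic triangulated category in the sense of the paper and the definition of $m$-periodic tilting applies; the finite global dimension hypothesis is what lets us invoke the structural results of \S\ref{ss:periodic fin global}.

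For the thick generator condition, I would argue as follows. By Corollary \ref{cor:proj periodic}(1), one has $\tria_{K_m(\catmod \Lambda)}(\proj \Lambda) = K_m(\proj \Lambda)$, and by Corollary \ref{cor:proj periodic}(3) the natural functor $K_m(\proj \Lambda) \to D_m(\Lambda)$ is a triangulated equivalence. Consequently $\proj \Lambda$ (viewed via stalk complexes) generates $D_m(\Lambda)$ as a triangulated subcategory. Since every finitely generated projective $\Lambda$-module is a direct summand of some $\Lambda^n$, the thick subcategory $\thick_{D_m(\Lambda)}(\Lambda)$ contains $\proj \Lambda$ and hence equals all of $D_m(\Lambda)$. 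Alternatively, one could just cite Proposition \ref{lem:perf}(1) after intersecting with the image of $\catmod \Lambda$.

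For the orthogonality condition, I would apply Corollary \ref{cor:Hom periodic}(1) with $V = \Lambda$ and $W = \Lambda[i]$ viewed as bounded complexes. Since $\pi$ commutes with the shift, $\Sigma^i \Lambda \simeq \pi(\Lambda[i])$ in $D_m(\Lambda)$, so
\[
D_m(\Lambda)(\Lambda, \Sigma^i \Lambda)
= \bigoplus_{j \in \mathbb{Z}} D^b(\Lambda)(\Lambda, \Lambda[i + m j])
= \bigoplus_{j \in \mathbb{Z}} \Ext^{i + m j}_{\Lambda}(\Lambda, \Lambda).
\]
Because $\Lambda_\Lambda$ is projective, $\Ext^k_{\Lambda}(\Lambda, \Lambda) = 0$ for every $k \neq 0$. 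When $i \notin m\mathbb{Z}$ the integer $i + m j$ is nonzero for every $j \in \mathbb{Z}$, so every summand vanishes and the whole Hom-space is zero.

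There is no real obstacle here: the statement is a direct consequence of the machinery already developed in \S\ref{s:m-cpx}. The only point that wants a sentence of justification is the identification $\Sigma^i \Lambda \simeq \pi(\Lambda[i])$, which follows because $\pi$ is a triangulated functor and $\Sigma = [1]$ in every triangulated category at hand.
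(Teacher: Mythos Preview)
Your proof is correct and follows essentially the same approach as the paper: both verify thick generation via Corollary~\ref{cor:proj periodic}, and both reduce the vanishing of $D_m(\LL)(\LL,\Sg^i\LL)$ for $i\notin m\bbZ$ to an elementary statement about stalk complexes. The only cosmetic difference is that the paper invokes $K$-projectivity of $\LL$ directly to pass to $K_m(\LL)(\LL,\LL[i])$ and reads off the answer there, whereas you route through Corollary~\ref{cor:Hom periodic}(1) and the vanishing of $\Ext^{k}_\LL(\LL,\LL)$ for $k\neq 0$; these are equivalent one-line computations.
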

\begin{proof}
By Corollary \ref{cor:proj periodic} (1), $\LL_{\LL} \in D_m(\LL)$ is a thick generator.
Since $\LL$ is $K$-projective,
we have
\[
D_m(\LL)(\LL,\LL[i])\simeq K_m(\LL)(\LL,\LL[i])
=\begin{cases}
\LL & i\in m\bbZ \\
0 & i\not\in m\bbZ.
\end{cases}
\]
\end{proof}
This proposition indicates the possibility 
that we can drop the assumption $\pd_{\LL^e}\LL \leq m$ in Theorem \ref{thm:Main}.
On the other hand,
Example \ref{ex:cm} shows that we cannot drop the assumption $\pd_{\LL^e}\LL < \infty$.

\subsection{Periodic algebras}\label{ss:ex periodic alg}
In this subsection,
we give a quick review on periodic algebras following \cite[\S 11]{SY11}
and explain the relation between periodic algebras and periodic triangulated categories.
As an application of Corollary \ref{cor:Main},
we give a nontrivial triangulated equivalence between 
the stable category of the category of modules over a periodic algebra
and the periodic derived category of a hereditary algebra
in Example \ref{ex:self-inj Nalayama}.

Let $\LL$ be an algebra and $\LL^e:=\LL^{\op}\otimes_{\bbk}\LL$ its enveloping algebra.
For a finitely generated $\LL$-module $M$,
$P(M)\surj M$ denotes the projective cover of $M$
and $\Omega^1_{\LL}(M):=\Ker(P(M)\surj M)$ denotes the $1$st syzygy of $M$.
Inductively, we define the $n$th syzygy of $M$ by
$\Omega^n_{\LL}(M):= \Omega^1_{\LL}\left(\Omega^{n-1}_{\LL}(M)\right)$.
\begin{dfn}
An algebra $\LL$ is \emph{periodic}
if $\Omega^n_{\LL^e}(\LL)\simeq \LL$ in $\catmod \LL^e$.
The \emph{period} of $\LL$ is the smallest positive integer $m$ with
$\Omega^n_{\LL^e}(\LL)\simeq \LL$ in $\catmod \LL^e$.
\end{dfn}

\begin{ex}[{\cite[Lemma 4.2]{EH99}}]\label{ex:EH99}
Let $Q_n$ be the cyclic quiver in Figure \ref{fig:cyc}.
\begin{figure}[H]
\centering
\begin{tikzpicture}
	\coordinate[label=90:$1$] (1) at (90:2);
	\coordinate[label=45:$2$] (2) at (45:2);
	\coordinate[label=0:$3$] (3) at (0:2);
	\coordinate[label=270:$i$] (i) at (270:2);
	\coordinate[label=180:$n-1$] (n-1) at (180:2);
	\coordinate[label=135:$n$] (n) at (135:2);
	 \foreach \P in {1,2,3,i,n-1,n} \draw (\P) circle (0.06);
	 \draw[->,shorten <=7pt,shorten >=7pt] (1) arc (90:135:2);
	 \draw[->,shorten <=7pt,shorten >=7pt] (2) arc (45:90:2);
	 \draw[->,shorten <=7pt,shorten >=7pt] (3) arc (0:45:2);
	 \draw[<-,shorten <=7pt,shorten >=7pt] (3) arc (360:320:2);
	 \draw[->,shorten <=7pt,shorten >=7pt] (i) arc (270:310:2);
	 \draw[<-,shorten <=7pt,shorten >=7pt] (i) arc (270:230:2);
	 \draw[->,shorten <=7pt,shorten >=7pt] (n-1) arc (180:220:2);
	 \draw[->,shorten <=7pt,shorten >=7pt] (n) arc (135:180:2);
	 \draw[thick,dotted] (310:2) arc (310:320:2);
	 \draw[thick,dotted] (220:2) arc (220:230:2);
\end{tikzpicture}
\caption{The cyclic quiver $Q$.}
\label{fig:cyc}
\end{figure}
\noindent
We identify the numbers assigned to vertices of $Q$ with the elements of $\bbZ_n$ 
as in Figure \ref{fig:cyc}.
The algebra $\LL_{n,m}:=\bbk Q_n/\rad^m(\bbk Q_n)$ is a periodic algebra.
The period of $\LL_{n,m}$ is $2\cdot \lcm(n,m)/m$
except if $\cha(\bbk)=2$, $m=2$ and $n$ is odd.
In the excluded case, the period of $\LL_{n,m}$ is $n$.
\end{ex}

The stable category of $\smod \LL$ is the quotient category of $\catmod \LL$ by
the ideal generated by the morphisms factoring through projective modules.
The $n$th syzygy gives rise to a functor $\Omega^n_{\LL}(-) : \smod\LL \to \smod\LL$
\begin{fct}[{\cite[Proposition 11.17]{SY11}}]\label{fct:bimod syzygy}
Let $\LL$ be an algebra and $M\in\catmod\LL$.
There exists a natural isomorphism
\[
\Omega^i_{\LL}(M)\simeq M\otimes_{\LL}\Omega^i_{\LL^e}(\LL)
\]
in $\smod \LL$ for any $i\ge 0$.
\end{fct}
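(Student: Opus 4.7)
The plan is to deduce the isomorphism from the fact that a projective resolution of $\LL$ as an $\LL^e$-module, once tensored with $M$ over $\LL$, becomes a projective resolution of $M$ as a right $\LL$-module.

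First I would observe that any projective $\LL^e$-module is automatically projective (hence flat) when viewed as a left $\LL$-module via the left factor of $\LL^e = \LL^{\op}\otimes\LL$. Indeed, $\LL^e$ itself is isomorphic as a left $\LL$-module to $\LL \otimes_\bbk \LL \simeq \LL^{(\dim_\bbk \LL)}$, so direct summands of free $\LL^e$-modules are summands of free left $\LL$-modules. Next, I would choose a projective resolution
\[
\cdots \to P_2 \to P_1 \to P_0 \to \LL \to 0
\]
of $\LL$ in $\catmod \LL^e$ and apply the functor $M\otimes_{\LL}(-)$. The crucial point is that since $\LL$ is free as a left $\LL$-module, we have $\mathrm{Tor}^{\LL}_i(M,\LL)=0$ for all $i\geq 1$, which by the flatness of each $P_j$ as a left $\LL$-module implies that the resulting complex $M\otimes_{\LL}P_\bullet$ is acyclic in positive degrees with $H_0 = M\otimes_\LL \LL \simeq M$. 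Thus $M\otimes_{\LL}P_\bullet \to M$ is a projective resolution of $M$ in $\catmod\LL$, and the right exactness of $M\otimes_\LL -$ identifies its $i$th syzygy as $M\otimes_{\LL}\Omega^i_{\LL^e}(\LL)$.

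At this stage I would invoke the generalized Schanuel lemma: any two projective resolutions of $M$ yield syzygies that are isomorphic in $\smod\LL$. Comparing our resolution with a minimal projective resolution of $M$ then gives the required isomorphism $M\otimes_{\LL}\Omega^i_{\LL^e}(\LL) \simeq \Omega^i_\LL(M)$ in $\smod\LL$. Finally, for naturality in $M$, I would check that a morphism $f:M\to N$ in $\catmod\LL$ induces, via $f\otimes_{\LL}P_\bullet$, a chain map between the resolutions that agrees, modulo projectives, with the comparison to minimal resolutions; this produces a natural transformation between the two functors $(-)\otimes_{\LL}\Omega^i_{\LL^e}(\LL)$ and $\Omega^i_\LL(-)$ on $\smod\LL$.

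The main obstacle I anticipate is the bookkeeping in the last step: verifying that the isomorphism is genuinely natural, not merely object-wise, requires ensuring that the chain homotopy classes of comparison maps between the two projective resolutions behave coherently under morphisms $f : M \to N$. All other ingredients are formal consequences of the left-flatness of projective bimodules and the universal properties of syzygies in the stable category.
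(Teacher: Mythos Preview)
The paper does not prove this statement: it is recorded as a \emph{Fact} with a citation to \cite[Proposition 11.17]{SY11} and used without further argument. So there is no proof in the paper to compare against.

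Your argument is the standard one and is essentially correct. One small point deserves tightening: when you say ``the right exactness of $M\otimes_\LL -$ identifies its $i$th syzygy as $M\otimes_{\LL}\Omega^i_{\LL^e}(\LL)$'', right exactness alone is not enough---you also need that the short exact sequences $0 \to \Omega^{j+1}_{\LL^e}(\LL) \to P_j \to \Omega^j_{\LL^e}(\LL) \to 0$ stay exact after applying $M\otimes_\LL -$. This does follow from what you have already shown (exactness of the whole tensored complex), or alternatively from the observation that each $\Omega^j_{\LL^e}(\LL)$ is projective as a left $\LL$-module: the sequence $0\to\Omega^1\to P_0\to\LL\to 0$ splits over $\LL$ on the left since $\LL$ is free, and then induction gives left-projectivity of every syzygy. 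Either way the gap is easily closed. Your remark about the bookkeeping for naturality is fair; the standard resolution-comparison argument handles it, but it is indeed the place where one must be careful.
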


\begin{prp}
Let $\LL$ be a periodic algebra.
\begin{enua}
\item
$\LL$ is a self-injective algebra.
In particular, $\smod\LL$ is a triangulated category whose suspension functor is
a quasi-inverse of syzygy functor $\Omega^1_{\LL}(-):\smod\LL \to \smod\LL$
\item
$\smod \LL$ is a periodic triangulated category.
\item
The period of $\smod\LL$ divides the period of $\LL$.
\end{enua}
\end{prp}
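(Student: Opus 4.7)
The plan is to derive all three claims from the bimodule periodicity $\Omega^n_{\LL^e}(\LL)\simeq \LL$, where $n$ denotes the period of $\LL$, combined with Fact~\ref{fct:bimod syzygy}. For any $M\in\catmod\LL$ that fact gives a natural stable isomorphism
\[
\Omega^n_\LL(M)\,\simeq\, M\otimes_\LL \Omega^n_{\LL^e}(\LL)\,\simeq\, M\otimes_\LL \LL\,\simeq\, M
\]
in $\smod\LL$, and this single identity will drive all three parts.

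For~(1), I would first unpack the stable isomorphism: by Krull--Schmidt in $\catmod\LL$, $M\simeq \Omega^n_\LL(M)$ in $\smod\LL$ is equivalent to the existence of projectives $P,Q\in\catmod\LL$ with $M\oplus P\simeq \Omega^n_\LL(M)\oplus Q$. Since $\Omega^n_\LL(M)$ is, by construction, a submodule of a projective module (it appears as the kernel of a map between projectives in any projective resolution of $M$), the right-hand side embeds into a projective; hence so does $M\oplus P$, and therefore so does $M$. Thus every finitely generated right $\LL$-module is a submodule of a finitely generated projective one, which is a Faith-type characterization of self-injectivity for Artin algebras. Self-injectivity makes $\catmod\LL$ a Frobenius exact category, and Happel's theorem then gives that $\smod\LL$ is triangulated with suspension $\Sg$ a quasi-inverse of $\Omega^1_\LL$.

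For~(2), the stable isomorphism above is natural in $M$ by the functoriality of $-\otimes_\LL\Omega^n_{\LL^e}(\LL)$ together with Fact~\ref{fct:bimod syzygy}, so it assembles into a natural isomorphism of additive endofunctors $\Omega^n_\LL\simeq \Id_{\smod\LL}$. Passing to the quasi-inverse $\Sg$ of $\Omega^1_\LL$ yields $\Sg^n\simeq \Id_{\smod\LL}$, so $\smod\LL$ is $n$-periodic. Statement~(3) is then immediate from the definition of period: since $\Sg^n\simeq\Id$, the smallest positive $p$ with $\Sg^p\simeq\Id$ divides $n$. The only genuinely subtle point, and thus the main obstacle, is part~(1): one must take care to invoke a specific projective resolution (rather than only the stable isomorphism) to realize $\Omega^n_\LL(M)$ as an actual submodule of a projective, and to cite the correct form of the Faith/Faith--Walker characterization for Artin algebras.
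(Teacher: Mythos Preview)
Your argument is correct. For parts (2) and (3) it coincides with the paper's proof: both apply Fact~\ref{fct:bimod syzygy} to the bimodule isomorphism $\Omega^n_{\LL^e}(\LL)\simeq\LL$ to obtain the natural stable isomorphism $\Omega^n_\LL(M)\simeq M$, and then read off the periodicity of the suspension.

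The one genuine difference is in part~(1). The paper does not argue at all here; it simply defers to \cite[Proposition~11.18]{SY11}. You instead extract self-injectivity directly from the same stable isomorphism: via Krull--Schmidt you get $M\oplus P\simeq \Omega^n_\LL(M)\oplus Q$ in $\catmod\LL$, observe that the right-hand side embeds in a projective since $\Omega^n_\LL(M)$ does by construction, and conclude that every finitely generated module embeds in a projective, which forces self-injectivity for an Artin algebra (every indecomposable injective then splits off a projective, and a count of simples finishes it). This is a clean, self-contained route that avoids the external citation; its cost is only the mild care you flag yourself, namely making explicit that $\Omega^n_\LL(M)$ is taken as an honest kernel inside a projective, and spelling out the ``embeds in a projective $\Rightarrow$ self-injective'' step for Artin algebras rather than invoking Faith by name.
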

\begin{proof}
See \cite[Proposition 11.18]{SY11} for (1).
Let $d$ be the period of $\LL$.
Then $\Omega^d_{\LL^e}(\LL)\simeq \LL$ as $\LL^e$-modules.
By Fact \ref{fct:bimod syzygy}, we have an isomorphism
\[
\Omega^d_{\LL}(M)
\simeq M\otimes_{\LL}\Omega^d_{\LL^e}(\LL)
\simeq M\otimes_{\LL}\LL 
\simeq M
\]
in $\smod \LL$.
It is obviously natural isomorphism.
Hence $\Omega^d_{\LL}\simeq \Id_{\smod\LL}$.
Thus we conclude (2) and (3).
\end{proof}

We investigate the stable category $\smod\LL_{n,m}$ 
in Example \ref{ex:EH99} for the case $n=m$, explicitly.
We construct an equivalence between $\smod\LL_{n,n}$
and the periodic derived category of a hereditary algebra by Corollary \ref{cor:Main}.
\begin{ex}\label{ex:self-inj Nalayama}
Let $\bbk$ be a perfect field.
Consider the algebra $\LL :=\LL_{n,n}$ in Example \ref{ex:EH99}.
$\LL$ is a periodic algebra whose period is $2$.
Thus the period of $\smod\LL$ is $1$ or $2$.
The Auslander-Reiten (AR) quiver of $\LL$ is Figure \ref{fig:selfinj},
where $M(a, l)$ is the indecomposable $\LL$-module of length $l$
whose top is the simple $\LL$-module associated to the vertex $a$.
\begin{figure}[H]
\centering
\begin{tikzpicture}
\node (1) at (0,0) {$M(1,1)$};
\node (2) at (3,0) {$M(2,1)$};
\node (3) at (6,0) {$M(3,1)$};
\node (21) at (1.5,1) {$M(2,2)$};
\node (32) at (4.5,1) {$M(3,2)$};
\node (13) at (7.5,1) {$M(1,2)$};
\node (321) at (3,2) {$M(3,3)$};
\node (132) at (6,2) {$M(1,3)$};
\node (213) at (9,2) {$M(2,3)$};
\node (1') at (9,0) {$M(1,1)$};
\node (21') at (10.5,1) {$M(2,2)$};
\node (A) at (-3,0) {};
\node (B) at (-1.5,1) {};
\node (C) at (0,2) {};
\node (X) at (1.5,2) {};
\node (Y) at (-1.5,0) {};
\node (Z) at (10.5,2) {};
\node (W) at (7.5,0) {};
 \draw[dashed] (1)--(2);
 \draw[dashed] (2)--(3);
 \draw[dashed] (21)--(32);
 \draw[dashed] (32)--(13);
 \draw[->] (1)--(21);
 \draw[->] (21)--(2);
 \draw[->] (2)--(32);
 \draw[->] (32)--(3);
 \draw[->] (3)--(13);
 \draw[->] (21)--(321);
 \draw[->] (321)--(32);
 \draw[->] (32)--(132);
 \draw[->] (132)--(13);
 \draw[->] (13)--(213);
 \draw[dashed] (13)--(21');
 \draw[dashed] (3)--(1');
 \draw[->] (213)--(21');
 \draw[->] (13)--(1');
 \draw[->] (1')--(21');
 \draw[dashed,shorten <=45pt] (A)--(1);
 \draw[dashed,shorten <=45pt] (B)--(21);
 \draw[->,shorten <=23pt] (B)--(1);
 \draw[->,shorten <=23pt] (C)--(21);
 \draw[dotted,shorten <=-10pt,shorten >=-10pt] (X)--(Y);
 \draw[dotted,shorten <=-10pt,shorten >=-10pt] (Z)--(W);
\end{tikzpicture}
\caption{The AR quiver of $\LL$ for $n=3$.}
\label{fig:selfinj}
\end{figure}
The indecomposable projective modules are $M(a,n)$ for vertices $a$ of $Q_n$.
Since the sequence $0\to M(a+l, n-l) \to M(a+l , n) \to M(a, l) \to 0$ is exact,
we have $\Sg M(a, l)= M(a+l, n-l)$.
Thus the period of $\smod \LL$ is $2$.
Then for a vertex $a$ of $Q_n$, the object 
\[
 T(a) := \bigoplus_{l=1}^{n-1} M(a, l) 
\]
is a $2$-periodic tilting object of $\smod\LL$.
Indeed, $T(a)$ is a rigid thick generator of $\smod\LL$ by Figure \ref{fig:selfinj}.
Figure \ref{fig:selfinj} also says that the endomorphism algebra of $T(a)$
is isomorphic to a hereditary algebra $\bbk A_{n-1}$,
where $A_{n-1}$ is the quiver $1 \leftarrow 2 \leftarrow \dots \leftarrow n-1$.
By Corollary \ref{cor:Main}, 
there exists a triangulated equivalence $\smod \LL \simto D_2(\bbk A_{n-1})$.
\end{ex}

\subsection{Hypersurface singularities}\label{ss:ex Hyper sing}
In this subsection, we investigate periodic tilting objects 
in the stable category of the category maximal Cohen-Macaulay (CM) modules
over a hypersurface singularity.
These categories are $2$-periodic triangulated categories and,
in some case, have a $2$-periodic tilting object.
However, its endomorphism algebra has infinite global dimension
and hence does not satisfies the assumption of Corollary \ref{cor:Main}.
In Example \ref{ex:cm},
we see that there is a $2$-periodic triangulated category 
with a $2$-periodic tilting object,
which is not triangulated equivalence to
the $2$-periodic derived category of its endomorphism algebra.

We first recall that the basics of hypersurface singularities.
See \cite{Yo90} for details.
Let $S$ be a $(d+1)$-dimensional complete regular local commutative ring 
with maximal ideal $\pnn$ and residue field $\bbk$.
For $0\ne f\in\pnn$,
$R:= S/(f)$ is called the \emph{hypersurface singularity} defined by $f$.
Note that $R$ is a $d$-dimensional complete Gorenstein commutative ring.
In particular, the category $\CM (R)$ of maximal CM modules over $R$
is a Frobenius exact category,
and thus its stable category $\sCM(R)$ is a triangulated category
whose shift functor is given by 
a quasi-inverse of syzygy functor $\Omega^1:\sCM(R) \to \sCM(R)$. 
By the theory of matrix factorizations,
we can see that $\sCM(R)$ is a $2$-periodic triangulated category.

A commutative local ring $R$ is called an isolated singularity
if the localization $R_{\ppp}$ is regular
for any non maximal prime ideal $\ppp$ of $R$.
In the following, we denote by $\sHom_R(M,N):=\Hom_{\sCM(R)}(M,N)$.
\begin{fct}[{\cite[\S 3]{Yo90}}]
Let $R$ be a complete Gorenstein isolated singularity
with maximal ideal $\pmm$ and residue field $\bbk$.
\begin{enua}
\item
$\CM(R)$ has Auslander-Reiten (AR) sequences.
\item
The AR translation is given by $\tau=\Omega^{2-d}$.
\item
$\sCM(R)$ is $(d-1)$-Calabi-Yau, i.e.
For any $M,N\in\CM(R)$, there are the natural isomorphism
\[
\sHom_R(M,N)\simeq \bbD\left( \sHom_R(N,\Omega^{1-d}M) \right),
\]
where $E$ is the injective envelope of $\bbk$ and 
$\bbD := \Hom_R(-,E):(\catmod R)^{\op} \to \catmod R$ is the Matlis dual.
\end{enua}
\end{fct}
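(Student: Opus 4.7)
The plan is to tackle the three claims in the natural logical order: establish Serre duality on $\sCM(R)$ first (which is the heart of (3)), read off the AR translation from the Serre functor (part (2)), and deduce existence of AR sequences (part (1)).

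To prove (3), I would start with Grothendieck's local duality: for the complete $d$-dimensional Gorenstein local ring $R$ the canonical module coincides with $R$, yielding a natural isomorphism $H^i_\pmm(M) \simeq \bbD \Ext^{d-i}_R(M, R)$ for $M \in \catmod R$. Next I would establish the dimension-shifting identity $\sHom_R(N, \Omega^{1-d} M) \simeq \Ext^{d-1}_R(N, M)$ for $M, N \in \CM(R)$, obtained by iterating the short exact sequence $0 \to M \to I \to \Omega^{-1} M \to 0$ with $I$ free and using the vanishing $\Ext^{>0}_R(N, R) = 0$ valid for maximal CM modules over a Gorenstein ring. With these two tools, the $(d-1)$-CY formula reduces to producing a functorial isomorphism $\sHom_R(M, N) \simeq \bbD \Ext^{d-1}_R(N, M)$. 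This is Auslander's classical stable duality for maximal CM modules; one proves it by applying local duality to the Hom complex coming from a projective resolution of $N$, controlling the Matlis-dual side via the Auslander transpose, and using the isolated singularity hypothesis to force $\sHom_R(M, N)$ and related modules to have finite length so that everything fits inside the local cohomology framework.

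For (2), once (3) supplies the Serre functor $S = \Sg^{d-1}$ on the Hom-finite Krull-Schmidt triangulated category $\sCM(R)$, the AR translation is $\tau = S \circ \Sg^{-1} = \Sg^{d-2}$; since $\Sg = \Omega^{-1}$ on $\sCM(R)$, this becomes $\Omega^{2-d}$. Alternatively, one can identify the Nakayama functor $\nu$ on $\sCM(R)$ with $\Omega^{-d}$ via the Gorenstein equality $\omega_R \simeq R$ and the length-$d$ injective resolution of $R$, and then invoke the classical formula $\tau = \nu \circ \Omega^2$. For (1), with (3) in hand $\sCM(R)$ is a Hom-finite Krull-Schmidt triangulated category equipped with a Serre functor; by the theorem of Reiten--Van den Bergh, such a category automatically admits AR triangles. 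Lifting each almost split triangle to a conflation in the Frobenius category $\CM(R)$ then produces the desired AR sequences.

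The main obstacle is part (3): one must carefully thread local duality, Matlis duality, and the interaction between syzygies and the Nakayama functor while preserving bifunctoriality, so that the resulting pointwise dualities actually assemble into a Serre functor rather than a mere family of vector-space isomorphisms. Everything else is a formal consequence of having the Serre functor on a Hom-finite Krull-Schmidt triangulated category.
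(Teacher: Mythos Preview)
The paper does not prove this statement at all: it is recorded as a \texttt{fct} with a bare citation to \cite[\S 3]{Yo90}, so there is no ``paper's own proof'' to compare against. Your outline is a reasonable and essentially correct sketch of the standard argument, so in that sense there is nothing to fault.

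It is worth noting, however, that the logical order you propose is the reverse of the classical treatment in Yoshino's book. There (and in Auslander's original work) one first proves (1) directly: the isolated-singularity hypothesis forces $\sHom_R(M,N)$ to have finite length for $M,N\in\CM(R)$, and then Auslander's functorial machinery produces almost split sequences without any appeal to a Serre functor. The identification (2) of $\tau$ with $\Omega^{2-d}$ is then obtained by an explicit computation with syzygies and the canonical module, and the Calabi--Yau/Serre duality statement (3) is read off afterwards. Your route---establish the Serre functor first via local duality and Auslander--Reiten duality, then invoke Reiten--Van den Bergh to get AR triangles, then lift---is the more modern packaging and is equally valid; it has the advantage that (2) becomes a one-line consequence of the formula $\tau=S\circ\Sg^{-1}$, at the cost of front-loading the analytic work (local duality, finite-length control, bifunctoriality) into (3), exactly as you flag in your final paragraph.
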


Let $R = S/(f)$ be a hypersurface singularity.
Recall that an $R$-module $M$ is rigid if $\Ext^1_R(M, M) = 0$.
$2$-periodic tilting objects in $\sCM(R)$ are exactly non-free rigid CM $R$-modules,
which are thick generators of $\sCM(R)$.
The following proposition is suggested by Ryo Takahashi.
\begin{prp}\label{prp:Tak}
Let $R=S/(f)$ be a hypersurface isolated singularity.
\begin{enua}
\item
If $\dim R=2$,
$\sCM(R)$ has no $2$-periodic tilting objects.
\item
Any non-free rigid CM $R$-modules are $2$-periodic tilting objects.
\end{enua}
\end{prp}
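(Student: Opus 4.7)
The plan is to treat the two parts separately, each using a structural fact about $\sCM(R)$ recalled just above the statement.

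For part (1), I will exploit that $\sCM(R)$ is $1$-Calabi--Yau when $\dim R = 2$. Let $T$ be a hypothetical $2$-periodic tilting object. Taking $i = 1 \notin 2\bbZ$ in the tilting condition gives $\sHom_R(T, \Sg T) = 0$. Substituting $M = N = T$ in the Calabi--Yau duality yields
\[
\sHom_R(T, T) \simeq \bbD\!\left(\sHom_R(T, \Sg T)\right) = 0,
\]
so $\id_T$ factors through a free module in $\CM(R)$; as $R$ is complete local and $T$ is finitely generated, this forces $T$ to be free, i.e.\ $T \simeq 0$ in $\sCM(R)$. A zero object cannot be a thick generator of the nonzero category $\sCM(R)$ (assuming $R$ is singular), contradicting the assumption.

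For part (2), let $T$ be a non-free rigid CM $R$-module. The Hom-vanishing condition follows directly from the standard isomorphism $\sHom_R(T, \Sg T) \simeq \Ext^1_R(T, T)$ for MCM modules over a Gorenstein local ring (which reflects the equality of stable Hom in positive degrees with Ext via complete resolutions): rigidity gives $\sHom_R(T, \Sg T) = 0$, and the $2$-periodicity $\Sg^2 \simeq \Id_{\sCM(R)}$ propagates this to $\sHom_R(T, \Sg^i T) = 0$ for every odd $i$.

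The remaining issue---and the main obstacle---is showing that $T$ is a thick generator of $\sCM(R)$. The plan is to invoke the classification of thick subcategories of the singularity category of a complete local hypersurface isolated singularity (due to Stevenson, building on work of Takahashi): they are in bijection with specialization-closed subsets of the singular locus of $R$. For an isolated singularity this locus is just the closed point $\{\pmm\}$, so the only thick subcategories of $\sCM(R)$ are $\{0\}$ and $\sCM(R)$ itself. Since $T$ is non-free, $T$ is not zero in $\sCM(R)$, and hence $\thick_{\sCM(R)}(T) = \sCM(R)$, completing the proof.
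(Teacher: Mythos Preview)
Your proposal is correct and follows essentially the same approach as the paper: part (1) uses the $1$-Calabi--Yau duality to force $\sHom_R(T,T)=0$, and part (2) reduces to the fact that $\sCM(R)$ has no nontrivial thick subcategories for a hypersurface isolated singularity. The only notable difference is the attribution for the thick-subcategory classification: the paper cites Takahashi \cite[Theorem~6.8]{Tak10} directly, whereas you invoke Stevenson's more general tensor-triangular classification; the relevant consequence (only $\{0\}$ and $\sCM(R)$ occur) is the same.
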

\begin{proof}
(1)
Since $\sCM(R)$ is $1$-Calabi-Yau,
$\Ext^1_R(M,M)=0$ implies $\sHom_R(M,M)=0$.
Thus rigid CM $R$-modules isomorphic to a zero module in $\sCM(R)$.

(2)
It is enough to show that non-free CM $R$-modules are thick generators.
In this case, there are no nontrivial thick subcategory
by \cite[Theorem 6.8.]{Tak10}.
Thus non-free CM $R$-modules are thick generators.
\end{proof}

By this proposition,
we can determine
whether the stable category of maximal CM modules over a simple singularity
has a $2$-periodic tilting object.
Recall that \emph{simple singularities} are 
hypersurface singularities $\bbC[[x,y,z_2,\dots,z_d]]/(f)$ 
defined by the following defining equations $f\in \bbC[[x,y,z_2,\dots,z_d]]$:
\begin{description}
\item[($A_n$)]
$x^2+y^{n+1}+z_2^2 +\cdots + z_d^2$ for $n\ge 1$,
\item[($D_n$)]
$x^2y+y^{n-1}+z_2^2 +\cdots + z_d^2$ for $n\ge 4$,
\item[($E_6$)]
$x^3+y^{4}+z_2^2 +\cdots + z_d^2$,
\item[($E_7$)]
$x^3+xy^{3}+z_2^2 +\cdots + z_d^2$,
\item[($E_6$)]
$x^3+y^{5}+z_2^2 +\cdots + z_d^2$.
\end{description}

\begin{prp}
Let $R_d=\bbC[[x,y,z_2,\dots,z_d]]/(f)$ be a simple singularity.
\begin{enua}
\item
Assume $d$ is even.
Then $\sCM(R_d)$ has no $2$-periodic tilting objects.
\item
Assume $d$ is odd.
Then $\sCM(R_d)$ has a $2$-periodic tilting object
if $R_d$ is of type ($A_n$) for odd $n$, ($D_n$) or ($E_7$).
In any case, the endomorphism algebra of a $2$-periodic algebra
has infinite global dimension.
\end{enua}
\end{prp}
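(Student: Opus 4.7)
The plan is to combine the Calabi--Yau structure on $\sCM(R_d)$ with Kn\"orrer periodicity in order to reduce part (2) to the plane curve case $d=1$, and then to exhibit a non-free rigid indecomposable MCM module for each listed ADE type.

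For part (1), suppose $d$ is even. Since $\sCM(R_d)$ is $(d-1)$-Calabi--Yau and $2$-periodic, for any $M \in \sCM(R_d)$ we have
\[
\sHom_R(M, \Sg M) \simeq \bbD\,\sHom_R(\Sg M, \Sg^{d-1} M) = \bbD\,\sHom_R(M, \Sg^{d-2} M) = \bbD\,\sHom_R(M,M),
\]
using $\Sg^{d-2}\simeq \Id$ in a $2$-periodic category since $d-2$ is even. Hence if $M$ is rigid then $M=0$ in $\sCM(R_d)$, generalizing Proposition \ref{prp:Tak}(1). In particular no non-free rigid MCM module exists, and therefore neither does a $2$-periodic tilting object.

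For part (2), suppose $d$ is odd. By Kn\"orrer periodicity $\sCM(R_d)\simeq \sCM(R_1)$, since over $\bbC$ the summand $z_{d-1}^2+z_d^2$ is transformed into $uv$ by a linear change of variables, so it suffices to treat the plane curve case. For each listed type we exhibit a non-free rigid MCM $R_1$-module; Proposition \ref{prp:Tak}(2) then promotes it to a $2$-periodic tilting object. For $(A_{2k-1})$, a candidate is the ideal $(x,y^k)\subset R_1=\bbC[[x,y]]/(x^2+y^{2k})$, whose rigidity follows from a direct computation with the explicit $2$-periodic minimal free resolution associated to its matrix factorization. For $(D_n)$ and $(E_7)$, one selects analogous indecomposable matrix factorizations from the known classification (see Yoshino) and verifies rigidity by the same periodic-resolution computation.

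For the claim that $\LL:=\End_{\sCM}(T)$ has infinite global dimension, one computes the quiver-with-relations of $\LL$ in each explicit case and identifies a loop (or a self-extension of a simple $\LL$-module) that manifestly forces $\gd\LL=\infty$. Equivalently, were $\gd\LL\le 2$, Corollary \ref{cor:Main} would give a triangulated equivalence $\sCM(R_1)\simeq D_2(\LL)$; the $0$-Calabi--Yau structure on $\sCM(R_1)$, together with the explicit form of the Serre functor on $D_2(\LL)$ for $\LL$ of finite global dimension, then yields a contradiction. The main obstacle is the case-by-case analysis in part (2): constructing the matrix factorizations, verifying rigidity via the periodic resolution, and extracting enough quiver-theoretic information about the endomorphism algebra to rule out finite global dimension; the $(A_n)$ case is the most transparent, while $(D_n)$ and $(E_7)$ require more delicate matrix-factorization bookkeeping.
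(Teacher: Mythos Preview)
Your plan is essentially correct but takes a more laborious route than the paper in part~(2), and your alternative argument for the global-dimension claim does not close.

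For part~(1), the paper uses Kn\"orrer periodicity to reduce to $d=2$ and then applies Proposition~\ref{prp:Tak}(1). Your direct Calabi--Yau computation at level $d$ is a valid shortcut: it is precisely the same $1$-Calabi--Yau obstruction, just carried out in $\sCM(R_d)$ without first descending. Nothing is lost or gained.

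For part~(2), the paper and you both reduce to $d=1$ via Kn\"orrer periodicity. After that, however, the paper does no case-by-case work at all: it simply invokes \cite[Theorem~1.3]{BIKR08}, which already classifies all non-free rigid MCM modules over one-dimensional simple singularities and computes their stable endomorphism algebras. That single citation delivers both the existence of a rigid object exactly in types $(A_{2k-1})$, $(D_n)$, $(E_7)$ and the fact that the resulting endomorphism algebras have infinite global dimension. Your proposed matrix-factorization computations and quiver analysis would reproduce pieces of that theorem; they are feasible (and your $(A_{2k-1})$ candidate $(x,y^k)$ is indeed the right one, cf.\ Example~\ref{ex:cm}), but unnecessary given the reference.

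One genuine gap: your ``equivalently'' argument for $\gd\LL=\infty$ via Corollary~\ref{cor:Main} only excludes $\gd\LL\le 2$, since that corollary requires $\gd\LL\le m=2$ as a hypothesis. It says nothing about $\gd\LL=3,4,\dots$, so even if the Serre-functor contradiction went through it would not establish infinite global dimension. You should either drop that alternative or fall back on the explicit endomorphism-algebra descriptions, which is what \cite{BIKR08} provides.
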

\begin{proof}
By the Kn\"{o}rrer periodicity \cite[Theorem 3.1]{Kn87},
$\sCM(R_d)$ is triangulated equivalent to $\sCM(R_2)$ or $\sCM(R_1)$
if $d$ is even or odd, respectively,
where $R_1=\bbC[[x,y]]/(x^2+y^{n+1})$, for example.
By Proposition \ref{prp:Tak} (1), $\sCM(R_2)$ has no $2$-periodic object,
and hence $\sCM(R_d)$ has no $2$-periodic object for even $d$.
Assume $d$ is odd.
Then $2$-periodic tilting objects in $\sCM(R_1)$ is equivalent to
non-free rigid CM $R_1$-modules by Proposition \ref{prp:Tak} (1).
Non-free rigid CM $R_1$-modules are determined in \cite[Theorem 1.3]{BIKR08},
and hence (2) follows from this.
\end{proof}

We write down explicitly the stable category of maximal CM modules 
over a simple singularity $R_{2\ell-1}$ of type $(A_{2\ell-1})$ for $\ell\ge1$
and see that $\sCM(R_{2\ell-1})$ is not equivalent to the $2$-periodic derived category
of the endomorphism algebra of its $2$-periodic object.
\begin{ex}\label{ex:cm}
Consider the one dimensional simple singularity
$R_{2\ell-1} := \bbC[[x,y]] /\left( x^2 - y^{2\ell} \right)$ of type $A_{2\ell-1}$.
The AR quiver of $\sCM(R_{2\ell-1})$ is given as in Figure \ref{fig:A_2l-1}.
\begin{figure}[H]
\centering
\begin{tikzpicture}
\node (1) at (0,0) {$M_1$};
\node (2) at (1.5,0) {$M_2$};
\node (i) at (3,0) {$\cdots$};
\node (n-1) at (4.5,0) {$M_{\ell-1}$};
\node (+) at (5.5,1) {$N_{+}$};
\node (-) at (5.5,-1) {$N_{-}$};
 \draw[->,shorten <=2pt,shorten >=2pt,transform canvas={yshift=-1.5pt}]
 (1) to (2);
 \draw[->,shorten <=2pt,shorten >=2pt,transform canvas={yshift=1.5pt}]
 (2) to (1);
 \draw[->,shorten <=2pt,shorten >=2pt,transform canvas={yshift=-1.5pt}]
 (2) to (i);
 \draw[->,shorten <=2pt,shorten >=2pt,transform canvas={yshift=1.5pt}]
 (i) to (2);
 \draw[->,shorten <=2pt,shorten >=2pt,transform canvas={yshift=-1.5pt}]
 (i) to (n-1);
 \draw[->,shorten <=2pt,shorten >=2pt,transform canvas={yshift=1.5pt}]
 (n-1) to (i);
 \draw[->,shorten <=2pt,transform canvas={yshift=1.75pt}]
 (+) to (n-1);
 \draw[->,shorten <=2pt,transform canvas={yshift=-1.75pt}]
 (n-1) to (+);
 \draw[->,shorten <=2pt,transform canvas={yshift=1.75pt}]
 (n-1) to (-);
 \draw[->,shorten <=2pt,transform canvas={yshift=-1.75pt}]
 (-) to (n-1);
 \draw[shorten <=1.5pt,dashed]
 (-) to (+);
 \draw[every loop/.style={},dashed]
 (1) edge  [looseness=2.5,in=240,out=300,loop] (1);
 \draw[every loop/.style={},dashed]
 (2) edge  [looseness=2.5,in=240,out=300,loop] (2);
 \draw[every loop/.style={},dashed]
 (n-1) edge  [looseness=2.5,in=240,out=300,loop] (n-1);
\end{tikzpicture}
\caption{The stable AR quiver of $A_{2\ell-1}$.}
\label{fig:A_2l-1}
\end{figure}
Then the maximal Cohen-Macaulay $R_{2\ell-1 }$-modules 
$N_{\pm}:=\bbC[[x, y]]/\left( x \pm y^{\ell} \right)$
are $2$-periodic tilting objects of $\sCM(R_{2\ell-1})$, but the endomorphism algebras
of $N_{\pm}$ are both isomorphic to $\bbC[x]/\left(x^{\ell} \right)$,
which has infinite global dimension \cite[Proposition 2.4]{BIKR08}.

For the case of $\ell=2$,
$\sCM(R_3)$ has $3$ indecomposable objects.
On the other hand, $D_2(\bbC[x]/(x^2))$ has more than $4$ indecomposable objects,
namely, stalk complexes $\bbC[x]/(x^2), (x)/(x^2) \in D_2(\bbC[x]/(x^2))$ and its shifts.
Thus $\sCM(R_3)$ and $D_2(\bbC[x]/(x^2))$ are not equivalent as additive categories.
In particular,
the Laurent polynomial ring $(\bbC[x]/(x^2))[t,t^{-1}]$ with $\deg (t)=2$
is not intrinsically formal.
\end{ex}



\begin{thebibliography}{NNNN00}
\bibitem[AET21]{AET21}
 T.\ Adachi, H.\ Enomoto, M.\ Tsukamoto,
 \emph{Intervals of $s$-torsion pairs in extriangulated categories with negative first extensions},
 preprint, arXiv:2103.09549.

\bibitem[BIKR08]{BIKR08}
 I.\ Burban, O.\ Iyama, B.\ Keller, I.\ Reiten,
 \emph{Cluster tilting for one-dimensional hypersurface singularities},
 Adv.\ Math., \textbf{217} (2008), no.\ 6, 2443--2484.

\bibitem[Br13]{Br13}
 T.~Bridgeland,
 \emph{Quantum groups via Hall algebras of complexes},
 Ann.\ Math., \textbf{177} (2013), 739--759.
%

\bibitem[Dr04]{Dr04}
 V.\ Drinfeld,
 \emph{DG quotients of DG categories},
 J.\ Alg., \textbf{272} (2004), no.\ 2, 643--691.

\bibitem[EH99]{EH99}
 K.\ Erdmann, T.\ Holm,
 \emph{Twisted bimodules and Hochschild cohomology for self-injective algebras of class $A_n$},
 Forum Math. \textbf{11} (1999), no.\ 2, 177--201.
   
\bibitem[Fu12]{Fu12}
 C.\ Fu,
 \emph{On root categories of finite-dimensional algebras},
 J.\ Alg., \textbf{370} (2012), 233--265.

\bibitem[Go13]{Go13}
 M.\ Gorsky,
 \emph{Semi-derived Hall algebras and tilting invariance of Bridgeland--Hall algebras}, 
 preprint, arXiv:1303.5879. 

\bibitem[Go18]{Go18}
 M.\ Gorsky,
 \emph{Semi-derived and derived Hall algebras for stable categories},
 Int.\ Math.\ Res.\ Not., \textbf{2018}  (2018), no.\ 1, 138--159.

\bibitem[HK19]{HK19}
 A.\ Hochenegger, A.\ Krug,
 \emph{Formality of $\mathbb{P}$-objects},
 Compos.\ Math., \textbf{155} (2019), no.\ 5, 973--994.

\bibitem[Kel94]{Kel94}
 B.~Keller,
 \emph{Deriving {DG} categories},
 Ann.\ Sci.\ \'{E}cole Norm.\ Sup.\ (4) \textbf{27} (1994), no.\ 1, 63--102.

\bibitem[Kel05]{Kel05}
 B.~Keller,
 \emph{On triangulated orbit categories},
 Doc.\ Math., \textbf{10} (2005), 551--581. 

\bibitem[Kn87]{Kn87}
 H.\ Kn\"{o}rrer,
 \emph{Cohen-Macaulay modules on hypersurface singularities. I},
 Invent.\ Math.\ \textbf{88} (1987), no.\ 1, 153--164.

\bibitem[PX97]{PX97}
 L.\ Peng, J.\ Xiao,
 \emph{Root categories and simple Lie algebras},
 J.\ Alg., \textbf{198}, (1997), no.\ 1, 19--56.

\bibitem[PX00]{PX00}
 L.\ Peng, J.\ Xiao,
 \emph{Triangulated categories and Kac-Moody algebras},
 Invent.\ Math., \textbf{140} (2000), no.\ 3, 563--603.

\bibitem[RR20]{RR20}
 M.L.\ Reyes, D.Rogalski,
 \emph{Graded twisted Calabi-Yau algebras are generalized Artin-Schelter regular}, 
 preprint, 2020, arXiv:1807.10249v2.

\bibitem[RW11]{RW11}
 C.\ Roitzheim, S.\ Whitehouse,
 \emph{Uniqueness of $A_{\infty}$-structures and Hochschild cohomology},
 Alg.\ Geom.\ Top., \textbf{11} (2011), no.\ 1, 107--143.

\bibitem[SY11]{SY11}
 A.\ Skowroński, K.\ Yamagata,
 \emph{Frobenius algebras. I. Basic representation theory.},
 EMS Textbooks in Math., (2011),
 European Mathematical Society

\bibitem[St18]{St18}
 T.\ Stai,
 \emph{The triangulated hull of periodic complexes},
 Math.\ Res.\ Lett.\ \textbf{25} (2018), no.\ 1, 199--236.

\bibitem[Tak10]{Tak10}
 R. Takahashi,
 \emph{Classifying thick subcategories of the stable category of Cohen-Macaulay modules}
 Adv.\ Math.\ \textbf{225} (2010), no.\ 4, 2076--2116.

\bibitem[Tilt07]{Tilt07}
 L.A.\ H\"ugel, D.\ Happel, H.\ Krause ed.,
 \emph{Handbook of Tilting Theory},
 London Math.\ Soc.\ Lect.\ Note Ser., \textbf{332} (2007), 
 Cambridge University Press.

\bibitem[W94]{W94}
 C.A.\ Weibel,
 \emph{An Introduction to Homological Algebra},
 Cambridge Stud.\ Adv.\ Math., \textbf{38} (1994),
 Cambridge University Press.

\bibitem[Yo90]{Yo90}
 Y.\ Yoshino,
 \emph{Cohen-Macaulay Modules over Cohen-Macaulay Rings},
 London Math.\ Soc.\ Lect.\ Note Ser., \textbf{146} (1990), 
 Cambridge University Press.

\bibitem[Z14]{Zhao14}
 X.\ Zhao,
 \emph{A note on the equivalence of m-periodic derived categories},
 Sc.\ China Math., \textbf{57} (2014), no.\ 11, 2329--2334.
\end{thebibliography}
\end{document}